  \newcounter{mnote}
  \let\oldmarginpar\marginpar
    \renewcommand\marginpar[1]{\-\oldmarginpar[\raggedleft\footnotesize #1]%
    {\raggedright\footnotesize #1}}
\newtheorem{theorem}{Theorem}[section]
\newtheorem{lemma}[theorem]{Lemma}
\newtheorem{corollary}[theorem]{Corollary}
\newtheorem{remark}[theorem]{Remark}
\newcommand{\dd}{\,{\rm d}}
\newcommand{\bs}{\boldsymbol}
\newcommand{\curl}{\operatorname{curl}}
\renewcommand{\div}{\operatorname{div}}
\newcommand{\grad}{\operatorname{grad}}
\numberwithin{equation}{section}
\begin{document}
\title[Nonconforming Stokes complexes in three dimensions]{Nonconforming finite element Stokes complexes in three dimensions}
\author{Xuehai Huang}%
\address{School of Mathematics, Shanghai University of Finance and Economics, Shanghai 200433, China
}%
\email{huang.xuehai@sufe.edu.cn}%

\thanks{This work was supported by the National Natural Science Foundation of China Projects 12171300 and 11771338, the Natural Science Foundation of Shanghai 21ZR1480500, and the Fundamental Research Funds for the Central Universities 2019110066.}
\thanks{This paper has been accepted for publication in SCIENCE CHINA Mathematics.}

\subjclass[2010]{
65N30;   
65N12;   
65N22;   
}

\begin{abstract}
Two nonconforming finite element Stokes complexes starting from the conforming Lagrange element and ending with the nonconforming $P_1$-$P_0$ element for the Stokes equation in three dimensions are studied. And commutative diagrams are also shown by combining nonconforming finite element Stokes complexes and interpolation operators. The lower order $\boldsymbol H(\textrm{grad}\textrm{curl})$-nonconforming finite element only has $14$ degrees of freedom, whose basis functions are explicitly given in terms of the barycentric coordinates. The $\boldsymbol H(\textrm{grad}\textrm{curl})$-nonconforming elements are applied to solve the quad-curl problem, and optimal convergence is derived. By the nonconforming finite element Stokes complexes, the mixed finite element methods of the quad-curl problem are decoupled into two mixed methods of the Maxwell equation and the nonconforming $P_1$-$P_0$ element method for the Stokes equation, based on which a fast solver is discussed. Numerical results are provided to verify the theoretical convergence rates.   
\end{abstract}

\keywords{Nonconforming finite element Stokes complex, quad-curl problem, error analysis, decoupling, fast solver}

\maketitle


\section{Introduction}
In this paper we shall consider nonconforming finite element discretization of the following Stokes complex in three dimensions
\begin{equation}\label{eq:Stokescomplex3d}
\mathbb R\xrightarrow{\subset} H^1(\Omega)\xrightarrow{\nabla} \bs H(\grad\curl, \Omega) \xrightarrow{\curl} \boldsymbol H^1(\Omega; \mathbb{R}^3) \xrightarrow{\div} L^{2}(\Omega)\xrightarrow{}0,
\end{equation}
where $\bs H(\grad\curl, \Omega):=\{\bs v\in\bs H(\curl, \Omega): \curl\bs v\in \boldsymbol H^1(\Omega; \mathbb{R}^3)\}$.
Conforming finite element Stokes complexes on triangles and rectangles in two dimensions are devised in \cite{HuZhangZhang2020,ZhangWangZhang2019}.
And conforming finite element Stokes complexes on Alfeld split meshes in three dimensions are advanced in \cite{HuZhangZhang2020curcurl3d}.
We refer to \cite{BeiraodaVeigaDassiVacca2020} for a conforming virtual element discretization of the Stokes complex \eqref{eq:Stokescomplex3d}.
To the best of our knowledge, there is no finite element discretization of the Stokes complex \eqref{eq:Stokescomplex3d} in three dimensions using pure polynomials as shape functions in literature. Recently $\bs H(\grad\curl)$-conforming finite elements in three dimensions are constructed with $k\geq6$ in \cite{ZhangZhang2020}, whose space of shape functions includes all the polynomials with degree no more than $k$.
The number of the degrees of freedom for the lowest-order element in \cite{ZhangZhang2020} is $315$, which is reduced to $18$ by enriching the shape function space with macro-element bubble functions in \cite{HuZhangZhang2020curcurl3d}.
Nonconforming elements to discretize $\bs H(\grad\curl, \Omega)$ are another choices to reduce the large dimension of the conforming element spaces.
The $\bs H(\grad\curl)$-nonconforming Zheng-Hu-Xu element in \cite{ZhengHuXu2011} has only $20$ degrees of freedom, which is the first $\bs H(\grad\curl)$-nonconforming finite element.

We will construct an $\bs H(\grad\curl)$-nonconforming finite element possessing fewer degrees of freedom than the Zheng-Hu-Xu element, but preserving the same approximation error in energy norm. 
The finite element discretization of $\boldsymbol H^1(\Omega; \mathbb{R}^3) \times L^{2}(\Omega)$ in the Stokes complex \eqref{eq:Stokescomplex3d} should be a stable divergence-free pair for the Stokes equation, which suggests us to use the nonconforming linear element and piecewise constant to discretize $\boldsymbol H^1(\Omega; \mathbb{R}^3)$ and $L^{2}(\Omega)$ respectively.  
On the other hand, the direct sum decomposition $\mathbb P_{k}(K;\mathbb R^3)=\nabla\mathbb P_{k+1}(K) \oplus \big((\bs x-\bs x_K)\times\mathbb P_{k-1}(K;\mathbb R^3)\big)$~\cite{Arnold2018,ArnoldFalkWinther2006} implies the curl operator $\curl: (\bs x-\bs x_K)\times\mathbb P_{1}(K;\mathbb R^3) \to \mathbb P_{1}(K;\mathbb R^3)$ is injective. This motivates us to take the space of shape functions
$\bs W_k(K)=\nabla\mathbb P_{k+1}(K) \oplus \big((\bs x-\bs x_K)\times\mathbb P_{1}(K;\mathbb R^3)\big)$ with $k=0,1$.
Note that $\bs W_1(K)$ is exactly the space of shape functions of the Zheng-Hu-Xu element, hence we give a new understanding of the Zheng-Hu-Xu element by the space decomposition.
The dimension of $\bs W_0(K)$ is $14$, which is six fewer than the dimension of $\bs W_1(K)$.
The degrees of freedom $\mathcal N_0(K)$ for $\bs W_0(K)$ are given by
\begin{align*}
\int_e\bs v\cdot\bs t_e \dd s & \quad  \textrm{ on each }  e\in\mathcal E(K), \\
\int_F(\curl\bs v)\times\bs n\dd s & \quad \textrm{ on each }  F\in\mathcal F(K). 
\end{align*}
By comparing the degrees of freedom,
the lower order nonconforming element $(K, \bs W_0(K), \mathcal N_0(K))$ for $\bs H(\grad\curl, \Omega)$ is very similar as the Morley-Wang-Xu element \cite{WangXu2006} for $H^2(\Omega)$.
The explicit expressions of the basis functions of $\bs W_0(K)$ are shown in terms of the barycentric coordinates.

Then we combine the conforming $(k+1)$th order Lagrange element space $V_{h0}^g$, the $\bs H(\grad\curl)$-nonconforming finite element space $\boldsymbol W_{h0}$ including the Zheng-Hu-Xu element and the lower order one constructed in this paper, the nonconforming linear element space $\boldsymbol V_{h0}^s$, and the piecewise constant space $\mathcal Q_{h0}$ to build up the nonconforming finite element Stokes complexes
\begin{equation}\label{intro:Stokescomplex3dncfem0}
0\xrightarrow{\subset} V_{h0}^g\xrightarrow{\nabla} \bs W_{h0} \xrightarrow{\curl_h} \bs V_{h0}^s \xrightarrow{\div_h}  \mathcal Q_{h0}\xrightarrow{}0.
\end{equation}
The divergence-free subspace of the nonconforming linear element space $\boldsymbol V_{h0}^s$ is explicitly characterized due to this nonconforming finite element Stokes complex, which essentially extends the result of Falk and Morley \cite{FalkMorley1990} to three dimensions.
Recently this nonconforming finite element Stokes complex is applied to prove the quasi-orthogonality of the adaptive finite element method for the quad-curl problem in \cite{CaoChenHuang2020}. 
Furthermore, we develop the commutative diagram for Stokes complex \eqref{eq:Stokescomplex3d}
\begin{equation*}
\resizebox{0.915\hsize}{!}{$
\begin{array}{c}
\xymatrix{
 0 \ar[r]^-{\subset} & H_0^1(\Omega)\ar[d]^{I_h^{SZ}} \ar[r]^-{\nabla} & \boldsymbol H_0(\grad\curl, \Omega) \ar[d]^{\boldsymbol{\Pi}_h^{gc}} \ar[r]^-{\curl}
                & \boldsymbol{H}_0^1(\Omega; \mathbb{R}^3) \ar[d]^{\boldsymbol{I}_h^s}   \ar[r]^-{\div} & \ar[d]^{I_h^{L^2}}L_0^2(\Omega) \ar[r]^{} & 0 \\
 0 \ar[r]^-{\subset} & V_{h0}^g\ar[r]^-{\nabla} & \boldsymbol W_{h0} \ar[r]^{\curl_h}
                &  \boldsymbol V_{h0}^s   \ar[r]^{\div_h} &  \mathcal Q_{h0} \ar[r]^{}& 0    }
\end{array},
$}
\end{equation*}
where $I_h^{SZ}$ is the Scott-Zhang interpolation operator \cite{ScottZhang1990}, $\boldsymbol{\Pi}_h^{gc}$ is a quasi-interpolation operator, and both $\boldsymbol{I}_h^s$ and $I_h^{L^2}$ are the standard interpolation operators based on the degrees of freedom.

The $\bs H(\grad\curl)$-nonconforming element together with the Lagrange element is then applied to solve the quad-curl problem. The discrete Poincar\'e inequality is established for the $\bs H(\grad\curl)$-nonconforming element space $\boldsymbol W_{h0}$, as a result the coercivity on the weak divergence-free space follows. Then we acquire the discrete stability of the bilinear form from the evident discrete inf-sup condition, and derive the optimal convergence of the nonconforming mixed finite methods. 
Since the interpolation operator $\boldsymbol I_h^{gc}$ is not well-defined on $\bs H_0(\grad\curl, \Omega)$,
in the error analysis we exploit a quasi-interpolation operator $\boldsymbol{\Pi}_h^{gc}$ defined on $\bs H_0(\grad\curl, \Omega)$, which is constructed by combining a regular decomposition for the space $\bs H_0(\grad\curl, \Omega)$, the interpolation operator $\boldsymbol I_h^{gc}$ and the Scott-Zhang interpolation operator \cite{ScottZhang1990}.

By the nonconforming finite element Stokes complex \eqref{intro:Stokescomplex3dncfem0}, 
we equivalently decouple the mixed finite element methods of the quad-curl problem into two mixed methods of the Maxwell equation and the nonconforming $P_1$-$P_0$ element method for the Stokes equation, 
as the decoupling of the quad-curl problem  in the continuous level \cite{ChenHuang2018,Zhang2018a}. 
A fast solver based on this equivalent decoupling is discussed for the mixed finite element methods of the quad-curl problem.    

In addition to the Stokes complex \eqref{eq:Stokescomplex3d}, another kind of Stokes complex~\cite{JohnLinkeMerdonNeilanEtAl2017} is 
\begin{equation}\label{eq:anotherStokescomplex2d}
\mathbb R\xrightarrow{\subset} H^2(\Omega) \xrightarrow{\curl} \boldsymbol H^1(\Omega; \mathbb{R}^2) \xrightarrow{\div} L^{2}(\Omega)\xrightarrow{}0
\end{equation}
in two dimensions,
and
\begin{equation}\label{eq:anotherStokescomplex3d}
\mathbb R\xrightarrow{\subset} H^2(\Omega)\xrightarrow{\nabla} \bs H^1(\curl, \Omega) \xrightarrow{\curl} \boldsymbol H^1(\Omega; \mathbb{R}^3) \xrightarrow{\div} L^{2}(\Omega)\xrightarrow{}0
\end{equation}
in three dimensions,
where $\bs H^1(\curl, \Omega):=\{\bs v\in\boldsymbol H^1(\Omega; \mathbb{R}^3): \curl\bs v\in \boldsymbol H^1(\Omega; \mathbb{R}^3)\}$.
We refer to \cite{FalkMorley1990,MardalTaiWinther2002,AustinManteuffelMcCormick2004,Lee2010,FalkNeilan2013,GuzmanNeilan2012,GuzmanNeilan2014,Zhang2016,GuzmanLischkeNeilan2020} for some finite element discretizations of the Stokes complex \eqref{eq:anotherStokescomplex2d} in two dimensions,
and \cite{TaiWinther2006,GuzmanNeilan2012,Neilan2015} for some finite element discretizations of the Stokes complex \eqref{eq:anotherStokescomplex3d} in three dimensions.
While the finite elements corresponding to the Stokes complexes \eqref{eq:anotherStokescomplex2d}-\eqref{eq:anotherStokescomplex3d} are not suitable to discretize the quad-curl problem, since $\nabla H^1(\Omega)\subset\bs H(\grad\curl, \Omega)$ is not a subspace of $\bs H^1(\curl, \Omega)$.

The rest of this paper is organized as follows. In Section \ref{sec:gradcurlfem}, we devise a lower order $\bs H(\grad\curl)$-nonconforming finite element.
Nonconforming finite element Stokes complexes are developed in Section~\ref{sec:femstokescomplex}.
In Section~\ref{sec:mfem}, we propose the nonconforming mixed finite element methods for the quad-$\curl$ problem.
And the decoupling of the mixed finite element methods and a fast solver are discussed in Section \ref{sec:decoupling}.
Finally numerical results are presented in Section \ref{sec:numerresults}.

\section{The $\bs H(\grad\curl)$-nonconforming finite elements}\label{sec:gradcurlfem}

In this section we will present $\bs H(\grad\curl)$-nonconforming finite elements.


\subsection{Notation}

Given a bounded domain $G\subset\mathbb R^3$ and a nonnegative integer
$m$, let $H^m(G)$ be the usual Sobolev space of functions on $G$, and $\bs H^m(G;\mathbb R^3)$ the vector version of $H^m(G)$.
The corresponding norm and semi-norm are denoted, respectively, by $\|\cdot\|_{m,G}$ and $|\cdot|_{m,G}$. Let $(\cdot, \cdot)_G$ be the standard inner product on $L^2(G)$ or $\bs L^2(G;\mathbb R^3)$. If $G$ is $\Omega$, we abbreviate $\|\cdot\|_{m,G}$, $|\cdot|_{m,G}$ and $(\cdot, \cdot)_G$ by $\|\cdot\|_{m}$, $|\cdot|_{m}$ and $(\cdot, \cdot)$, respectively. Denote by $H_0^m(G) (\bs H_0^m(G; \mathbb R^3))$ the closure of $C_0^{\infty}(G) (\bs C_0^{\infty}(G; \mathbb R^3))$ with respect to the norm $\|\cdot\|_{m,G}$. 
Let $\mathbb P_m(G)$ stand for the set of all polynomials in $G$ with the total degree no more than $m$, and $\mathbb P_m(G;\mathbb R^3)$ be the vector version of $\mathbb P_m(G)$.
Let $Q_G^{m}: L^2(G)\to\mathbb P_{m}(G)$ be the $L^2$-orthogonal projector, and its vector version is denoted by $\bs Q_G^{m}$. Set $Q_G:=Q_G^0$.
The gradient operator,  curl operator and divergence  operator are denoted by $\nabla$, $\curl$ and $\div$ respectively. And define Sobolev spaces $\bs H(\curl, G)$, $\bs H_0(\curl, G)$, $\bs H(\div, G)$, $\bs H_0(\div, G)$ and $L_0^2(G)$ in the standard way.

Assume $\Omega\subset\mathbb R^3$ is a contractible polyhedron.
Let $\{\mathcal {T}_h\}_{h>0}$ be a regular family of tetrahedral meshes
of $\Omega$.  For each element $K\in\mathcal{T}_h$, denote by $\boldsymbol{n}_K $ the
unit outward normal vector to $\partial K$,  which will be abbreviated as $\boldsymbol{n}$ for simplicity.
Let $\mathcal{F}_h$, $\mathcal{F}^i_h$, $\mathcal{E}_h$ and $\mathcal{V}_h$ be the union of all faces, interior faces, edges and vertices
of the partition $\mathcal {T}_h$, respectively.
We fix a unit normal vector $\boldsymbol{n}_F$ for each face $F\in\mathcal{F}_h$, and a unit tangent vector $\boldsymbol{t}_e$ for each edge $e\in\mathcal{E}_h$.
For any $K\in\mathcal T_h$, denote by $\mathcal{F}(K)$, $\mathcal{E}(K)$ and $\mathcal{V}(K)$ the set of all faces, edges and vertices of $K$, respectively. 
For any $F\in\mathcal{F}_h$, let $\mathcal{E}(F)$ be the set of all edges of $F$. And for each $e\in\mathcal{E}(F)$, denote by $\bs n_{F,e}$ the unit vector
being parallel to $F$ and outward normal to $\partial F$. Set $\bs t_{F,e}:=\bs n_F\times\bs n_{F,e}$, where $\times$ is the exterior product.
For elementwise smooth function $\bs v$, define
\[
\|\bs v\|_{1,h}^2:=\sum_{K\in\mathcal T_h}\|\bs v\|_{1,K}^2,\quad |\bs v|_{1,h}^2:=\sum_{K\in\mathcal T_h}|\bs v|_{1,K}^2.
\]
Let $\nabla_h$, $\curl_h$ and $\div_h$ be the elementwise version of $\nabla$, $\curl$ and $\div$ with respect to $\mathcal T_h$.

\subsection{Nonconforming finite elements}
We focus on constructing nonconforming finite elements for the space $\bs H(\grad\curl, \Omega)$ in this subsection. 
To this end, recall the direct sum of the polynomial space \cite{Arnold2018,ArnoldFalkWinther2006}
\begin{equation}\label{eq:polyspacedecomposition}
\mathbb P_{k}(K;\mathbb R^3)=\nabla\mathbb P_{k+1}(K) \oplus \big((\bs x-\bs x_K)\times\mathbb P_{k-1}(K;\mathbb R^3)\big) \quad \forall~K\in\mathcal T_h,
\end{equation}
where $\bs x_K$ is the barycenter of $K$.
The decomposition \eqref{eq:polyspacedecomposition} implies that $\curl: (\bs x-\bs x_K)\times\mathbb P_{k-1}(K;\mathbb R^3)\to\mathbb P_{k-1}(K;\mathbb R^3)$ is injective. 
We intend to use the nonconforming linear element to discretize $\boldsymbol H^1(\Omega; \mathbb{R}^3)$, then the  decomposition \eqref{eq:polyspacedecomposition} and the complex \eqref{eq:Stokescomplex3d} motivate us that the space of shape functions to discrete $\bs H(\grad\curl, \Omega)$ should include $(\bs x-\bs x_K)\times\mathbb P_{1}(K;\mathbb R^3)$. The direct sum in \eqref{eq:polyspacedecomposition} also suggests to enrich $(\bs x-\bs x_K)\times\mathbb P_{1}(K;\mathbb R^3)$ with $\nabla\mathbb P_{l}(K)$ for some positive integer $l$ to get the space of shape functions.
Hence
for each $K\in\mathcal T_h$, define the space of shape functions as
$$
\bs W_k(K):=\nabla\mathbb P_{k+1}(K) \oplus \big((\bs x-\bs x_K)\times\mathbb P_{1}(K;\mathbb R^3)\big) \quad\textrm{ for } k=0,1.
$$
By the decomposition \eqref{eq:polyspacedecomposition}, we have $\mathbb P_k(K;\mathbb R^3)\subset \bs W_k(K)\subset \mathbb P_2(K;\mathbb R^3)$,
and
\[
\dim\bs W_k(K)=\begin{cases}
14, & k=0,\\
20, & k=1.
\end{cases}
\]
Then choose the following local degrees of freedom $\mathcal N_k(K)$
\begin{align}
\int_e\bs v\cdot\bs t_e q\dd s & \quad \forall~q\in \mathbb P_k(e) \textrm{ on each }  e\in\mathcal E(K), \label{gradcurlncfmdof1}\\
\int_F(\curl\bs v)\times\bs n\dd s & \quad \textrm{ on each }  F\in\mathcal F(K). \label{gradcurlncfmdof2}
\end{align}
The degrees of freedom \eqref{gradcurlncfmdof1}-\eqref{gradcurlncfmdof2} are inspired by the degrees of freedom of nonconforming linear element and the N\'ed\'elec element \cite{Nedelec1980,Nedelec1986}.
Note that the triple $(K, \bs W_1(K), \mathcal N_1(K))$ is exactly the nonconforming finite element in \cite{ZhengHuXu2011}.
In this paper we will embed this nonconforming finite element into the discrete Stokes complex. And we also construct the lowest order triple $(K, \bs W_0(K), \mathcal N_0(K))$.

\begin{lemma}\label{lem:Whunisol}
The degrees of freedom \eqref{gradcurlncfmdof1}-\eqref{gradcurlncfmdof2} are unisolvent for the shape function space $\bs W_k(K)$.
\end{lemma}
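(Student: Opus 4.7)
My plan is to verify unisolvence first by matching the dimension count and then by showing that $\bs v\in\bs W_k(K)$ with all functionals in $\mathcal N_k(K)$ vanishing must itself be zero. The count is immediate: the six edges contribute $6(k+1)$ scalars from \eqref{gradcurlncfmdof1}, and since $(\curl\bs v)\times\bs n$ lies in the tangent plane of $F$, each of the four faces contributes $2$ scalars from \eqref{gradcurlncfmdof2}, giving $6(k+1)+8$, which matches $\dim\bs W_0(K)=14$ and $\dim\bs W_1(K)=20$.

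For the kernel argument I would first show $\curl\bs v=0$. Stokes' theorem on each face $F$ rewrites $\int_F(\curl\bs v)\cdot\bs n\dd s$ as a signed sum of the three edge integrals $\int_e\bs v\cdot\bs t_e\dd s$; by the zeroth moment in \eqref{gradcurlncfmdof1} this sum vanishes, so the normal part of $\int_F\curl\bs v\dd s$ is zero. The face DOF \eqref{gradcurlncfmdof2} annihilates the tangential part, so $\int_F\curl\bs v\dd s=\bs 0$ on all four faces. Since $\bs W_k(K)\subset\mathbb P_2(K;\mathbb R^3)$, the field $\curl\bs v$ is linear, and the centroid rule gives $\int_F\curl\bs v\dd s=|F|(\curl\bs v)(\bs x_F)$. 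The four face centroids of a tetrahedron are affinely independent, so a linear vector field vanishing at them must vanish identically, yielding $\curl\bs v\equiv 0$.

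To finish I would invoke the direct-sum decomposition \eqref{eq:polyspacedecomposition} and the injectivity of $\curl$ on $(\bs x-\bs x_K)\times\mathbb P_1(K;\mathbb R^3)$ to conclude $\bs v=\nabla p$ for some $p\in\mathbb P_{k+1}(K)$. The edge DOFs now read $\int_e\partial_{\bs t_e}p\, q\dd s=0$ for all $q\in\mathbb P_k(e)$, and since $\partial_{\bs t_e}p|_e\in\mathbb P_k(e)$, $L^2(e)$-orthogonality forces $\partial_{\bs t_e}p|_e=0$, so $p$ is constant on every edge of $K$. For $k=0$ a linear polynomial constant on all edges is globally constant. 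For $k=1$ a short barycentric-coordinate expansion $p-p(v_0)=\sum_{i<j}d_{ij}\lambda_i\lambda_j$, restricted to each edge $e_{ij}$ (where only $\lambda_i$ and $\lambda_j$ survive), forces $d_{ij}=0$ for every pair, so $p$ is again constant, and $\bs v=\nabla p=\bs 0$.

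The main obstacle is the curl-killing step: one has to carefully split $\int_F\curl\bs v\dd s$ into normal and tangential parts to see that the edge and face DOFs together annihilate it, and then exploit the low degree of $\curl\bs v$ to promote four vector-valued face integrals to pointwise vanishing. The subsequent gradient-killing step is mostly bookkeeping via \eqref{eq:polyspacedecomposition}, with only a routine barycentric check needed in the $k=1$ case.
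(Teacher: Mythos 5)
Your proof is correct and follows essentially the same route as the paper: dimension count, then curl-killing via Stokes plus the face moments (your centroid-rule/affine-independence argument is just a direct proof of the nonconforming $P_1$ unisolvence the paper cites), and finally the gradient-killing step via the decomposition \eqref{eq:polyspacedecomposition} and vanishing tangential derivatives on edges. The extra barycentric bookkeeping for $k=1$ only makes explicit what the paper leaves terse.
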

\begin{proof}
Notice that the number of the degrees of freedom \eqref{gradcurlncfmdof1}-\eqref{gradcurlncfmdof2} is same as the dimension of $\bs W_k(K)$.  It is sufficient to show that $\bs v=\bs0$ for any $\bs v\in\bs W_k(K)$ with vanishing degrees of freedom \eqref{gradcurlncfmdof1}-\eqref{gradcurlncfmdof2}.

For each $F\in\mathcal F(K)$, apply the integration by parts on face $F$ to obtain
\begin{align}
\int_F(\curl\bs v)\cdot\bs n_{F}\dd s&=\int_F\div(\bs v\times \bs n_F)\dd s=\sum_{e\in\mathcal E(F)}\int_e(\bs v\times \bs n_F)|_F\cdot\bs n_{F,e}\dd s \notag\\
&=\sum_{e\in\mathcal E(F)}\int_e\bs v\cdot( \bs n_F\times\bs n_{F,e})\dd s=\sum_{e\in\mathcal E(F)}\int_e\bs v\cdot\bs t_{F,e}\dd s. \label{eq:20220207}
\end{align}
We get from the vanishing degrees of freedom \eqref{gradcurlncfmdof1} that $\int_F(\curl\bs v)\cdot\bs n_{F}\dd s=0$,
which together with the vanishing degrees of freedom \eqref{gradcurlncfmdof2} implies
\[
\int_F\curl\bs v\dd s=\bs0.
\]
Since $\curl\bs v\subseteq \mathbb P_{1}(K;\mathbb R^3)$, we acquire from the unisolvence of the nonconforming linear element that $\curl\bs v=\bs 0$.
Employing the fact that $\curl: (\bs x-\bs x_K)\times\mathbb P_{1}(K;\mathbb R^3)\to\mathbb P_{1}(K;\mathbb R^3)$ is injective, there exists $q\in \mathbb P_{k+1}(K)$ such that $\bs v=\nabla q$.
By the vanishing degrees of freedom \eqref{gradcurlncfmdof1}, it holds $\partial_{t_e}q=0$, which implies that we can choose $q\in \mathbb P_{k+1}(K)$ such that $q|_{e}=0$ for each $e\in \mathcal E(K)$.
Noting that $k=0,1$, we acquire $q=0$ and $\bs v=\bs0$.
\end{proof}

By comparing the degrees of freedom,
the lower order nonconforming element $(K, \bs W_0(K), \mathcal N_0(K))$ for $\bs H(\grad\curl, \Omega)$ is very similar as the Morley-Wang-Xu element \cite{WangXu2006} for $H^2(\Omega)$.

Next we give a norm equivalence of space $\bs W_k(K)$. To this end,
recall the Poincar\'e operator $\mathcal K_K: \mathbb P_{1}(K;\mathbb R^3)\to (\bs x-\bs x_K)\times\mathbb P_{1}(K;\mathbb R^3)$ in \cite{GopalakrishnanDemkowicz2004,Hiptmair1999}
$$
\mathcal K_K\boldsymbol q:=-(\boldsymbol{x}-\boldsymbol{x}_K)\times\int_0^1t\,\boldsymbol q(t(\boldsymbol{x}-\boldsymbol{x}_K)+\boldsymbol{x}_K)\dd t.
$$
It holds the identity \cite[Theorem~2.1]{GopalakrishnanDemkowicz2004}
\begin{equation}\label{eq:poincareoperatorprop1}
\curl\mathcal K_K(\curl\boldsymbol v)=\curl\boldsymbol v\quad\forall~\boldsymbol v\in \bs W_k(K).
\end{equation}
By the inverse inequality, we have
\begin{equation}\label{eq:poincareoperatorprop2}
\|\mathcal K_K\boldsymbol q\|_{0,K}\lesssim h_K^{5/2}\|\boldsymbol q\|_{L^{\infty}(K)}\lesssim h_K\|\boldsymbol q\|_{0,K}\quad\forall~\boldsymbol q\in\mathbb P_{1}(K;\mathbb R^3).
\end{equation}

\begin{lemma}\label{lem:WkKdecomp}
For $\boldsymbol v\in\bs W_k(K)$, there exists $q\in\mathbb P_{k+1}(K)$ such that
\begin{equation}\label{eq:WkKdecomp}
\boldsymbol v= \nabla q + \mathcal K_K(\curl\boldsymbol v),
\end{equation}
\begin{equation}\label{eq:WkKdecompprop}
\|q\|_{0,K}^2\lesssim h_K^4\|\curl\boldsymbol v\|_{0,K}^2+h_K^4\sum_{e\in\mathcal E(K)}\|Q_e^k(\boldsymbol v\cdot\boldsymbol t_e)\|_{0,e}^2.
\end{equation}
\end{lemma}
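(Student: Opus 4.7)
My plan falls into two parts. For the identity \eqref{eq:WkKdecomp}, I use the direct-sum definition of $\bs W_k(K)$ to write $\bs v = \nabla q + \bs w$ with $q \in \mathbb P_{k+1}(K)$ and $\bs w \in (\bs x - \bs x_K) \times \mathbb P_1(K;\mathbb R^3)$. Since $\mathcal K_K(\curl \bs v)$ also lies in $(\bs x - \bs x_K) \times \mathbb P_1(K;\mathbb R^3)$ by construction, and since $\curl \mathcal K_K(\curl \bs v) = \curl \bs v = \curl \bs w$ by \eqref{eq:poincareoperatorprop1}, the injectivity of $\curl$ on $(\bs x - \bs x_K) \times \mathbb P_1(K;\mathbb R^3)$ (recorded right after \eqref{eq:polyspacedecomposition}) forces $\bs w = \mathcal K_K(\curl \bs v)$. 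The remaining additive constant in $q$ is free; I would fix it by requiring $q(\bs x_K) = 0$.

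For the estimate \eqref{eq:WkKdecompprop}, I would pull back to a reference tetrahedron $\hat K$ via a shape-regular affine map $F_K\colon \hat K \to K$ and invoke equivalence of norms on the finite-dimensional space $\bs W_k(\hat K)$. Set $\hat{\bs v}(\hat x) := \bs v(F_K(\hat x))$ and $\hat q(\hat x) := h_K^{-1} q(F_K(\hat x))$, the prefactor $h_K^{-1}$ being chosen precisely so that $\hat\nabla \hat q = (\nabla q)\circ F_K$, under which \eqref{eq:WkKdecomp} transports cleanly to the analogous decomposition on $\hat K$ with $\hat q(\hat{\bs x}_{\hat K}) = 0$. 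Both $\hat{\bs v} \mapsto \|\hat q\|_{0,\hat K}$ and
\[
N(\hat{\bs v}) := \Bigl(\|\hat\curl \hat{\bs v}\|_{0,\hat K}^2 + \sum_{\hat e \in \mathcal E(\hat K)} \|Q_{\hat e}^k(\hat{\bs v} \cdot \hat{\bs t}_{\hat e})\|_{0,\hat e}^2\Bigr)^{1/2}
\]
are seminorms on $\bs W_k(\hat K)$, so once one knows that $N$ is a norm, equivalence of norms yields $\|\hat q\|_{0,\hat K} \lesssim N(\hat{\bs v})$; the elementary scalings $\|q\|_{0,K}^2 \sim h_K^5 \|\hat q\|_{0,\hat K}^2$, $\|\curl\bs v\|_{0,K}^2 \sim h_K \|\hat\curl \hat{\bs v}\|_{0,\hat K}^2$, and $\|Q_e^k(\bs v \cdot \bs t_e)\|_{0,e}^2 \sim h_K \|Q_{\hat e}^k(\hat{\bs v} \cdot \hat{\bs t}_{\hat e})\|_{0,\hat e}^2$ then assemble into \eqref{eq:WkKdecompprop}.

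The one nontrivial step is checking that $N$ is a norm. If $N(\hat{\bs v}) = 0$, then $\hat\curl \hat{\bs v} = 0$, and the vanishing of $Q_{\hat e}^k(\hat{\bs v} \cdot \hat{\bs t}_{\hat e})$ on every edge is exactly the vanishing of the edge degrees of freedom \eqref{gradcurlncfmdof1}. Applying \eqref{eq:WkKdecomp} with $\hat\curl \hat{\bs v} = 0$ gives $\hat{\bs v} = \hat\nabla \hat q$ for some $\hat q \in \mathbb P_{k+1}(\hat K)$, and the edge conditions then force $\partial_{\hat{\bs t}_{\hat e}}\hat q = 0$ on each $\hat e$ (since this tangential derivative already lies in $\mathbb P_k(\hat e)$); for $k \in \{0,1\}$ an elementary barycentric-coordinate argument (exactly as at the end of the proof of Lemma~\ref{lem:Whunisol}) forces $\hat q$ to be constant on $\hat K$, hence $\hat{\bs v} = 0$. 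The only other piece of bookkeeping I would flag is the $h_K^{-1}$ in the rescaling of $q$, which is what converts the naive $h_K^2$ on the right-hand side into the desired $h_K^4$ and reflects the fact that $q$ lives ``one gradient above'' $\bs v$.
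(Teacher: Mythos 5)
Your derivation of the identity \eqref{eq:WkKdecomp} is correct and is essentially the paper's first step: by \eqref{eq:poincareoperatorprop1}, $\bs v-\mathcal K_K(\curl\bs v)$ lies in $\bs W_k(K)\cap\ker(\curl)=\nabla\mathbb P_{k+1}(K)$ (the paper fixes the constant by $q(\delta)=0$ at a vertex, you at the barycenter; this is immaterial). Your verification that the quantity $N$ is a norm on $\bs W_k(\hat K)$ is also fine and mirrors the unisolvence argument of Lemma~\ref{lem:Whunisol}.

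The gap is in the scaling step. You pull back by plain composition, $\hat{\bs v}:=\bs v\circ F_K$ with $\hat q:=h_K^{-1}q\circ F_K$, and assert $\nabla_{\hat x}\hat q=(\nabla q)\circ F_K$ and that \eqref{eq:WkKdecomp} ``transports cleanly''. For a general affine map $F_K(\hat x)=B_K\hat x+b_K$ the chain rule gives $\nabla_{\hat x}(q\circ F_K)=B_K^{T}\,(\nabla q)\circ F_K$, so your identity would require $B_K=h_K I$, i.e.\ a pure dilation; a fixed reference tetrahedron cannot be mapped onto an arbitrary shape-regular $K$ this way. More seriously, plain composition does not map $\bs W_k(K)$ into $\bs W_k(\hat K)$: the image of $(\bs x-\bs x_K)\times\mathbb P_1(K;\mathbb R^3)$ is $B_K^{-T}\big((\hat{\bs x}-\hat{\bs x}_{\hat K})\times\mathbb P_1(\hat K;\mathbb R^3)\big)$, which differs from the second summand of $\bs W_k(\hat K)$ unless $B_K$ is a scalar multiple of an orthogonal matrix, and $\curl$ does not commute with plain composition, so the claimed equivalence $\|\curl\bs v\|_{0,K}^2\sim h_K\|\curl\hat{\bs v}\|_{0,\hat K}^2$ is also unjustified (one only controls the full gradient). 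Consequently the finite-dimensional norm equivalence on $\bs W_k(\hat K)$ cannot be applied to your $\hat{\bs v}$. The repair is the standard covariant (N\'ed\'elec) transform: set $\hat{\bs v}:=B_K^{T}(\bs v\circ F_K)$ and $\hat q:=q\circ F_K$. Using $(B\bs a)\times(B\bs b)=\det(B)\,B^{-T}(\bs a\times\bs b)$ one checks that this transform maps $\bs W_k(K)$ onto $\bs W_k(\hat K)$, that the Poincar\'e operator commutes with it, and that edge tangential moments and $\curl$ transform by fixed per-edge/per-element factors; the reference inequality $\|\hat q\|_{0,\hat K}\lesssim N(\hat{\bs v})$ then scales back with $\|q\|_{0,K}^2\sim h_K^{3}\|\hat q\|_{0,\hat K}^2$, $\|\curl\bs v\|_{0,K}^2\sim h_K^{-1}\|\curl\hat{\bs v}\|_{0,\hat K}^2$ and $\|Q_e^k(\bs v\cdot\bs t_e)\|_{0,e}^2\sim h_K^{-1}\|Q_{\hat e}^k(\hat{\bs v}\cdot\hat{\bs t}_{\hat e})\|_{0,\hat e}^2$, which indeed yields the $h_K^4$ in \eqref{eq:WkKdecompprop}. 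Note that the paper avoids transforms altogether: it normalizes $q$ at a vertex and estimates $\|q\|_{0,K}$ directly through edge sup-norms, the relation $\partial_t q=Q_e^k(\bs v\cdot\bs t_e)+Q_e^k\big(\mathcal K_K(\curl\bs v)\cdot\bs t_e\big)$, inverse inequalities and \eqref{eq:poincareoperatorprop2}; that route is more elementary, while a correctly executed covariant scaling argument is a viable alternative.
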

\begin{proof}
Take a vertex $\delta\in\mathcal V(K)$. Due to \eqref{eq:poincareoperatorprop1}, $\bs v-\mathcal K_K(\curl\boldsymbol v)\in\bs W_k(K)\cap\ker(\curl)$, which means $\bs v-\mathcal K_K(\curl\boldsymbol v)\in\nabla\mathbb P_{k+1}(K)$. Choose $q\in \mathbb P_{k+1}(K)$ such that $\bs v-\mathcal K_K(\curl\boldsymbol v)=\nabla q$ and $q(\delta)=0$. By the fact $q\in \mathbb P_{2}(K)$, the norm equivalence of Lagrange element and the inverse inequality,
\begin{align*}
\|q\|_{0,K}^2&\lesssim h_K^2\sum_{e\in\mathcal E(K)}\|q\|_{0,e}^2\lesssim h_K^3\sum_{e\in\mathcal E(K)}\|q\|_{L^{\infty}(e)}^2=h_K^3\sum_{e\in\mathcal E(K)}\|q(\bs x)-q(\delta)\|_{L^{\infty}(e)}^2 \\
&\lesssim h_K^3\sum_{e\in\mathcal E(K)}h_e^2\|\partial_tq\|_{L^{\infty}(e)}^2\lesssim h_K^4\sum_{e\in\mathcal E(K)}\|\partial_tq\|_{0,e}^2.
\end{align*}
Since $\partial_tq=Q_e^k(\partial_tq)=Q_e^k(\boldsymbol v\cdot\boldsymbol t_e) + Q_e^k(\mathcal K_K\big(\curl\boldsymbol v)\cdot \boldsymbol t_e\big)$ on edge $e$, we get from the inverse inequality that
\begin{align*}
\|q\|_{0,K}^2&\lesssim h_K^4\sum_{e\in\mathcal E(K)}(\|Q_e^k(\boldsymbol v\cdot\boldsymbol t_e)\|_{0,e}^2+\|\mathcal K_K(\curl\boldsymbol v)\|_{0,e}^2) \\
&\lesssim h_K^2\|\mathcal K_K(\curl\boldsymbol v)\|_{0,K}^2+h_K^4\sum_{e\in\mathcal E(K)}\|Q_e^k(\boldsymbol v\cdot\boldsymbol t_e)\|_{0,e}^2.
\end{align*}
Finally we conclude \eqref{eq:WkKdecompprop} from \eqref{eq:poincareoperatorprop2}.
\end{proof}

\begin{lemma}
For $\boldsymbol v\in\bs W_k(K)$,
it holds the norm equivalence
\begin{equation}\label{eq:WkKnormequivalence}
\|\boldsymbol v\|_{0,K}^2\eqsim h_K^2\sum_{e\in\mathcal E(K)}\|Q_e^k(\boldsymbol v\cdot\boldsymbol t_e)\|_{0,e}^2 + h_K^3\sum_{F\in\mathcal F(K)}\|\boldsymbol{Q}_F^0((\curl\bs v)\times\bs n)\|_{0,F}^2.
\end{equation}
\end{lemma}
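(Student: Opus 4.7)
The plan is to combine the decomposition in Lemma~\ref{lem:WkKdecomp} with a norm-equivalence for $\curl\boldsymbol v$ viewed as a vector $P_1$ polynomial. The $\gtrsim$ half is routine from trace and inverse inequalities, so the real work is the $\lesssim$ half.

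First, I would invoke \eqref{eq:WkKdecomp} to write $\boldsymbol v=\nabla q+\mathcal K_K(\curl\boldsymbol v)$ with $q\in\mathbb P_{k+1}(K)$. An inverse inequality gives $\|\nabla q\|_{0,K}\lesssim h_K^{-1}\|q\|_{0,K}$, which combined with \eqref{eq:WkKdecompprop} yields
\begin{equation*}
\|\nabla q\|_{0,K}^2\lesssim h_K^2\|\curl\boldsymbol v\|_{0,K}^2+h_K^2\sum_{e\in\mathcal E(K)}\|Q_e^k(\boldsymbol v\cdot\boldsymbol t_e)\|_{0,e}^2.
\end{equation*}
By \eqref{eq:poincareoperatorprop2} the second summand satisfies $\|\mathcal K_K(\curl\boldsymbol v)\|_{0,K}\lesssim h_K\|\curl\boldsymbol v\|_{0,K}$. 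Therefore it suffices to control $\|\curl\boldsymbol v\|_{0,K}$ by the two terms on the right-hand side of \eqref{eq:WkKnormequivalence}.

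This is the main obstacle: the face degrees of freedom only see the tangential part $(\curl\boldsymbol v)\times\boldsymbol n$, so the normal component must be recovered from the edge data. Since $\curl\boldsymbol v\in\mathbb P_1(K;\mathbb R^3)$, the standard norm equivalence for the vector nonconforming $P_1$ element gives
\begin{equation*}
\|\curl\boldsymbol v\|_{0,K}^2\eqsim h_K\sum_{F\in\mathcal F(K)}\|\boldsymbol Q_F^0(\curl\boldsymbol v)\|_{0,F}^2.
\end{equation*}
On each face $F$ I split $\boldsymbol Q_F^0(\curl\boldsymbol v)$ into the tangential part, bounded directly by $\|\boldsymbol Q_F^0((\curl\boldsymbol v)\times\boldsymbol n)\|_{0,F}^2$, and the normal component. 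For the latter I use the identity \eqref{eq:20220207} and the fact that $\int_e\boldsymbol v\cdot\boldsymbol t_e\,ds=\int_e Q_e^k(\boldsymbol v\cdot\boldsymbol t_e)\,ds$, so Cauchy--Schwarz on each edge yields
\begin{equation*}
\Bigl|\int_F(\curl\boldsymbol v)\cdot\boldsymbol n_F\,ds\Bigr|^2\lesssim h_K\sum_{e\in\mathcal E(F)}\|Q_e^k(\boldsymbol v\cdot\boldsymbol t_e)\|_{0,e}^2,
\end{equation*}
and dividing by $|F|\eqsim h_K^2$ controls the normal part of $\boldsymbol Q_F^0(\curl\boldsymbol v)$ by the edge data. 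Combining these estimates and folding $h_K\|\curl\boldsymbol v\|_{0,K}^2$ back into $\|\nabla q\|_{0,K}^2$ produces the $\lesssim$ direction of \eqref{eq:WkKnormequivalence}.

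For the reverse direction, a scaled trace inequality for polynomials gives $\|\boldsymbol v\|_{0,e}^2\lesssim h_K^{-2}\|\boldsymbol v\|_{0,K}^2$, hence $h_K^2\|Q_e^k(\boldsymbol v\cdot\boldsymbol t_e)\|_{0,e}^2\lesssim\|\boldsymbol v\|_{0,K}^2$; similarly the face-to-element trace together with the inverse inequality $\|\curl\boldsymbol v\|_{0,K}\lesssim h_K^{-1}\|\boldsymbol v\|_{0,K}$ gives $h_K^3\|\boldsymbol Q_F^0((\curl\boldsymbol v)\times\boldsymbol n)\|_{0,F}^2\lesssim\|\boldsymbol v\|_{0,K}^2$. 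Summing over edges and faces completes the proof.
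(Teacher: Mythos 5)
Your proposal is correct and follows essentially the same route as the paper: the decomposition $\boldsymbol v=\nabla q+\mathcal K_K(\curl\boldsymbol v)$ from Lemma~\ref{lem:WkKdecomp} with \eqref{eq:WkKdecompprop} and \eqref{eq:poincareoperatorprop2}, the nonconforming $P_1$ norm equivalence for $\curl\boldsymbol v$ split into tangential and normal face components, and the identity \eqref{eq:20220207} to recover the normal component from the edge moments, with the reverse bound by trace and inverse inequalities. The only difference is cosmetic ordering (you apply the decomposition before estimating $\|\curl\boldsymbol v\|_{0,K}$, the paper after), and your scalings all check out.
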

\begin{proof}
Since $\curl\boldsymbol v\in\mathbb P_{1}(K;\mathbb R^3)$, by the norm equivalence of the nonconforming $P_1$ element,
\begin{align*}  
\|\curl\boldsymbol v\|_{0,K}^2&\lesssim h_K\sum_{F\in\mathcal F(K)}\|\boldsymbol{Q}_F^0(\curl\bs v)\|_{0,F}^2 \\
&\lesssim h_K\sum_{F\in\mathcal F(K)}\left(\|\boldsymbol{Q}_F^0((\curl\bs v)\times\bs n)\|_{0,F}^2+\|Q_F^0((\curl\bs v)\cdot\bs n)\|_{0,F}^2\right).
\end{align*}
From \eqref{eq:20220207} we get
\begin{align*}
h_K\|Q_F^0((\curl\bs v)\cdot\bs n)\|_{0,F}^2&\lesssim h_K^3|Q_F^0((\curl\bs v)\cdot\bs n)|^2\lesssim h_K\sum_{e\in\mathcal E(F)}|Q_e^0(\boldsymbol v\cdot\boldsymbol t_e)|^2 \\
&\lesssim \sum_{e\in\mathcal E(F)}\|Q_e^0(\boldsymbol v\cdot\boldsymbol t_e)\|_{0,e}^2\leq \sum_{e\in\mathcal E(F)}\|Q_e^k(\boldsymbol v\cdot\boldsymbol t_e)\|_{0,e}^2.
\end{align*}
Combining the last two inequalities yields
\begin{equation}\label{eq:202202071}
\|\curl\boldsymbol v\|_{0,K}^2\lesssim \sum_{e\in\mathcal E(K)}\|Q_e^k(\boldsymbol v\cdot\boldsymbol t_e)\|_{0,e}^2 + h_K\sum_{F\in\mathcal F(K)}\|\boldsymbol{Q}_F^0((\curl\bs v)\times\bs n)\|_{0,F}^2.
\end{equation}
Applying Lemma~\ref{lem:WkKdecomp} to $\boldsymbol{v}$, we derive from \eqref{eq:WkKdecomp}, the inverse inequality, \eqref{eq:poincareoperatorprop2} and~\eqref{eq:WkKdecompprop} that
\begin{align*}
\|\boldsymbol v\|_{0,K}^2&\leq 2\|\nabla q\|_{0,K}^2 + 2\|\mathcal K_K(\curl\boldsymbol v)\|_{0,K}^2\lesssim h_K^{-2}\|q\|_{0,K}^2 + h_K^{2}\|\curl\boldsymbol v\|_{0,K}^2 \\
&\lesssim h_K^2\|\curl\boldsymbol v\|_{0,K}^2+h_K^2\sum_{e\in\mathcal E(K)}\|Q_e^k(\boldsymbol v\cdot\boldsymbol t_e)\|_{0,e}^2.
\end{align*}
Then we acquire from \eqref{eq:202202071} that
$$
\|\boldsymbol v\|_{0,K}^2\lesssim h_K^2\sum_{e\in\mathcal E(K)}\|Q_e^k(\boldsymbol v\cdot\boldsymbol t_e)\|_{0,e}^2 + h_K^3\sum_{F\in\mathcal F(K)}\|\boldsymbol{Q}_F^0((\curl\bs v)\times\bs n)\|_{0,F}^2.
$$
The another side of \eqref{eq:WkKnormequivalence} follows from the inverse inequality.
\end{proof}

\subsection{Basis functions}
We will figure out the basis functions of $\bs W_0(K)$ in this subsection. We refer to \cite{ZhengHuXu2011} for the basis functions of $\bs W_1(K)$.
Let $\lambda_1$, $\lambda_2$, $\lambda_3$ and $\lambda_4$ be the barycentric coordinates of point $\bs x$ with respect to the vertices $\bs x_{1}, \bs x_{2}, \bs x_{3}$ and $\bs x_{4}$ of the tetrahedron $K$ respectively.
Let $F_l$ be the face of $K$ opposite to $\bs x_l$. And the vertices of $F_l$ denoted by $\bs x_{l_1}$, $\bs x_{l_2}$ and $\bs x_{l_3}$ with $l_1<l_2<l_3$.
Set $\bs t_{ij}:=\bs x_{j}-\bs x_{i}$, which is a tangential vector to the edge $e_{ij}$ with vertices $\bs x_{i}$ and $\bs x_{j}$, and similarly define other tangential vectors with different subscripts.
For ease of presentation,
let
\[
M_{e_{ij}}(\bs v):=\frac{1}{|e_{ij}|}\int_{e_{ij}}\bs v\cdot\bs t_{ij} \dd s,\quad \bs M_{F_l}(\bs v):=\int_{F_l}(\curl\bs v)\times\bs n_l\dd s,
\]
\[
M_{F_l,1}(\bs v):=\frac{1}{2|F_l|(\nabla\lambda_{l_1}\times\nabla\lambda_{l_2})\cdot\bs n_{l}}\int_{F_l}(\curl\bs v)\cdot((\bs n_{l}\times\nabla\lambda_{l_2})\times\bs n_{l})\dd s,
\]
\[
M_{F_l,2}(\bs v):=\frac{1}{2|F_l|(\nabla\lambda_{l_2}\times\nabla\lambda_{l_1})\cdot\bs n_{l}}\int_{F_l}(\curl\bs v)\cdot((\bs n_{l}\times\nabla\lambda_{l_1})\times\bs n_{l})\dd s.
\]
The degrees of freedom $M_{F_l,1}(\bs v)$ and $M_{F_l,2}(\bs v)$ are equivalent to $\bs M_{F_l}(\bs v)$, i.e. \eqref{gradcurlncfmdof2}.
\subsubsection{Basis functions corresponding to the face degrees of freedom}
Define 
\begin{align*}
\bs\varphi_{F_l,i}&:=\frac{1}{4}(8\lambda_l-3)(\bs x-\bs x_K)\times(\bs n_{l}\times\nabla\lambda_{l_i}) +\frac{1}{4}(\bs x_l-\bs x_K)\cdot\bs n_{l}\nabla\lambda_{l_i} + \frac{1}{16}\bs n_{l} \\
&\;=\frac{1}{16}(8\lambda_l-3)[(4\lambda_{l_i}-1)\bs n_{l}-4(\bs x-\bs x_K)\cdot\bs n_{l}\nabla\lambda_{l_i}] \\
&\quad\;\;+\frac{1}{4}(\bs x_l-\bs x_K)\cdot\bs n_{l}\nabla\lambda_{l_i} +\frac{1}{16}\bs n_{l} 
\end{align*}
for $i=1,2$. We will show that $\bs\varphi_{F_l,1}$ and $\bs\varphi_{F_l,2}$ are the basis functions being dual to $M_{F_l,1}(\bs v)$ and $M_{F_l,2}(\bs v)$.


\begin{lemma}
Functions $\bs\varphi_{F_l,1}$ and $\bs\varphi_{F_l,2}$ are the basis functions of $\bs W_0(K)$ being dual to $M_{F_l,1}(\bs v)$ and $M_{F_l,2}(\bs v)$, respecrtively. That is
\begin{equation}\label{eq:facebasisdof1}
M_{e}(\bs\varphi_{F_l,1})=M_{e}(\bs\varphi_{F_l,2})=0\quad\forall~e\in\mathcal E(K),
\end{equation}
\begin{equation}\label{eq:facebasisdof2}
\bs M_{F}(\bs\varphi_{F_l,1})=\bs M_{F}(\bs\varphi_{F_l,2})=\bs 0\quad\forall~F\in\mathcal F(K)\backslash \{F_l\},
\end{equation}
\begin{equation}\label{eq:facebasisdof3}
M_{F_l,2}(\bs\varphi_{F_l,1})=M_{F_l,1}(\bs\varphi_{F_l,2})=0,\quad M_{F_l,1}(\bs\varphi_{F_l,1})=M_{F_l,2}(\bs\varphi_{F_l,2})=1.
\end{equation}
\end{lemma}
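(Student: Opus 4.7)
My plan is to verify \eqref{eq:facebasisdof1}--\eqref{eq:facebasisdof3} by direct calculation. First, note that $\bs\varphi_{F_l, i} \in \bs W_0(K)$: the first summand has the form $(\bs x - \bs x_K) \times \bs p$ with $\bs p = (8\lambda_l - 3)(\bs n_l \times \nabla\lambda_{l_i}) \in \mathbb P_1(K;\mathbb R^3)$, while the other two summands are constant vectors and therefore belong to $\nabla \mathbb P_1(K)$. The decisive preliminary computation is that of $\curl \bs\varphi_{F_l, i}$; only the first summand contributes. Using the identity $\curl((\bs x - \bs x_K) \times \bs c) = -2\bs c$ for constant $\bs c$, the Leibniz rule $\curl(f\bs w) = f\curl \bs w + \nabla f \times \bs w$, the orthogonality $\nabla\lambda_l \cdot (\bs n_l \times \nabla\lambda_{l_i}) = 0$ (because $\nabla\lambda_l \parallel \bs n_l$), and the affine identity $\nabla\lambda_l \cdot (\bs x - \bs x_K) = \lambda_l - \tfrac{1}{4}$, I expect to arrive at
\[
\curl \bs\varphi_{F_l, i} = 2(1 - 3\lambda_l)(\bs n_l \times \nabla\lambda_{l_i}).
\]

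From this formula the face conditions follow readily. For $m \neq l$, $\bs M_{F_m}(\bs\varphi_{F_l, i})$ equals a constant vector times $\int_{F_m}(1 - 3\lambda_l)\dd s = |F_m|(1 - 3 \cdot \tfrac{1}{3}) = 0$, yielding \eqref{eq:facebasisdof2}. On $F_l$, where $\lambda_l \equiv 0$, the curl is a constant tangential vector, and the scalar-triple-product simplification
\[
(\bs n_l \times \nabla\lambda_{l_i}) \cdot \bigl[(\bs n_l \times \nabla\lambda_{l_j}) \times \bs n_l\bigr] = \bs n_l \cdot (\nabla\lambda_{l_i} \times \nabla\lambda_{l_j})
\]
(using $|\bs n_l| = 1$ together with $(\bs n_l \times \nabla\lambda_{l_i}) \perp \bs n_l$) delivers the Kronecker-delta normalization of \eqref{eq:facebasisdof3} after dividing by the denominators in the definitions of $M_{F_l, 1}$ and $M_{F_l, 2}$.

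The main obstacle is the edge conditions \eqref{eq:facebasisdof1}. I would split $\bs\varphi_{F_l, i}$ into its three summands; the two gradient summands contribute endpoint differences of the corresponding linear potentials along each edge $e_{jk}$. For the first summand, the pivotal observation is that on $e_{jk}$ the vector $\bs t_{jk} \times (\bs x - \bs x_K)$ is constant and equals $\bs t_{jk} \times (\bs x_j - \bs x_K)$, so the edge integral factors as a scalar times $\int_{e_{jk}}(8\lambda_l - 3)\dd s$. This integral evaluates to $-3|e|$ when $e \subset F_l$ (so $\lambda_l \equiv 0$) and to $|e|$ when $e$ contains the vertex $\bs x_l$ (so the mean of $\lambda_l$ is $\tfrac{1}{2}$). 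Simplifying the resulting triple product via $(\bs a \times \bs b) \cdot (\bs c \times \bs d) = (\bs a \cdot \bs c)(\bs b \cdot \bs d) - (\bs a \cdot \bs d)(\bs b \cdot \bs c)$, and using the geometric identities $\bs n_l \cdot (\bs x_{l_j} - \bs x_K) = h_l/4$, $\bs n_l \cdot (\bs x_l - \bs x_K) = -3h_l/4$ (with $h_l$ the distance from $\bs x_l$ to $F_l$), and $\bs n_l \cdot \bs t_{jk} = 0$ for edges $e_{jk} \subset F_l$, will show that the three contributions cancel exactly in both cases. This cancellation is the reason for the precise coefficients $\tfrac{1}{4}(\bs x_l - \bs x_K) \cdot \bs n_l$ and $\tfrac{1}{16}$ appearing in the definition of $\bs\varphi_{F_l, i}$, and handles both $i = 1$ and $i = 2$ by the symmetry $l_1 \leftrightarrow l_2$.
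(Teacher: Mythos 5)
Your proposal is correct and takes essentially the same route as the paper: compute $\curl\bs\varphi_{F_l,i}=2(1-3\lambda_l)\,\bs n_l\times\nabla\lambda_{l_i}$ and exploit the mean-value properties of $1-3\lambda_l$ on the faces (together with the scalar-triple-product simplification) to get \eqref{eq:facebasisdof2}--\eqref{eq:facebasisdof3}, and verify \eqref{eq:facebasisdof1} by direct integration along each edge. Your bookkeeping for the edge integrals (constancy of $\bs t_{jk}\times(\bs x-\bs x_K)$ along the edge, the triple-product expansion, and the identities $\bs n_l\cdot(\bs x_{l_j}-\bs x_K)=h_l/4$, $\bs n_l\cdot(\bs x_l-\bs x_K)=-3h_l/4$) is just a more uniform organization of the same edge-by-edge computation the paper performs, and the claimed cancellations do check out for both the three edges of $F_l$ and the three edges through $\bs x_l$.
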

\begin{proof}
Apparently $\bs\varphi_{F_l,1}\cdot \bs t_{l_2l_3}=0$.
By $\bs n_{l}\cdot\bs t_{l_1l_2}=0$, $\nabla\lambda_{l_1}\cdot\bs t_{l_1l_2}=-1$ and $\lambda_l|_{e_{l_1l_2}}=0$, we get
\begin{align*}
M_{e_{l_1l_2}}(\bs\varphi_{F_l,1})&=\frac{1}{4|e_{l_1l_2}|}\int_{e_{l_1l_2}}\big((8\lambda_l-3)(\bs x-\bs x_K)\cdot\bs n_l - (\bs x_l-\bs x_K)\cdot\bs n_l\big)\dd s \\
&=-\frac{3}{4}(\bs x_{l_1}-\bs x_K)\cdot\bs n_l - \frac{1}{4}(\bs x_l-\bs x_K)\cdot\bs n_l.
\end{align*}
Noting that
$
\bs x_l-\bs x_K+3(\bs x_{l_1}-\bs x_K)=\bs x_l+3\bs x_{l_1}-4\bs x_K=2\bs x_{l_1}-\bs x_{l_2}-\bs x_{l_3}
$
is parallel to face $F_l$, we have $\big(\bs x_l-\bs x_K+3(\bs x_{l_1}-\bs x_K)\big)\cdot\bs n_l=0$. Hence
$$
M_{e_{l_1l_2}}(\bs\varphi_{F_l,1})=0.
$$
Since $\bs n_l=\frac{\bs n_l\cdot\nabla\lambda_l}{|\nabla\lambda_l|^2}\nabla\lambda_l$, $4(\bs x-\bs x_K)\cdot\nabla\lambda_l=4\lambda_l-1$ and $(\lambda_{l_1}+\lambda_l)|_{e_{l_1l}}=1$, it follows
\begin{align*}
M_{e_{l_1l}}(\bs\varphi_{F_l,1})&=\frac{\bs n_l\cdot\nabla\lambda_l}{8|e_{l_1l}||\nabla\lambda_l|^2}\int_{e_{l_1l}}(8\lambda_l-3) \dd s + \frac{\bs n_l\cdot\nabla\lambda_l}{16|\nabla\lambda_l|^2}\big(1-4(\bs x_l-\bs x_K)\cdot\nabla\lambda_l\big) \\
&=\frac{\bs n_l\cdot\nabla\lambda_l}{16|\nabla\lambda_l|^2}\big(3-4(\bs x_l-\bs x_K)\cdot\nabla\lambda_l\big) =0.
\end{align*}
Similarly we can show that $M_{e}(\bs\varphi_{F_l,1})=0$ for other edges and $M_{e}(\bs\varphi_{F_l,2})=0$. Hence~\eqref{eq:facebasisdof1} holds.

On the other hand, by the identity $\curl((\bs x-\bs x_K)\times \bs q)=(\bs x-\bs x_K)\div\bs q - ((\bs x-\bs x_K)\cdot\nabla) \bs q-2\bs q$,
we have for $i=1,2$,
\[
\curl\bs\varphi_{F_l,i}=\frac{1}{4}\curl\big((8\lambda_l-3)(\bs x-\bs x_K)\times(\bs n_{l}\times\nabla\lambda_{l_i})\big)=2(1-3\lambda_l)\bs n_l\times\nabla\lambda_{l_i}.
\]
We conclude \eqref{eq:facebasisdof2}-\eqref{eq:facebasisdof3}
by the fact that $1-3\lambda_l$ is the basis function of the nonconforming $P_1$ element.
\end{proof}

\subsubsection{Basis functions corresponding to the edge degrees of freedom}
Next we construct the basis function corresponding to the degree of freedom $M_{e_{ij}}(\bs v)$. 
Recall the basis function of the lowest order N\'ed\'elec element of the first kind
$\lambda_i\nabla \lambda_j-\lambda_j\nabla \lambda_i$. Thanks to \eqref{eq:facebasisdof1}-\eqref{eq:facebasisdof2},  function $\lambda_i\nabla \lambda_j-\lambda_j\nabla \lambda_i$ can be modified by $\bs\varphi_{F_l,1}$ and $\bs\varphi_{F_l,2}$ to derive the basis function of $\bs W_0(K)$ corresponding to the degree of freedom $M_{e_{ij}}(\bs v)$.  
\begin{lemma}
Let
\[
\bs\varphi_{e_{ij}}:=\lambda_i\nabla \lambda_j-\lambda_j\nabla \lambda_i+\sum_{l=1}^4(c_{l,1}^{ij}\bs\varphi_{F_l,1} + c_{l,2}^{ij}\bs\varphi_{F_l,2})
\]
with constants
$$c_{l,1}^{ij}:=\frac{1}{(\nabla\lambda_{l_1}\times\nabla\lambda_{l_2})\cdot\bs n_{l}}(\nabla \lambda_j\times \nabla \lambda_i)\cdot((\bs n_{l}\times\nabla\lambda_{l_2})\times\bs n_{l}),$$ 
$$c_{l,2}^{ij}:=\frac{1}{(\nabla\lambda_{l_2}\times\nabla\lambda_{l_1})\cdot\bs n_{l}}(\nabla \lambda_j\times \nabla \lambda_i)\cdot((\bs n_{l}\times\nabla\lambda_{l_1})\times\bs n_{l}).$$ Then
\[
M_{e_{ij}}(\bs\varphi_{e_{ij}})=1,\quad M_{e}(\bs\varphi_{e_{ij}})=0, \quad
\bs M_{F}(\bs\varphi_{e_{ij}})=\bs0.
\]
for each $e\in\mathcal E(K)\backslash\{e_{ij}\}$ and $F\in\mathcal F(K)$.
\end{lemma}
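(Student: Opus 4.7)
The plan is to start from the lowest-order N\'ed\'elec edge shape function $\lambda_i\nabla\lambda_j-\lambda_j\nabla\lambda_i$, which already produces the desired edge moments, and then use the face basis functions $\bs\varphi_{F_l,1}$, $\bs\varphi_{F_l,2}$ to kill its face moments. Since \eqref{eq:facebasisdof1} says the face correctors have vanishing edge moments, the two stages decouple: the constants $c_{l,k}^{ij}$ only affect face moments.

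First I would verify that $\lambda_i\nabla\lambda_j-\lambda_j\nabla\lambda_i\in\bs W_0(K)$ via the identity
\[
\lambda_i\nabla\lambda_j-\lambda_j\nabla\lambda_i=(\bs x-\bs x_K)\times(\nabla\lambda_j\times\nabla\lambda_i)+\frac{1}{4}\nabla(\lambda_j-\lambda_i),
\]
obtained from the BAC--CAB expansion together with $(\bs x-\bs x_K)\cdot\nabla\lambda_m=\lambda_m-\tfrac14$, which displays it as an element of $\nabla\mathbb P_1(K)\oplus((\bs x-\bs x_K)\times\mathbb P_1(K;\mathbb R^3))$. Using $\nabla\lambda_m\cdot\bs t_{rs}=\delta_{ms}-\delta_{mr}$, one finds that $(\lambda_i\nabla\lambda_j-\lambda_j\nabla\lambda_i)\cdot\bs t_{ij}=\lambda_i+\lambda_j\equiv 1$ on $e_{ij}$, while on every other edge $e_{rs}$ the tangential component collapses to $\pm\lambda_i|_{e_{rs}}$ or $\pm\lambda_j|_{e_{rs}}$, which vanishes because $\{r,s\}$ necessarily misses at least one of $i,j$. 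Combined with \eqref{eq:facebasisdof1}, this yields $M_{e_{ij}}(\bs\varphi_{e_{ij}})=1$ and $M_e(\bs\varphi_{e_{ij}})=0$ for every $e\in\mathcal E(K)\setminus\{e_{ij}\}$, independently of the values of the constants $c_{l,k}^{ij}$.

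The face moments are then computed using the fact that $\curl(\lambda_i\nabla\lambda_j-\lambda_j\nabla\lambda_i)=2\nabla\lambda_i\times\nabla\lambda_j=-2\nabla\lambda_j\times\nabla\lambda_i$ is a constant vector on $K$; pulling it out of the integral in the definition of $M_{F_l,k}$ cancels the $2|F_l|$ in the denominator and produces exactly
\[
M_{F_l,k}(\lambda_i\nabla\lambda_j-\lambda_j\nabla\lambda_i)=-c_{l,k}^{ij},\qquad k=1,2.
\]
This is precisely what the prescribed constants are designed to cancel. Applying \eqref{eq:facebasisdof2}--\eqref{eq:facebasisdof3}, only the summand carrying index $F_l$ contributes to $M_{F_l,k}$, so
\[
M_{F_l,k}(\bs\varphi_{e_{ij}})=-c_{l,k}^{ij}+c_{l,k}^{ij}\cdot 1 + c_{l,k'}^{ij}\cdot 0 = 0
\]
for every $k$ and every $l$. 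Since the pair $(M_{F_l,1},M_{F_l,2})$ is equivalent to $\bs M_{F_l}$ (as noted after the definition of $M_{F_l,k}$), this forces $\bs M_F(\bs\varphi_{e_{ij}})=\bs0$ on every $F\in\mathcal F(K)$.

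The only delicate point is the sign and normalization bookkeeping in the face moment computation: one must carefully track the swap $\nabla\lambda_i\times\nabla\lambda_j=-\nabla\lambda_j\times\nabla\lambda_i$ to reconcile the computed moment with the constants $c_{l,k}^{ij}$ written in the statement, and match the $2|F_l|(\nabla\lambda_{l_1}\times\nabla\lambda_{l_2})\cdot\bs n_l$ factor in the denominator. Apart from that, everything reduces to the duality relations \eqref{eq:facebasisdof1}--\eqref{eq:facebasisdof3} already established for the face basis.
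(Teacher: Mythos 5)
Your proposal is correct and follows essentially the same route as the paper: edge moments come from the standard N\'ed\'elec function combined with \eqref{eq:facebasisdof1}, and face moments are cancelled by noting $\curl(\lambda_i\nabla\lambda_j-\lambda_j\nabla\lambda_i)=2\nabla\lambda_i\times\nabla\lambda_j$ is constant so that $M_{F_l,r}(\lambda_i\nabla\lambda_j-\lambda_j\nabla\lambda_i)=-c_{l,r}^{ij}$, then invoking \eqref{eq:facebasisdof2}--\eqref{eq:facebasisdof3}. The extra details you supply (membership of the N\'ed\'elec function in $\bs W_0(K)$ and the explicit edge-moment computation) are correct but are just the standard facts the paper cites without proof.
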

\begin{proof}
The identities $M_{e_{ij}}(\bs\varphi_{e_{ij}})=1$ and $M_{e}(\bs\varphi_{e_{ij}})=0$ follow from \eqref{eq:facebasisdof1} and the fact
\[
M_{e_{ij}}(\lambda_i\nabla \lambda_j-\lambda_j\nabla \lambda_i)=1,\quad M_{e}(\lambda_i\nabla \lambda_j-\lambda_j\nabla \lambda_i)=0\quad\forall~e\in\mathcal E(K)\backslash\{e_{ij}\}.
\]
On the other hand, we get from \eqref{eq:facebasisdof2}-\eqref{eq:facebasisdof3} and
$
\curl(\lambda_i\nabla \lambda_j-\lambda_j\nabla \lambda_i)=2\nabla \lambda_i\times\nabla \lambda_j
$ that
\[
M_{F_l,r}(\bs\varphi_{e_{ij}})=M_{F_l,r}(\lambda_i\nabla \lambda_j-\lambda_j\nabla \lambda_i) + c_{l,r}^{ij}=0
\]
for $r=1, 2$.
\end{proof}

In summary, we arrive at the basis functions being dual to the degrees of freedom $M_{F_{l},1}(\bs v)$, $M_{F_{l},2}(\bs v)$ and $M_{e_{ij}}(\bs v)$:
\begin{enumerate}
\item Two basis functions on each face $F_l$ ($1\leq l\leq 4$)
\[
\bs\varphi_{F_l,i}=\frac{1}{4}(8\lambda_l-3)(\bs x-\bs x_K)\times(\bs n_{l}\times\nabla\lambda_{l_i}) +\frac{1}{4}(\bs x_l-\bs x_K)\cdot\bs n_{l}\nabla\lambda_{l_i} + \frac{1}{16}\bs n_{l}
\]
for $i=1,2$, where $\bs x_K$ is the barycenter of $K$.
\item One basis function on each edge $e_{ij}$ ($1\leq i<j\leq 4$)
\[
\bs\varphi_{e_{ij}}=\lambda_i\nabla \lambda_j-\lambda_j\nabla \lambda_i+\sum_{l=1}^4(c_{l,1}^{ij}\bs\varphi_{F_l,1} + c_{l,2}^{ij}\bs\varphi_{F_l,2})
\]
with constants
$$c_{l,1}^{ij}:=\frac{1}{(\nabla\lambda_{l_1}\times\nabla\lambda_{l_2})\cdot\bs n_{l}}(\nabla \lambda_j\times \nabla \lambda_i)\cdot((\bs n_{l}\times\nabla\lambda_{l_2})\times\bs n_{l}),$$ 
$$c_{l,2}^{ij}:=\frac{1}{(\nabla\lambda_{l_2}\times\nabla\lambda_{l_1})\cdot\bs n_{l}}(\nabla \lambda_j\times \nabla \lambda_i)\cdot((\bs n_{l}\times\nabla\lambda_{l_1})\times\bs n_{l}).$$ 
\end{enumerate}

\section{Nonconforming finite element Stokes complexes}\label{sec:femstokescomplex}
We will consider the nonconforming finite element discretization of the Stokes complex \eqref{eq:Stokescomplex3d} in this section.
The homogeneous version of the Stokes complex \eqref{eq:Stokescomplex3d} is
\begin{equation*}
0\xrightarrow{\subset} H_0^1(\Omega)\xrightarrow{\nabla} \bs H_0(\grad\curl, \Omega) \xrightarrow{\curl} \boldsymbol H_0^1(\Omega; \mathbb{R}^3) \xrightarrow{\div} L_0^{2}(\Omega)\xrightarrow{}0,
\end{equation*}
where $\bs H_0(\grad\curl, \Omega):=\{\bs v\in\bs H_0(\curl, \Omega): \curl\bs v\in \boldsymbol H_0^1(\Omega; \mathbb{R}^3)\}$.
Since $\bs H_0(\curl, \Omega)\cap \bs H_0(\div, \Omega)=\bs H_0^1( \Omega; \mathbb R^3)$, it holds $\bs H_0(\grad\curl, \Omega)=\bs H_0(\curl^2, \Omega)$, where
\[
\bs H_0(\curl^2, \Omega):=\{\bs v\in\bs H_0(\curl, \Omega): \curl\bs v\in \bs H_0(\curl, \Omega)\}.
\]

We can use the Lagrange element, the nonconforming linear element and piecewise constant to discretize $H^1(\Omega)$, $\boldsymbol H^1(\Omega; \mathbb{R}^3)$ and $L^{2}(\Omega)$ in the Stokes complex~\eqref{eq:Stokescomplex3d}, respectively.
Take the Lagrange element space 
\[
V_h^g:=\{v_h\in H^{1}(\Omega): v_h|_K\in\mathbb P_{k+1}(K) \textrm{ for each } K\in\mathcal T_h\}
\]
with $k=0,1$, the nonconforming linear element space 
\begin{align*}
\bs V_h^s:=\Big\{\bs v_h\in \boldsymbol L^2(\Omega; \mathbb{R}^3): &\,\bs v_h|_K\in\mathbb P_{1}(K;\mathbb R^3) \textrm{ for each } K\in\mathcal T_h, \\
&\textrm{ and } \int_F\llbracket \bs v_h\rrbracket\dd s=\bs 0  \textrm{ for each } F\in\mathcal F_h^i\Big\},
\end{align*}
and the piecewise constant space
\[
\mathcal Q_h:=\{q_h\in L^2(\Omega): q_h|_K\in\mathbb P_0(K) \textrm{ for each } K\in\mathcal T_h\}.
\]
Here $\llbracket \bs v_h\rrbracket$ is the jump of $\bs v_h$ across $F$.
Define the global $\bs H(\grad\curl)$-nonconforming element space
\begin{align*}
\bs W_h:=\{\bs v_h\in \boldsymbol L^2(\Omega; \mathbb{R}^3):&\, \bs v_h|_K\in\bs W_k(K) \textrm{ for each } K\in\mathcal T_h,  \textrm{ and all the }\\
&\textrm{ degrees of freedom \eqref{gradcurlncfmdof1}-\eqref{gradcurlncfmdof2} are single-valued}\}.
\end{align*}
According to the proof of Lemma~\ref{lem:Whunisol}, it holds
\begin{equation}\label{eq:curlWhweakcontinuity}
\int_F\llbracket\curl\bs v_h\rrbracket\dd s=\bs0\quad\forall~\bs v_h\in\bs W_h, \; F\in\mathcal F_h^i.
\end{equation}

To prove the exactness of the nonconforming discrete Stokes complexes, we need the help of the N\'ed\'elec element spaces \cite{Nedelec1980,Nedelec1986}
\[
\bs V_h^c:=\{\bs v_h\in \bs H(\curl, \Omega): \bs v_h|_K\in\bs V_k^c(K) \textrm{ for each } K\in\mathcal T_h\},
\]
where $\bs V_k^c(K):=\mathbb P_{k}(K;\mathbb R^3)+ (\bs x-\bs x_K)\times\mathbb P_{0}(K;\mathbb R^3)$ with $k=0,1$.
Apparently $\bs V_k^c(K)\subset\bs W_k(K)$.
The degrees of freedom for $\bs V_k^c(K)$ are
\begin{align}
\int_e\bs v\cdot\bs t_e q\dd s & \quad \forall~q\in \mathbb P_k(e) \textrm{ on each }  e\in\mathcal E(K). \label{Nedelecdof1}
\end{align}
It is observed that the degrees of freedom \eqref{Nedelecdof1} are exactly same as \eqref{gradcurlncfmdof1}.
By the finite element de Rham complexes \cite{Arnold2018,ArnoldFalkWinther2006}, we have
\begin{equation}\label{eq:deRhamexact1}
\bs V_h^c\cap\ker(\curl)=\nabla V_h^g.
\end{equation}
The notation $\ker(\mathcal A)$ means the kernel space of the operator $\mathcal A$.

\begin{lemma}\label{lem:k01exact1}
It holds
\[
\bs W_h\cap\ker(\curl_h)=\nabla V_h^g.
\]
\end{lemma}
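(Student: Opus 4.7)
The plan is to prove the two inclusions separately, with the forward inclusion being routine and the reverse inclusion being the substantive part.

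For the easy direction $\nabla V_h^g\subset \bs W_h\cap\ker(\curl_h)$: given $q_h\in V_h^g$, the restriction satisfies $\nabla q_h|_K\in\nabla\mathbb P_{k+1}(K)\subset\bs W_k(K)$. The edge degrees of freedom \eqref{gradcurlncfmdof1} applied to $\nabla q_h$ reduce to $\int_e (\partial_{\bs t_e} q_h)\,q\dd s$, which are single-valued because the tangential derivative of the continuous function $q_h$ is single-valued on each edge. The face dofs \eqref{gradcurlncfmdof2} vanish identically since $\curl\nabla q_h=\bs0$. Hence $\nabla q_h\in\bs W_h$ and clearly $\curl_h\nabla q_h=\bs0$.

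For the reverse inclusion, the strategy is to show that $\bs W_h\cap\ker(\curl_h)$ actually sits inside the N\'ed\'elec space $\bs V_h^c$, and then invoke the known de~Rham exactness \eqref{eq:deRhamexact1}. Let $\bs v_h\in\bs W_h$ with $\curl_h\bs v_h=\bs0$. On each $K\in\mathcal T_h$, the decomposition $\bs W_k(K)=\nabla\mathbb P_{k+1}(K)\oplus((\bs x-\bs x_K)\times\mathbb P_1(K;\mathbb R^3))$ together with the injectivity of $\curl$ on the second summand (used already in the proof of Lemma~\ref{lem:Whunisol}) forces $\bs v_h|_K\in\nabla\mathbb P_{k+1}(K)$. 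Since $\nabla\mathbb P_{k+1}(K)\subset\mathbb P_k(K;\mathbb R^3)\subset\bs V_k^c(K)$ for both $k=0$ and $k=1$, we have $\bs v_h|_K\in\bs V_k^c(K)$ elementwise.

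To upgrade this elementwise statement to $\bs v_h\in\bs V_h^c$, I use the crucial observation (already highlighted in the paper) that the edge degrees of freedom \eqref{gradcurlncfmdof1} of $\bs W_h$ are \emph{identical} to the N\'ed\'elec degrees of freedom \eqref{Nedelecdof1}, which are unisolvent on $\bs V_k^c(K)$ and completely determine the tangential trace on each face. Since the edge dofs of $\bs v_h$ are single-valued by membership in $\bs W_h$, the tangential components of $\bs v_h$ are continuous across every interior face, so $\bs v_h\in\bs H(\curl,\Omega)$ and therefore $\bs v_h\in\bs V_h^c$. Applying the exactness \eqref{eq:deRhamexact1} to $\bs v_h\in\bs V_h^c\cap\ker(\curl)$ gives $\bs v_h\in\nabla V_h^g$, which finishes the proof.

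The only subtle step is the transition from single-valued edge dofs to genuine tangential continuity; this is standard N\'ed\'elec theory and is the reason the edge dofs of $\bs W_k(K)$ were designed to replicate those of $\bs V_k^c(K)$ in the first place. Once this is in hand, everything reduces to the polynomial identity $\bs W_k(K)\cap\ker(\curl)=\nabla\mathbb P_{k+1}(K)$ (local) and the known de~Rham exactness (global).
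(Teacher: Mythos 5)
Your proof is correct and follows essentially the same route as the paper: injectivity of $\curl$ on $(\bs x-\bs x_K)\times\mathbb P_1(K;\mathbb R^3)$ reduces the kernel to elementwise gradients, the coincidence of the degrees of freedom \eqref{gradcurlncfmdof1} and \eqref{Nedelecdof1} identifies these functions as elements of the conforming N\'ed\'elec space $\bs V_h^c$, and the de Rham exactness \eqref{eq:deRhamexact1} concludes. You merely make explicit the easy inclusion $\nabla V_h^g\subset\bs W_h\cap\ker(\curl_h)$ and the tangential-continuity step that the paper leaves implicit.
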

\begin{proof}
Since $\curl: (\bs x-\bs x_K)\times\mathbb P_{1}(K;\mathbb R^3) \to \mathbb P_{1}(K;\mathbb R^3)$ is injective \cite{Arnold2018,ArnoldFalkWinther2006}, we have
\[
\bs W_h\cap\ker(\curl_h)=\{\bs v_h\in \bs W_h: \bs v_h|_K\in\nabla\mathbb P_{k+1}(K) \textrm{ for each } K\in\mathcal T_h\},
\]
\[
\bs V_h^c\cap\ker(\curl_h)=\{\bs v_h\in \bs V_h^c: \bs v_h|_K\in\nabla\mathbb P_{k+1}(K) \textrm{ for each } K\in\mathcal T_h\}.
\]
Noting that the degrees of freedom \eqref{gradcurlncfmdof1} and \eqref{Nedelecdof1} are the same, it follows $\bs W_h\cap\ker(\curl_h)=\bs V_h^c\cap\ker(\curl_h)$.
Thus we finish the proof from \eqref{eq:deRhamexact1}.
\end{proof}

\begin{lemma}
The nonconforming discrete Stokes complex
\begin{equation}\label{eq:Stokescomplex3dncfem}
\mathbb R\xrightarrow{\subset} V_h^g\xrightarrow{\nabla} \bs W_h \xrightarrow{\curl_h} \bs V_h^s \xrightarrow{\div_h}  \mathcal Q_h\xrightarrow{}0
\end{equation}
is exact.
\end{lemma}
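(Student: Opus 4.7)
My plan is to check that every arrow in \eqref{eq:Stokescomplex3dncfem} lands in the next space, and then verify exactness at each interior position individually, relying on Lemma~\ref{lem:k01exact1} for the position at $\bs W_h$ and a dimension count (via Euler's formula) for the position at $\bs V_h^s$.

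First I would verify well-definedness of the arrows. The inclusion $\nabla V_h^g \subseteq \bs W_h$ holds because locally $\nabla\mathbb P_{k+1}(K)\subseteq \bs W_k(K)$ by construction, the edge dofs \eqref{gradcurlncfmdof1} of $\nabla v_h$ depend only on the (continuous) tangential trace of $v_h$ on edges, and the face dofs \eqref{gradcurlncfmdof2} vanish identically on gradients since $\curl\nabla=0$. The inclusion $\curl_h \bs W_h \subseteq \bs V_h^s$ follows from $\curl \bs W_k(K)\subseteq \mathbb P_1(K;\mathbb R^3)$ together with the face-mean continuity \eqref{eq:curlWhweakcontinuity}, and $\div_h\bs V_h^s\subseteq \mathcal Q_h$ is trivial. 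Exactness at $V_h^g$, namely $\ker(\nabla|_{V_h^g})=\mathbb R$, follows from connectedness of $\Omega$, and exactness at $\bs W_h$ is exactly Lemma~\ref{lem:k01exact1}.

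Next I would establish surjectivity $\div_h\bs V_h^s=\mathcal Q_h$: decomposing an arbitrary $q_h\in\mathcal Q_h$ as $q_h=q_h^0+\bar q$ with $q_h^0\in\mathcal Q_h\cap L_0^2(\Omega)$ of zero mean and $\bar q$ its mean, the classical inf-sup stability of the nonconforming $P_1$-$P_0$ pair on the homogeneous space $\bs V_{h0}^s\times(\mathcal Q_h\cap L_0^2(\Omega))$ yields a preimage $\bs v_h^0\in\bs V_{h0}^s$ for $q_h^0$, while the globally linear field $\bs v_h^c:=\tfrac{\bar q}{3}\bs x$ (continuous, hence trivially in $\bs V_h^s$) satisfies $\div \bs v_h^c=\bar q$. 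Summing yields the desired preimage.

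The remaining exactness at $\bs V_h^s$ reduces, via rank--nullity applied at all four positions together with the exactness already proven and the pointwise identity $\div_h\curl_h=0$, to the single dimensional identity $\dim V_h^g-\dim\bs W_h+\dim\bs V_h^s-\dim\mathcal Q_h=1$. For $k=0$ (the case $k=1$ is analogous), one has $\dim V_h^g=|\mathcal V_h|$, $\dim\bs V_h^s=3|\mathcal F_h|$, $\dim\mathcal Q_h=|\mathcal T_h|$, and $\dim\bs W_h=|\mathcal E_h|+2|\mathcal F_h|$, where the factor $2$ in the last term reflects the fact that the face dof $\int_F(\curl\bs v)\times\bs n\dd s$ is tangential to $F$ and hence contributes only two independent scalars per face. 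The alternating sum then telescopes to the Euler characteristic $|\mathcal V_h|-|\mathcal E_h|+|\mathcal F_h|-|\mathcal T_h|=1$ of the contractible polyhedron $\Omega$, as required. The main obstacle is precisely this bookkeeping: correctly identifying the number of independent scalars per face dof so that the Euler identity pops out, and threading the rank--nullity equalities through all four positions consistently.
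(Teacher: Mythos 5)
Your proof is correct and follows essentially the same route as the paper: exactness at $\bs W_h$ via Lemma~\ref{lem:k01exact1}, surjectivity $\div_h\bs V_h^s=\mathcal Q_h$ (which the paper simply cites to Crouzeix--Raviart rather than re-deriving through the inf-sup condition as you do), and exactness at $\bs V_h^s$ by a dimension count closed with Euler's formula. Your alternating-sum rank--nullity identity is just a rearrangement of the paper's direct comparison of $\dim\curl_h\bs W_h$ with $\dim\bigl(\bs V_h^s\cap\ker(\div_h)\bigr)$, including the same count of two tangential scalars per face for the dofs \eqref{gradcurlncfmdof2}.
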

\begin{proof}
We refer to  \cite{CrouzeixRaviart1973,BoffiBrezziFortin2013}  for $\div_h\bs V_h^s=\mathcal Q_h$ and Lemma~\ref{lem:k01exact1} for $\bs W_h\cap\ker(\curl_h)=\nabla V_h^g$.
By the definition of $\bs W_h$, apparently we have from \eqref{eq:curlWhweakcontinuity} that
\[
\curl_h\bs W_h\subseteq \bs V_h^s \cap\ker(\div_h).
\]
Then we prove
\[
\curl_h\bs W_h=\bs V_h^s \cap\ker(\div_h)
\]
by counting the dimensions of these spaces. Indeed, we have
\begin{align*}
\dim\curl_h\bs W_h&=\dim\bs W_h-\dim V_h^g+1 \\
&=(k+1)\#\mathcal E_h+2\#\mathcal F_h -\#\mathcal V_h-k\#\mathcal E_h+1 \\
&=\#\mathcal E_h+2\#\mathcal F_h-\#\mathcal V_h+1,
\end{align*}
\[
\dim\bs V_h^s \cap\ker(\div_h)=\dim\bs V_h^s-\dim\mathcal Q_h=3\#\mathcal F_h-\#\mathcal T_h.
\]
Finally apply the Euler's formula $\#\mathcal V_h-\#\mathcal E_h+\#\mathcal F_h-\#\mathcal T_h=1$ to end the proof.
\end{proof}

\begin{corollary}
The nonconforming discrete Stokes complex with homogeneous boundary condition
\begin{equation}\label{eq:Stokescomplex3dncfem0}
0\xrightarrow{\subset} V_{h0}^g\xrightarrow{\nabla} \bs W_{h0} \xrightarrow{\curl_h} \bs V_{h0}^s \xrightarrow{\div_h}  \mathcal Q_{h0}\xrightarrow{}0
\end{equation}
is exact, where $V_{h0}^g:=V_h^g\cap H_0^1(\Omega)$, $\mathcal Q_{h0}:=\mathcal Q_h\cap L_0^2(\Omega)$, and
\[
\bs W_{h0}:=\{\bs v_h\in\bs W_h: \textrm{all the degrees of freedom \eqref{gradcurlncfmdof1}-\eqref{gradcurlncfmdof2} on $\partial\Omega$ vanish}\},
\]
\[
\bs V_{h0}^s:=\left\{\bs v_h\in\bs V_h^s: \int_F \bs v_h\dd s=\bs 0  \textrm{ for each } F\in\mathcal F_h\backslash\mathcal F_h^{i}\right\}.
\]
\end{corollary}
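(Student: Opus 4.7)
The plan is to mirror the proof of the unconstrained complex \eqref{eq:Stokescomplex3dncfem}, tracking the homogeneous boundary conditions at every step.

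First, I verify that \eqref{eq:Stokescomplex3dncfem0} is actually a complex, i.e., each map preserves the boundary condition. For $v \in V_{h0}^g$: on any boundary edge $e\subset\partial\Omega$, $\partial_{t_e} v=0$, which kills the dofs \eqref{gradcurlncfmdof1} of $\nabla v$; and $\curl\nabla v=0$ trivially kills \eqref{gradcurlncfmdof2}, so $\nabla V_{h0}^g\subseteq \bs W_{h0}$. For $\bs v_h\in \bs W_{h0}$ and a boundary face $F$: the tangential moment of $\curl\bs v_h$ over $F$ vanishes directly by \eqref{gradcurlncfmdof2}; its normal moment equals $\sum_{e\in\mathcal E(F)} \int_e\bs v_h\cdot\bs t_{F,e}\dd s$ by \eqref{eq:20220207}, which is zero since every $e\in\mathcal E(F)$ lies on $\partial\Omega$. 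Hence $\int_F\curl_h\bs v_h\dd s=\bs 0$ and $\curl_h\bs v_h\in \bs V_{h0}^s$. For $\bs v_h\in\bs V_{h0}^s$, Crouzeix--Raviart weak continuity together with vanishing boundary face averages gives $\int_\Omega \div_h \bs v_h\dd x=0$, so $\div_h\bs V_{h0}^s\subseteq \mathcal Q_{h0}$.

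Next, exactness at each node. Injectivity of $\nabla$ on $V_{h0}^g$ is immediate. For exactness at $\bs W_{h0}$, I take $\bs v_h\in \bs W_{h0}$ with $\curl_h\bs v_h=0$; Lemma~\ref{lem:k01exact1} gives $q_h\in V_h^g$ with $\bs v_h=\nabla q_h$. The vanishing boundary edge dofs force $Q_e^k(\partial_{t_e} q_h)=0$, and since $\partial_{t_e} q_h\in \mathbb P_k(e)$ this gives $\partial_{t_e} q_h=0$ on every $e\subset\partial\Omega$. Thus $q_h$ is constant on each boundary edge with agreeing values across shared vertices; because the low-order Lagrange element ($k+1\in\{1,2\}$) on a triangular face is determined by its boundary-edge values, $q_h$ is a single constant on $\partial\Omega$, and subtracting that constant places $q_h$ in $V_{h0}^g$. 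Surjectivity $\div_h:\bs V_{h0}^s\to\mathcal Q_{h0}$ is the standard Crouzeix--Raviart-$P_0$ stability \cite{CrouzeixRaviart1973,BoffiBrezziFortin2013}.

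Exactness at $\bs V_{h0}^s$ is then a dimension count. The preceding steps give $\dim\curl_h\bs W_{h0}=\dim\bs W_{h0}-\dim V_{h0}^g$ and $\dim(\bs V_{h0}^s\cap\ker\div_h)=\dim\bs V_{h0}^s-\dim\mathcal Q_{h0}$. Writing $\mathcal V_h^i,\mathcal E_h^i$ for interior vertices and edges, one has $\dim V_{h0}^g=\#\mathcal V_h^i+k\#\mathcal E_h^i$, $\dim\bs W_{h0}=(k+1)\#\mathcal E_h^i+2\#\mathcal F_h^i$, $\dim\bs V_{h0}^s=3\#\mathcal F_h^i$ and $\dim\mathcal Q_{h0}=\#\mathcal T_h-1$. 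Equality of the two differences above reduces to $\#\mathcal V_h^i-\#\mathcal E_h^i+\#\mathcal F_h^i-\#\mathcal T_h=-1$, i.e.\ the relative Euler identity $\chi(\Omega)-\chi(\partial\Omega)=1-2=-1$ obtained from $\Omega$ contractible and $\partial\Omega\cong S^2$.

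The main obstacle I anticipate is the boundary step in exactness at $\bs W_{h0}$: upgrading ``constant on each boundary edge'' to ``one constant on $\partial\Omega$'' requires both that the boundary-edge graph of $\partial\Omega$ is connected (true since $\partial\Omega\cong S^2$) and that the boundary-face values of $q_h$ are pinned by its boundary-edge values (true for $k+1\in\{1,2\}$). Everything else is bookkeeping once the unconstrained complex is in hand.
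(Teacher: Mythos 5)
Your proof is correct and follows essentially the route the paper intends: the paper states this corollary without proof, and your argument is the natural adaptation of the lemma's proof for \eqref{eq:Stokescomplex3dncfem} (kernel identification via Lemma~\ref{lem:k01exact1}, Crouzeix--Raviart surjectivity, and a dimension count), now with the homogeneous boundary conditions tracked at each map. The extra details you supply — the constant-on-$\partial\Omega$ argument for $k+1\in\{1,2\}$ and the relative Euler identity $\#\mathcal V_h^i-\#\mathcal E_h^i+\#\mathcal F_h^i-\#\mathcal T_h=-1$ — are exactly the points the paper leaves implicit, and they are handled correctly.
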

The space $\bs V_{h0}^s$ possesses the norm equivalence \cite[Section 10.6]{BrennerScott2008}
\begin{equation}\label{eq:normequivVsh}
\|\bs v_h\|_{1,h}\eqsim |\bs v_h|_{1,h}\quad\forall~\bs v_h\in\bs V_{h0}^s.
\end{equation}
Equip $\boldsymbol W_{h0}$ with the discrete squared norm
\[
\|\bs v_h\|_{H_h(\grad\curl)}^2:= \|\bs v_h\|_0^2+\|\curl_h\bs v_h\|_0^2+|\curl_h\bs v_h|_{1,h}^2.
\]
Since $\curl_h\bs v_h\in \bs V_{h0}^s$ for any $\bs v_h\in\boldsymbol W_{h0}$,
applying \eqref{eq:normequivVsh} to $\curl_h\bs v_h$ gives
\begin{equation*}
\|\bs v_h\|_{H_h(\grad\curl)}\eqsim \|\bs v_h\|_0+|\curl_h\bs v_h|_{1,h}\quad\forall~\bs v_h\in\boldsymbol W_{h0}.
\end{equation*}

Next we focus on the commutative diagrams for the Stokes complexes \eqref{eq:Stokescomplex3dncfem} and~\eqref{eq:Stokescomplex3dncfem0}. For this, we introduce some interpolation operators. 
For each $K\in\mathcal T_h$, let $I_K^g: H^2(K)\to \mathbb P_{k+1}(K)$ be the nodal interpolation operator of the Lagrange element \cite{Ciarlet1978},  and $\bs I_K^s: \bs H^1(K;\mathbb R^3)\to \mathbb P_{1}(K;\mathbb R^3)$ be the nodal interpolation operator of the nonconforming linear element \cite{BrennerScott2008}. We have \cite{BoffiBrezziFortin2013}
\begin{equation}\label{eq:commutingIsQlocal}
\div(\bs I_K^s\bs v)=Q_K\div\bs v\quad\forall~\bs v\in \bs H^1(K;\mathbb R^3),
\end{equation}
\begin{equation}\label{eq:estimateIKs}
\|\bs v-\bs I_K^{s}\bs v\|_{0,K}+h_K|\bs v-\bs I_K^{s}\bs v|_{1,K}\lesssim h_K^{j}|\bs v|_{j,K}\quad\forall~\bs v\in \bs H^j(K;\mathbb R^3), j=1,2.
\end{equation}
Define $\boldsymbol I_K^{gc}: \bs H^1(\curl, K)\to\bs W_k(K)$ as the nodal interpolation operator based on the degrees of freedom \eqref{gradcurlncfmdof1}-\eqref{gradcurlncfmdof2}. 
By Lemma~\ref{lem:Whunisol}, we get
\begin{equation}\label{eq:202202072}
\bs I_K^{gc}\bs q=\bs q\quad\forall~\bs q\in \bs W_k(K).
\end{equation}

\begin{lemma}
It holds
\begin{equation}\label{eq:estimateIKgc}
\|\bs v-\bs I_K^{gc}\bs v\|_{0,K}\lesssim h_K^{k+1}|\bs v|_{k+1,K}+ h_K^{2}|\bs v|_{2,K}\quad\forall~\bs v\in \bs H^2(K;\mathbb R^3).
\end{equation}
\end{lemma}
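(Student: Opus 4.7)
The plan is a standard Bramble--Hilbert argument built on top of the norm equivalence \eqref{eq:WkKnormequivalence} that was just established. Since $\mathbb P_k(K;\mathbb R^3)\subset \bs W_k(K)$, for any $\bs p\in\mathbb P_k(K;\mathbb R^3)$ the projection identity \eqref{eq:202202072} gives $\bs I_K^{gc}\bs p=\bs p$, so I will decompose
\[
\bs v-\bs I_K^{gc}\bs v=(\bs v-\bs p)-\bs I_K^{gc}(\bs v-\bs p),
\]
where $\bs p$ is taken to be an optimal averaged Taylor polynomial of degree $k$. The first piece is controlled by the classical Bramble--Hilbert lemma, giving $\|\bs v-\bs p\|_{0,K}\lesssim h_K^{k+1}|\bs v|_{k+1,K}$ together with the companion bounds $|\bs v-\bs p|_{1,K}\lesssim h_K^{k}|\bs v|_{k+1,K}$ (when $k\ge 1$) and $|\bs v-\bs p|_{2,K}=|\bs v|_{2,K}$.

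For the second piece, I will apply the norm equivalence \eqref{eq:WkKnormequivalence} to $\bs I_K^{gc}(\bs v-\bs p)\in\bs W_k(K)$. Since the interpolation preserves the degrees of freedom \eqref{gradcurlncfmdof1}--\eqref{gradcurlncfmdof2}, we get
\[
Q_e^k\bigl(\bs I_K^{gc}(\bs v-\bs p)\cdot\bs t_e\bigr)=Q_e^k\bigl((\bs v-\bs p)\cdot\bs t_e\bigr),\quad \bs Q_F^0\bigl(\curl\bs I_K^{gc}(\bs v-\bs p)\times\bs n\bigr)=\bs Q_F^0\bigl(\curl(\bs v-\bs p)\times\bs n\bigr),
\]
using \eqref{eq:20220207} together with vanishing-mean arguments exactly as in the proof of Lemma~\ref{lem:Whunisol}. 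The problem then reduces to estimating the edge and face quantities in terms of Sobolev data of $\bs v-\bs p$.

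For the edge contribution $h_K^2\sum_e\|Q_e^k((\bs v-\bs p)\cdot\bs t_e)\|_{0,e}^2$, I use $L^2$-stability of $Q_e^k$ and the scaled edge trace inequality
\[
\|w\|_{0,e}^2\lesssim h_K^{-2}\|w\|_{0,K}^2+|w|_{1,K}^2+h_K^2|w|_{2,K}^2\qquad\forall\,w\in H^2(K),
\]
which holds in $\mathbb R^3$ via the Sobolev embedding $H^2(K)\hookrightarrow C^0(\bar K)$. Substituting the Bramble--Hilbert bounds on $\bs v-\bs p$ yields $h_K^{k+1}|\bs v|_{k+1,K}+h_K^2|\bs v|_{2,K}$ after taking square roots. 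For the face contribution $h_K^3\sum_F\|\bs Q_F^0(\curl(\bs v-\bs p)\times\bs n)\|_{0,F}^2$, I first drop the projection $\bs Q_F^0$ by $L^2$-stability and then apply the usual face trace inequality to $\nabla(\bs v-\bs p)$, obtaining a bound of the form $h_K^2|\bs v-\bs p|_{1,K}^2+h_K^4|\bs v-\bs p|_{2,K}^2$. Feeding in the Bramble--Hilbert estimates again produces the same two rates. Combining with the first piece completes \eqref{eq:estimateIKgc}.

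The main technical point, and the reason the estimate carries an unremovable $h_K^2|\bs v|_{2,K}$ term even in the lowest order case $k=0$, is that the face degrees of freedom \eqref{gradcurlncfmdof2} involve $\curl\bs v$ on $2$-dimensional sets, forcing an $H^2$-trace argument regardless of how low the polynomial degree is; this is the sole obstacle, as everything else is routine scaling.
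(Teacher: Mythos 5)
Your proposal is correct and follows essentially the same route as the paper's proof: the paper also subtracts a degree-$k$ polynomial approximation (it uses the $L^2$-projection $\bs Q_K^k$ in place of your averaged Taylor polynomial), uses the invariance \eqref{eq:202202072} together with $\mathbb P_k(K;\mathbb R^3)\subset\bs W_k(K)$, applies the norm equivalence \eqref{eq:WkKnormequivalence} to $\bs I_K^{gc}(\bs v-\bs p)$ with the DOF-preservation identities, drops the projections $Q_e^k$, $\bs Q_F^0$ by $L^2$-stability, and finishes with scaled trace inequalities and polynomial approximation estimates. The only cosmetic differences are your choice of quasi-interpolant and the precise form of the edge/face trace inequalities, which do not change the argument.
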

\begin{proof}
Set $\bs w=\bs v-\bs Q_K^k\bs v$ for ease of presentation.
By the norm equivalence \eqref{eq:WkKnormequivalence} and the definition of $\bs I_K^{gc}$,
\begin{align*}
\|\bs I_K^{gc}\boldsymbol w\|_{0,K}^2&\lesssim h_K^2\sum_{e\in\mathcal E(K)}\|Q_e^k((\bs I_K^{gc}\boldsymbol w)\cdot\boldsymbol t_e)\|_{0,e}^2 + h_K^3\sum_{F\in\mathcal F(K)}\|\boldsymbol{Q}_F^0(\curl(\bs I_K^{gc}\bs w)\times\bs n)\|_{0,F}^2 \\
&= h_K^2\sum_{e\in\mathcal E(K)}\|Q_e^k(\boldsymbol w\cdot\boldsymbol t_e)\|_{0,e}^2 + h_K^3\sum_{F\in\mathcal F(K)}\|\boldsymbol{Q}_F^0((\curl\bs w)\times\bs n)\|_{0,F}^2 \\
&\leq h_K^2\sum_{e\in\mathcal E(K)}\|\boldsymbol w\|_{0,e}^2 + h_K^3\sum_{F\in\mathcal F(K)}\|\curl\bs w\|_{0,F}^2.
\end{align*}
Then we obtain from \eqref{eq:202202072}, $\mathbb P_k(K;\mathbb R^3)\subset \bs W_k(K)$ and the trace inequality that
\begin{align*}
\|\bs v-\bs I_K^{gc}\bs v\|_{0,K}^2&=\|\bs w-\bs I_K^{gc}\bs w\|_{0,K}^2\leq 2\|\bs w\|_{0,K}^2+2\|\bs I_K^{gc}\bs w\|_{0,K}^2 \\
&\lesssim \|\bs w\|_{0,K}^2+h_K^2\sum_{e\in\mathcal E(K)}\|\boldsymbol w\|_{0,e}^2 + h_K^3\sum_{F\in\mathcal F(K)}\|\curl\bs w\|_{0,F}^2 \\
&\lesssim \|\bs w\|_{0,K}^2+h_K\sum_{F\in\mathcal F(K)}(\|\boldsymbol w\|_{0,F}^2+h_K^2|\boldsymbol w|_{1,F}^2+h_K^2\|\curl\bs w\|_{0,F}^2) \\
&\lesssim \|\bs w\|_{0,K}^2+h_K^2|\bs w|_{1,K}^2+h_K^4|\bs w|_{2,K}^2.
\end{align*}
Therefore the inequality \eqref{eq:estimateIKgc} holds from the error estimate of
$\bs Q_K^k$.
\end{proof}

\begin{lemma}
The operators $I_K^g$, $\bs I_K^{gc}$ and $\bs I_K^s$ satisfy the following commuting properties
\begin{equation}\label{eq:commutingIgIclocal}
\nabla(I_K^gv)=\bs I_K^{gc}(\nabla v)\quad \forall~v\in H^2(K),
\end{equation}
\begin{equation}\label{eq:commutingIcIslocal}
\curl(\bs I_K^{gc}\bs v)=\bs I_K^s(\curl \bs v)\quad \forall~\bs v\in \bs H^1(\curl, K).
\end{equation}
\end{lemma}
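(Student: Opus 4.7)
The plan is to establish both commuting identities through unisolvence of the target finite elements, by checking that both sides of each equality share the same degrees of freedom.

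For \eqref{eq:commutingIgIclocal}, I would first observe that $\nabla(I_K^gv)$ lies in $\nabla\mathbb P_{k+1}(K)\subset\bs W_k(K)$ while $\bs I_K^{gc}(\nabla v)\in\bs W_k(K)$ by construction, so by Lemma~\ref{lem:Whunisol} it suffices to compare their values against the degrees of freedom \eqref{gradcurlncfmdof1}-\eqref{gradcurlncfmdof2}. The face degrees of freedom \eqref{gradcurlncfmdof2} match automatically: $\curl\nabla(I_K^gv)=\bs 0$ annihilates the one on the left, while by the defining property of $\bs I_K^{gc}$ the one on the right equals $\int_F\curl(\nabla v)\times\bs n\dd s=\bs 0$. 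For the edge degrees of freedom \eqref{gradcurlncfmdof1} I would apply integration by parts along $e$ to obtain $\int_e\partial_{t_e}(I_K^gv-v)q\dd s=[(I_K^gv-v)q]_{\partial e}-\int_e(I_K^gv-v)\partial_{t_e}q\dd s$; the boundary contribution vanishes since $I_K^gv=v$ at the vertices of $K$, and the remaining integral is zero when $k=0$ (because $\partial_{t_e}q\equiv 0$) and reduces to $\int_e(I_K^gv-v)\dd s=0$ when $k=1$, the latter being the edge-moment reproduction property of the $(k+1)$-th order Lagrange interpolation.

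For \eqref{eq:commutingIcIslocal}, I would observe that both sides lie in $\mathbb P_1(K;\mathbb R^3)$: this is automatic for $\bs I_K^s(\curl\bs v)$ by definition, and for $\curl(\bs I_K^{gc}\bs v)$ it follows from the identity $\curl((\bs x-\bs x_K)\times\bs q)=(\bs x-\bs x_K)\div\bs q-((\bs x-\bs x_K)\cdot\nabla)\bs q-2\bs q$, which yields $\curl\bs W_k(K)\subset\mathbb P_1(K;\mathbb R^3)$. By unisolvence of the nonconforming linear element it then suffices to verify $\int_F\curl(\bs I_K^{gc}\bs v)\dd s=\int_F\curl\bs v\dd s$ on every $F\in\mathcal F(K)$, since the right-hand side equals $\int_F\bs I_K^s(\curl\bs v)\dd s$ by construction of $\bs I_K^s$. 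Splitting into tangential and normal parts, the tangential identity $\int_F\curl(\bs I_K^{gc}\bs v)\times\bs n\dd s=\int_F(\curl\bs v)\times\bs n\dd s$ is immediate from \eqref{gradcurlncfmdof2}, while for the normal part two applications of the Stokes-type identity \eqref{eq:20220207} combined with the edge degrees of freedom \eqref{gradcurlncfmdof1} tested against $q\equiv 1\in\mathbb P_k(e)$ give $\int_F\curl(\bs I_K^{gc}\bs v)\cdot\bs n\dd s=\sum_{e\in\mathcal E(F)}\int_e(\bs I_K^{gc}\bs v)\cdot\bs t_{F,e}\dd s=\sum_{e\in\mathcal E(F)}\int_e\bs v\cdot\bs t_{F,e}\dd s=\int_F\curl\bs v\cdot\bs n\dd s$.

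The main obstacle is the $k=1$ case of the edge-degree-of-freedom comparison in \eqref{eq:commutingIgIclocal}: the argument hinges on the Lagrange interpolant preserving the edge mean $\int_e v\dd s$, a property that is automatic for the moment-based formulation of the $(k+1)$-th order Lagrange element but deserves explicit emphasis. Once that is granted, both identities reduce to routine degree-of-freedom matching backed by the available unisolvence results.
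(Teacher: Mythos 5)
Your proof is correct and follows essentially the same route as the paper: both identities are reduced to matching degrees of freedom --- \eqref{gradcurlncfmdof1}--\eqref{gradcurlncfmdof2} for \eqref{eq:commutingIgIclocal}, and the face averages of the nonconforming linear element for \eqref{eq:commutingIcIslocal}, with the normal component handled via the Stokes-type identity \eqref{eq:20220207} --- followed by the relevant unisolvence. The only difference is that you make explicit the $k=1$ point that the quadratic Lagrange interpolant must reproduce edge means, which the paper's proof leaves implicit in the assertion $\int_e\partial_{t_e}(I_K^gv-v)q\dd s=0$.
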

\begin{proof}
On each edge $e\in\mathcal E(K)$, it follows from the definitions of $I_K^g$ and $\bs I_K^{gc}$ that
\[
\int_e\left(\nabla(I_K^gv)-\bs I_K^{gc}(\nabla v)\right)\cdot\bs t_e q\dd s=\int_e\partial_{t_e}(I_K^gv-v) q\dd s=0
\quad \forall~q\in \mathbb P_k(e).
\]
On each face $F\in\mathcal F(K)$, we have
\[
\int_F\curl\left(\nabla(I_K^gv)-\bs I_K^{gc}(\nabla v)\right)\times\bs n\dd s=\int_F\curl\left(\nabla(I_K^gv-v)\right)\times\bs n\dd s =\bs0.
\]
Hence \eqref{eq:commutingIgIclocal} holds from $\nabla(I_K^gv)-\bs I_K^{gc}(\nabla v)\in \bs W_k(K)$.

On the other hand, we get from the Stokes formula that
\begin{align*}
&\int_F\left(\curl(\bs I_K^{gc}\bs v)-\bs I_K^s(\curl \bs v)\right)\cdot\bs n\dd s=\int_F\curl(\bs I_K^{gc}\bs v-\bs v)\cdot\bs n\dd s \\
=&\int_F(\bs n\times\nabla)\cdot(\bs I_K^{gc}\bs v-\bs v)\dd s=\int_F\bs t_{F,e}\cdot(\bs I_K^{gc}\bs v-\bs v)\dd s=0.
\end{align*}
And by the definitions of $\bs I_K^{gc}$ and $\bs I_K^s$,
\[
\int_F\left(\curl(\bs I_K^{gc}\bs v)-\bs I_K^s(\curl \bs v)\right)\times\bs n\dd s = \int_F\curl(\bs I_K^{gc}\bs v-\bs v)\times\bs n\dd s=\bs0.
\]
Therefore \eqref{eq:commutingIcIslocal} follows from the last two identities.
\end{proof}

Now introduce the global version of $I_K^g$, $\bs I_K^{gc}$, $\bs I_K^s$ and $Q_K$.
Let $I_h^g: H^2(\Omega)\to\boldsymbol V_h^g$, $\boldsymbol I_h^{gc}: \bs H^1(\curl, \Omega)\to\boldsymbol W_h$, $\boldsymbol I_h^s: \boldsymbol{H}^1(\Omega; \mathbb{R}^3)\to\boldsymbol V_h^s$ and $I_h^{L^2}: L^2(\Omega)\to\mathcal Q_h$ be defined by $(I_h^g v)|_K:=I_K^g(v|_K)$, $(\boldsymbol  I_h^{gc}\boldsymbol  v)|_K:=\boldsymbol  I_K^{gc}(\boldsymbol  v|_K)$, $(\boldsymbol  I_h^s\boldsymbol  v)|_K:=\boldsymbol  I_K^s(\boldsymbol  v|_K)$  and $(I_h^{L^2}v)|_K:=Q_K(v|_K)$ for each $K\in\mathcal T_h$, respectively. 
As the direct result of \eqref{eq:commutingIsQlocal}, \eqref{eq:commutingIgIclocal} and \eqref{eq:commutingIcIslocal}, we have
\begin{equation}\label{eq:commutingIgIc}
\nabla(I_h^gv)=\bs I_h^{gc}(\nabla v)\quad \forall~v\in H^2(\Omega),
\end{equation}
\begin{equation}\label{eq:commutingIcIs}
\curl_h(\bs I_h^{gc}\bs v)=\bs I_h^s(\curl \bs v)\quad \forall~\bs v\in \bs H^1(\curl, \Omega),
\end{equation}
\begin{equation}\label{eq:commutingIsQ}
\div_h(\bs I_h^s\bs v)=I_h^{L^2}\div\bs v\quad\forall~\bs v\in \bs H^1(\Omega;\mathbb R^3).
\end{equation}

Combining \eqref{eq:commutingIgIc}-\eqref{eq:commutingIsQ} and the complex \eqref{eq:Stokescomplex3dncfem} yields the commutative diagram
\begin{equation*}
\begin{array}{c}
\xymatrix{
 \mathbb R \ar[r]^-{\subset} & H^2(\Omega)\ar[d]^{I_h^g} \ar[r]^-{\nabla} & \bs H^1(\curl, \Omega) \ar[d]^{\boldsymbol{I}_h^{gc}} \ar[r]^-{\curl}
                & \boldsymbol{H}^1(\Omega; \mathbb{R}^3) \ar[d]^{\boldsymbol{I}_h^s}   \ar[r]^-{\div} & \ar[d]^{I_h^{L^2}}L^2(\Omega) \ar[r]^{} & 0 \\
 \mathbb R \ar[r]^-{\subset} & V_h^g\ar[r]^-{\nabla} & \boldsymbol W_{h} \ar[r]^{\curl_h}
                &  \boldsymbol V_{h}^s   \ar[r]^{\div_h} &  \mathcal Q_h \ar[r]^{}& 0    }
\end{array},
\end{equation*}
and the commutative diagram with homogeneous boundary conditions
\begin{equation}\label{eq:stokescdncfem0}
\resizebox{0.9\hsize}{!}{$
\begin{array}{c}
\xymatrix{
 0 \ar[r]^-{\subset} & H_0^2(\Omega)\ar[d]^{I_h^g} \ar[r]^-{\nabla} & \bs H_0^1(\curl, \Omega) \ar[d]^{\boldsymbol{I}_h^{gc}} \ar[r]^-{\curl}
                & \boldsymbol{H}_0^1(\Omega; \mathbb{R}^3) \ar[d]^{\boldsymbol{I}_h^s}   \ar[r]^-{\div} & \ar[d]^{I_h^{L^2}}L_0^2(\Omega) \ar[r]^{} & 0 \\
 0 \ar[r]^-{\subset} & V_{h0}^g\ar[r]^-{\nabla} & \boldsymbol W_{h0} \ar[r]^{\curl_h}
                &  \boldsymbol V_{h0}^s   \ar[r]^{\div_h} &  \mathcal Q_{h0} \ar[r]^{}& 0    }
\end{array}.
$}
\end{equation}

\section{Mixed finite element methods for the quad-$\curl$ problem}\label{sec:mfem}

In this section, we will advance mixed finite element methods for the quad-$\curl$ problem
\begin{equation}\label{eq:quadcurl}
\left\{
\begin{aligned}
(\curl)^4\bs u&=\bs f\quad\quad\text{in }\Omega, \\
\div\bs u&=0\quad\quad\;\text{in }\Omega, \\
\bs u\times\bs n=(\curl\bs u)\times\bs n&=\bs 0\qquad\;\text{on }\partial\Omega,
\end{aligned}
\right.
\end{equation}
where $\bs f\in \bs H(\div, \Omega)$ with $\div\bs f=0$. 
The quad-curl problem arises in inverse electromagnetic scattering theory \cite{CakoniHaddar2007} and magnetohydrodynamics \cite{ZhengHuXu2011}.

Due to the identity $\curl^2\bs v=-\Delta\bs v+\nabla(\div\bs v)$ and the fact
\[
(\curl\bs u)\cdot\bs n=(\bs n\times\nabla)\cdot\bs u=(\bs n\times\nabla)\cdot(\bs n\times\bs u\times\bs n)=0\quad\text{on }\partial\Omega,
\]
the quad-$\curl$ problem \eqref{eq:quadcurl} is equivalent to
\begin{equation}\label{eq:quadcurlgrad}
\left\{
\begin{aligned}
-\curl\Delta\curl\bs u&=\bs f\quad\quad\text{in }\Omega, \\
\div\bs u&=0\quad\quad\,\text{in }\Omega, \\
\bs u\times\bs n=\curl\bs u&=\bs 0\qquad\text{on }\partial\Omega.
\end{aligned}
\right.
\end{equation}
Then a mixed formulation of the quad-$\curl$ problem \eqref{eq:quadcurl} is to find $\bs  u\in\bs H_0(\grad\curl, \Omega)$ and $\lambda\in H_0^1(\Omega)$ such that
\begin{align}
(\nabla\curl\bs u, \nabla\curl\bs v) + (\bs v, \nabla\lambda)& =(\bs f, \bs v) \quad\forall~\bs v\in \bs H_0(\grad\curl, \Omega),  \label{eq:quadcurlmixed1} \\
(\bs u, \nabla\mu)& =0 \qquad\quad\forall~\mu\in H_0^1(\Omega).  \label{eq:quadcurlmixed2}
\end{align}

Replacing $\bs v$ in \eqref{eq:quadcurlmixed1} with $\nabla\mu$ for any $\mu\in H_0^1(\Omega)$, we get $\lambda=0$ from the fact $\div\bs f=0$. Thus it follows from \eqref{eq:quadcurlmixed1} that
\[
(\nabla\curl\bs u, \nabla\curl\bs v)  =(\bs f, \bs v) \quad\forall~\bs v\in \bs H_0(\grad\curl, \Omega).
\]

\subsection{Mixed finite element methods}

Based on the mixed formulation \eqref{eq:quadcurlmixed1}-\eqref{eq:quadcurlmixed2}, we propose the mixed finite element methods for the  quad-$\curl$ problem \eqref{eq:quadcurl} as follows: 
find $\bs  u_h\in \boldsymbol W_{h0}$ and $\lambda_h\in V_{h0}^g$ such that
\begin{align}
(\nabla_h\curl_h\bs u_h, \nabla_h\curl_h\bs v_h) + (\bs v_h, \nabla\lambda_h)& =(\bs f, \bs v_h) \quad\forall~\bs v_h\in \boldsymbol W_{h0},  \label{eq:quadcurlmfem1} \\
(\bs u_h, \nabla\mu_h)& =0 \qquad\quad\;\;\forall~\mu_h\in V_{h0}^g.  \label{eq:quadcurlmfem2}
\end{align}

Now we show the well-posedness of the mixed finite element methods \eqref{eq:quadcurlmfem1}-\eqref{eq:quadcurlmfem2} and the stability. To this end, we recall the discrete de Rham complex and the corresponding interpolation operators \cite{Arnold2018}.
Based on the degrees of freedom \eqref{Nedelecdof1}, define $\bs I_K^c: \boldsymbol{H}^2(K; \mathbb{R}^3)\to \bs V_k^c(K)$ for each $K\in\mathcal T_h$  by
\[
\int_e\bs I_K^c\bs v\cdot\bs t_e q\dd s =\int_e\bs v\cdot\bs t_e q\dd s \quad \forall~\bs v\in\boldsymbol{H}^2(K; \mathbb{R}^3),  q\in\mathbb P_k(e), e\in\mathcal E(K).
\]
Then we have
\begin{equation}\label{eq:IKc1}
\bs I_K^c\bs v=\bs v\quad \forall~\bs v\in\bs V_k^c(K),
\end{equation}
\begin{equation}\label{eq:IKc2}
\|\curl(\bs v-\bs I_K^c\bs v)\|_{0, K}\lesssim h_K|\curl\bs v|_{1, K}\quad\forall~\bs v\in\boldsymbol{H}^2(K; \mathbb{R}^3),
\end{equation}
\begin{equation}\label{eq:IKc31}
\|\bs I_K^c\bs v\|_{0, K}\lesssim \|\bs v\|_{0, K}\quad\forall~\bs v\in\bs W_k(K).
\end{equation}
Let $\bs I_h^c: \boldsymbol{H}^2(\Omega; \mathbb{R}^3)+ \bs W_h\to \bs V_h^c$ be determined by
\[
(\bs I_h^c\bs v_h)|_K:=\bs I_K^c(\bs v_h|_K)\quad\forall~K\in\mathcal T_h.
\]
The operator $\bs I_h^c$ is well-defined, since the degrees of freedom \eqref{gradcurlncfmdof1} for $\bs W_k(K)$ and \eqref{Nedelecdof1} for $\bs V_k^c(K)$ are same. And we have $\bs I_h^c\bs v_h\in \bs V_{h0}^{c}:=\bs V_{h}^{c}\cap \bs H_0(\curl, \Omega)$ when $\bs v_h\in \bs W_{h0}^{c}$. 

 Let the lowest order Raviart-Thomas element space \cite{Nedelec1980,RaviartThomas1977} 
\[
\bs V_{h0}^d:=\{\bs v_h\in \bs H_0(\div, \Omega): \bs v_h|_K\in\mathbb P_{0}(K;\mathbb R^3)+ \bs x\mathbb P_{0}(K) \textrm{ for each } K\in\mathcal T_h\}.
\]
We have the discrete de Rham complex \cite{Arnold2018}
\[
0\xrightarrow{\subset} V_{h0}^g\xrightarrow{\nabla} \bs V_{h0}^{c} \xrightarrow{\curl} \bs V_{h0}^{d} \xrightarrow{\div}  \mathcal Q_{h0}\xrightarrow{}0.
\]
Denote by $\bs I_h^d: \boldsymbol{H}_0^1(\Omega; \mathbb{R}^3)+ \bs V_{h0}^s\to \bs V_{h0}^d$ the nodal interpolation operator.
Then the commutative diagram \eqref{eq:stokescdncfem0} can be extended to
the following three-line commutative diagram
\begin{equation}\label{eq:stokesdeRhamcddiscrete}
\resizebox{0.9\hsize}{!}{$
\begin{array}{c}
\xymatrix{
 0 \ar[r]^-{\subset} & H_0^2(\Omega)\ar[d]^{I_h^g} \ar[r]^-{\nabla} & \bs H_0^1(\curl, \Omega) \ar[d]^{\boldsymbol{I}_h^{gc}} \ar[r]^-{\curl}
                & \boldsymbol{H}_0^1(\Omega; \mathbb{R}^3) \ar[d]^{\boldsymbol{I}_h^s}   \ar[r]^-{\div} & \ar[d]^{I_h^{L^2}}L_0^2(\Omega) \ar[r]^{} & 0 \\
 0 \ar[r]^-{\subset} & V_{h0}^g\ar[d]^{I}\ar[r]^-{\nabla} & \boldsymbol W_{h0} \ar[d]^{\boldsymbol{I}_h^{c}}\ar[r]^{\curl_h}
                &  \boldsymbol V_{h0}^s \ar[d]^{\boldsymbol{I}_h^d}  \ar[r]^{\div_h} & \ar[d]^{I} \mathcal Q_{h0} \ar[r]^{}& 0   \\ 
 0 \ar[r]^-{\subset} & V_{h0}^g\ar[r]^-{\nabla} & \boldsymbol V_{h0}^c \ar[r]^{\curl}
                &  \boldsymbol V_{h0}^d   \ar[r]^{\div} &  \mathcal Q_{h0} \ar[r]^{}& 0    }
\end{array},
$}
\end{equation}
where $I$ is the identity operator.

\begin{lemma}
We have
\begin{equation}\label{eq:WkKprop1}
\inf_{q\in\mathbb P_{k+1}(K)}\|\bs v-\nabla q\|_{0,K}\lesssim h_K\|\curl\bs v\|_{0,K} \quad\forall~\bs v\in\bs W_k(K), K\in\mathcal T_h.
\end{equation}
\end{lemma}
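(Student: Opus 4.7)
The plan is to exploit the decomposition already established in Lemma~\ref{lem:WkKdecomp}, which states that any $\bs v\in\bs W_k(K)$ can be written as
\[
\bs v = \nabla q + \mathcal K_K(\curl \bs v)
\]
for some $q\in\mathbb P_{k+1}(K)$. This identity already exhibits a concrete polynomial $q\in\mathbb P_{k+1}(K)$ for which $\bs v - \nabla q$ is exactly the Poincar\'e potential $\mathcal K_K(\curl \bs v)$, so the infimum on the left-hand side is bounded by $\|\mathcal K_K(\curl \bs v)\|_{0,K}$.

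Next I would invoke the estimate \eqref{eq:poincareoperatorprop2} for the Poincar\'e operator, which says
\[
\|\mathcal K_K\boldsymbol p\|_{0,K}\lesssim h_K\|\boldsymbol p\|_{0,K}\quad\forall~\boldsymbol p\in\mathbb P_{1}(K;\mathbb R^3).
\]
Since $\bs W_k(K)\subset\mathbb P_2(K;\mathbb R^3)$ for $k=0,1$, we have $\curl\bs v\in\mathbb P_{1}(K;\mathbb R^3)$, so the estimate applies with $\boldsymbol p = \curl\bs v$ and yields $\|\mathcal K_K(\curl\bs v)\|_{0,K}\lesssim h_K\|\curl\bs v\|_{0,K}$.

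Chaining these two observations gives
\[
\inf_{q\in\mathbb P_{k+1}(K)}\|\bs v-\nabla q\|_{0,K}\leq \|\mathcal K_K(\curl \bs v)\|_{0,K}\lesssim h_K\|\curl\bs v\|_{0,K},
\]
which is the desired bound. There is no real obstacle here: the entire content of the lemma is already packaged inside Lemma~\ref{lem:WkKdecomp} together with the inverse-inequality bound on $\mathcal K_K$, and the only thing to check is that $\curl\bs v$ lies in $\mathbb P_1(K;\mathbb R^3)$ so that \eqref{eq:poincareoperatorprop2} is applicable, which is immediate from the definition of $\bs W_k(K)$.
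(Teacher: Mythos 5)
Your proposal is correct and follows essentially the same route as the paper: the paper's proof also takes $q$ with $\bs v-\mathcal K_K(\curl\bs v)=\nabla q$ (i.e.\ the decomposition \eqref{eq:WkKdecomp}) and then applies \eqref{eq:poincareoperatorprop2} to $\mathcal K_K(\curl\bs v)$. The only cosmetic difference is that you cite Lemma~\ref{lem:WkKdecomp} directly while the paper re-derives the same decomposition from \eqref{eq:poincareoperatorprop1}.
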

\begin{proof}
Due to \eqref{eq:poincareoperatorprop1}, $\bs v-\mathcal K_K(\curl\boldsymbol v)\in\bs W_k(K)\cap\ker(\curl)$, which means $\bs v-\mathcal K_K(\curl\boldsymbol v)\in\nabla\mathbb P_{k+1}(K)$. Choose $q\in \mathbb P_{k+1}(K)$ such that $\bs v-\mathcal K_K(\curl\boldsymbol v)=\nabla q$. Apply \eqref{eq:poincareoperatorprop2} to get
$$
\|\bs v-\nabla q\|_{0,K}=\|\mathcal K_K(\curl\boldsymbol v)\|_{0,K}\lesssim h_K\|\curl\bs v\|_{0,K},
$$
which indicates \eqref{eq:WkKprop1}.
\end{proof}

\begin{lemma}
It holds for any $K\in\mathcal T_h$ that
\begin{equation}\label{eq:IKc3}
\|\bs v-\bs I_K^c\bs v\|_{0,K}\lesssim h_K\|\curl\bs v\|_{0,K} \quad\forall~\bs v\in\bs W_k(K).
\end{equation}
\end{lemma}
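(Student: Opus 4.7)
The plan is to combine the polynomial approximation result \eqref{eq:WkKprop1} with the $L^2$-stability \eqref{eq:IKc31} and the invariance property \eqref{eq:IKc1} of $\bs I_K^c$, using a standard Bramble-Hilbert style triangle-inequality argument.

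First, given $\bs v \in \bs W_k(K)$, I would invoke \eqref{eq:WkKprop1} to choose $q \in \mathbb P_{k+1}(K)$ such that
\[
\|\bs v - \nabla q\|_{0,K} \lesssim h_K \|\curl \bs v\|_{0,K}.
\]
The key observation is that $\nabla q \in \bs V_k^c(K)$: indeed, $\nabla \mathbb P_{k+1}(K) \subset \mathbb P_k(K;\mathbb R^3) \subset \bs V_k^c(K)$ for both $k=0$ and $k=1$. Consequently \eqref{eq:IKc1} yields $\bs I_K^c(\nabla q) = \nabla q$, so by linearity
\[
\bs v - \bs I_K^c \bs v = (\bs v - \nabla q) - \bs I_K^c(\bs v - \nabla q).
\]

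Next I would note that $\bs v - \nabla q \in \bs W_k(K)$ (since $\bs W_k(K)$ contains $\nabla \mathbb P_{k+1}(K)$ by construction), so the stability bound \eqref{eq:IKc31} applies to $\bs v - \nabla q$. The triangle inequality then gives
\[
\|\bs v - \bs I_K^c \bs v\|_{0,K} \leq \|\bs v - \nabla q\|_{0,K} + \|\bs I_K^c(\bs v - \nabla q)\|_{0,K} \lesssim \|\bs v - \nabla q\|_{0,K} \lesssim h_K \|\curl \bs v\|_{0,K},
\]
which is exactly \eqref{eq:IKc3}.

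There is no real obstacle here: the entire proof is three lines once one has \eqref{eq:WkKprop1}, \eqref{eq:IKc1}, and \eqref{eq:IKc31} at hand. The only point that needs momentary care is confirming the inclusion $\nabla \mathbb P_{k+1}(K) \subset \bs V_k^c(K)$ so that the interpolant reproduces $\nabla q$ and the error reduces purely to the polynomial approximation error from \eqref{eq:WkKprop1}.
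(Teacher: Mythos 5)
Your proposal is correct and follows essentially the same route as the paper: both use the identity $\bs v-\bs I_K^c\bs v=(\bs v-\nabla q)-\bs I_K^c(\bs v-\nabla q)$ via \eqref{eq:IKc1} and $\nabla\mathbb P_{k+1}(K)\subset \bs V_k^c(K)$, then the stability bound \eqref{eq:IKc31} and the approximation estimate \eqref{eq:WkKprop1}. The only cosmetic difference is that the paper keeps $q$ arbitrary and passes to the infimum before invoking \eqref{eq:WkKprop1}, whereas you fix the near-optimal $q$ at the start.
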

\begin{proof}
Employing \eqref{eq:IKc1} and $\nabla\mathbb P_{k+1}(K)\subset \bs V_k^c(K)$, it follows
\[
\bs v-\bs I_K^c\bs v=(\bs v-\nabla q)-\bs I_K^c(\bs v-\nabla q)\quad\forall~q\in\mathbb P_{k+1}(K).
\]
Then we get from \eqref{eq:IKc31} that
$$
\|\bs v-\bs I_K^c\bs v\|_{0,K}\leq \|\bs v-\nabla q\|_{0,K}+\|\bs I_K^c(\bs v-\nabla q)\|_{0,K}\lesssim \|\bs v-\nabla q\|_{0,K},
$$
which together with the arbitrariness of $q\in\mathbb P_{k+1}(K)$ implies
\[
\|\bs v-\bs I_K^c\bs v\|_{0,K}\lesssim \inf_{q\in\mathbb P_{k+1}(K)}\|\bs v-\nabla q\|_{0,K}.
\]
Thus the inequality \eqref{eq:IKc3} follows from \eqref{eq:WkKprop1}.
\end{proof}

\begin{lemma}
It holds the discrete Poincar\'e inequality
\begin{equation}\label{eq:Ihc}
\|\bs v_h\|_{0}\lesssim \|\curl_h\bs v_h\|_{0} \quad\forall~\bs v_h\in\mathcal K_h^d,
\end{equation}
where $\mathcal K_h^d:=\{\bs v_h\in\bs W_{h0}: (\bs v_h, \nabla q_h)=0 \textrm{ for each } q_h\in V_{h0}^g\}$.
\end{lemma}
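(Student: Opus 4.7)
The strategy is to reduce the desired inequality to the classical discrete Poincar\'e inequality on the lowest-order N\'ed\'elec space $\bs V_{h0}^c$ by comparing $\bs v_h$ with its N\'ed\'elec interpolant $\bs w_h:=\bs I_h^c\bs v_h\in\bs V_{h0}^c$, using the middle-to-bottom vertical arrows of \eqref{eq:stokesdeRhamcddiscrete}. The key observation is that $\bs w_h$ differs from $\bs v_h$ only by a term of size $h$ (in both $L^2$ and in curl), and although $\bs w_h$ need not itself be discretely divergence-free in the N\'ed\'elec sense, the gradient correction needed to make it so is again of order $h$, thanks to the orthogonality defining $\mathcal K_h^d$.

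First, applying \eqref{eq:IKc3} elementwise yields $\|\bs v_h-\bs w_h\|_0\lesssim h\|\curl_h\bs v_h\|_0$. Since $\bs v_h-\bs I_K^c\bs v_h\in\bs W_k(K)$, the inverse inequality combined with \eqref{eq:IKc3} gives $\|\curl(\bs v_h-\bs w_h)\|_{0,K}\lesssim h_K^{-1}\|\bs v_h-\bs I_K^c\bs v_h\|_{0,K}\lesssim\|\curl\bs v_h\|_{0,K}$, and hence $\|\curl\bs w_h\|_0\lesssim\|\curl_h\bs v_h\|_0$.

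Next, let $p_h\in V_{h0}^g$ be the unique solution of $(\nabla p_h,\nabla q_h)=(\bs w_h,\nabla q_h)$ for all $q_h\in V_{h0}^g$, so that $\bs w_h-\nabla p_h\in\bs V_{h0}^c$ is $L^2$-orthogonal to $\nabla V_{h0}^g$. The classical discrete Poincar\'e inequality for the N\'ed\'elec element then gives
\[
\|\bs w_h-\nabla p_h\|_0\lesssim\|\curl(\bs w_h-\nabla p_h)\|_0=\|\curl\bs w_h\|_0\lesssim\|\curl_h\bs v_h\|_0.
\]
To control $\nabla p_h$, note that $\nabla V_{h0}^g\subset\bs V_{h0}^c\subset\bs W_{h0}$: the first inclusion is the bottom row of \eqref{eq:stokesdeRhamcddiscrete}, and the second follows from $\bs V_k^c(K)\subset\bs W_k(K)$ together with the coincidence of edge degrees of freedom \eqref{gradcurlncfmdof1}=\eqref{Nedelecdof1} and the vanishing of the face degrees of freedom \eqref{gradcurlncfmdof2} on gradients. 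Consequently $(\bs v_h,\nabla p_h)=0$ by the definition of $\mathcal K_h^d$, and testing the defining equation for $p_h$ with $q_h=p_h$ produces
\[
\|\nabla p_h\|_0^2=(\bs w_h,\nabla p_h)=(\bs w_h-\bs v_h,\nabla p_h)\leq\|\bs w_h-\bs v_h\|_0\,\|\nabla p_h\|_0,
\]
whence $\|\nabla p_h\|_0\lesssim h\|\curl_h\bs v_h\|_0$. A final triangle inequality $\|\bs v_h\|_0\leq\|\bs v_h-\bs w_h\|_0+\|\bs w_h-\nabla p_h\|_0+\|\nabla p_h\|_0$ concludes \eqref{eq:Ihc}.

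The main step worth flagging is the invocation of the standard discrete Poincar\'e inequality on $\bs V_{h0}^c$ restricted to the discretely divergence-free subspace; this is classical in finite element exterior calculus and should be cited cleanly. Everything else is routine once one notices that the same gradient subspace $\nabla V_{h0}^g$ sits inside both $\bs V_{h0}^c$ and $\bs W_{h0}$, so the orthogonality condition defining $\mathcal K_h^d$ transfers to the auxiliary potential $p_h$.
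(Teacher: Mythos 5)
Your proof is correct, but it follows a genuinely different route from the paper. You split off the hard analytic content into the classical discrete Poincar\'e inequality for the N\'ed\'elec space on its discretely divergence-free subspace, after building that subspace explicitly via the discrete potential $p_h\in V_{h0}^g$ with $(\nabla p_h,\nabla q_h)=(\bs I_h^c\bs v_h,\nabla q_h)$; the orthogonality defining $\mathcal K_h^d$ then shows $\|\nabla p_h\|_0\lesssim\|\bs v_h-\bs I_h^c\bs v_h\|_0\lesssim h\|\curl_h\bs v_h\|_0$, and the perturbation bounds from \eqref{eq:IKc3} and the inverse inequality do the rest. The paper does not cite that classical inequality but essentially re-proves it inline: it takes a continuous regular potential $\bs\psi\in\bs H_0^1(\Omega;\mathbb R^3)$ of $\curl(\bs I_h^c\bs v_h)$, pushes it back to $\bs V_{h0}^c$ with the $L^2$-bounded commuting Christiansen--Winther projection $\widetilde{\bs I}_h^c$, and uses the exactness \eqref{eq:deRhamexact1} to write $\bs I_h^c\bs v_h-\widetilde{\bs I}_h^c\bs\psi=\nabla q_h$, after which the same orthogonality $(\bs v_h,\nabla q_h)=0$ is exploited directly, without introducing a discrete Helmholtz correction. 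So both arguments hinge on the identical ingredients (the estimates \eqref{eq:IKc2}--\eqref{eq:IKc3} and the test functions $\nabla V_{h0}^g$ allowed by $\mathcal K_h^d$); yours is more modular and shorter if the edge-element Poincar\'e inequality is accepted as a citable black box, while the paper's is self-contained and makes explicit exactly which mesh assumptions and projection properties the constant depends on. One cosmetic slip: your chain $\nabla V_{h0}^g\subset\bs V_{h0}^c\subset\bs W_{h0}$ overclaims, since a general N\'ed\'elec function does not have single-valued face moments \eqref{gradcurlncfmdof2} (the tangential trace of its curl may jump), so $\bs V_{h0}^c\not\subset\bs W_{h0}$; but this is harmless, because the orthogonality $(\bs v_h,\nabla p_h)=0$ you need follows directly from $p_h\in V_{h0}^g$ and the definition of $\mathcal K_h^d$, and the inclusion actually used, $\nabla V_{h0}^g\subset\bs W_{h0}$, is exactly what your parenthetical justification establishes.
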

\begin{proof}
By the fact that $\bs I_h^c\bs v_h\in\bs H_0(\curl, \Omega)$, there exists $\bs\psi\in \bs H_0^1(\Omega;\mathbb R^3)$ such that (cf. \cite{GiraultRaviart1986,AmroucheBernardiDaugeGirault1998,CostabelMcIntosh2010})
\begin{equation}\label{eq:20200719-1}
\curl\bs\psi=\curl(\bs I_h^c\bs v_h), \quad \|\bs\psi\|_1\lesssim \|\curl(\bs I_h^c\bs v_h)\|_0.
\end{equation}
Let $\widetilde{\bs I}_h^c:\bs H_0(\curl, \Omega)\to \bs V_{h0}^{c}$ and $\widetilde{\bs I}_h^d:\bs H_0(\div, \Omega)\to \bs V_{h0}^{d}$ be the $L^2$ bounded projection operators devised in~\cite{ChristiansenWinther2008}.
The operators $\widetilde{\bs I}_h^c$ and $\widetilde{\bs I}_h^d$ possess the following properties
\begin{equation}\label{eq:20200719-2}
\curl(\widetilde{\bs I}_h^c\bs v)=\widetilde{\bs I}_h^d(\curl\bs v), \quad 
\|\widetilde{\bs I}_h^c\bs v\|_0\lesssim \|\bs v\|_0\quad\forall~\bs v\in \bs H_0(\curl, \Omega).
\end{equation}
By the commuting property of $\widetilde{\bs I}_h^c$ and $\widetilde{\bs I}_h^d$, it follows
\[
\curl(\widetilde{\bs I}_h^c\bs\psi)=\widetilde{\bs I}_h^d(\curl\bs\psi)=\widetilde{\bs I}_h^d(\curl(\bs I_h^c\bs v_h))=\curl(\bs I_h^c\bs v_h).
\]
By \eqref{eq:deRhamexact1}, there exists $q_h\in V_{h0}^g$ such that $\bs I_h^c\bs v_h-\widetilde{\bs I}_h^c\bs\psi=\nabla q_h$.
Because $({\bs v}_h,\nabla q_h) =0$, 
\begin{align*}
\|\bs I_h^c\bs v_h\|_0^2&=(\bs I_h^c\bs v_h, \bs I_h^c\bs v_h-\widetilde{\bs I}_h^c\bs\psi) + (\bs I_h^c\bs v_h, \widetilde{\bs I}_h^c\bs\psi) \\
&=(\bs I_h^c\bs v_h-\bs v_h, \bs I_h^c\bs v_h-\widetilde{\bs I}_h^c\bs\psi) + (\bs I_h^c\bs v_h, \widetilde{\bs I}_h^c\bs\psi).
\end{align*}
Due to \eqref{eq:20200719-2} and \eqref{eq:20200719-1}, we get
\begin{align*}
\|\bs I_h^c\bs v_h\|_0^2
&\leq\|\bs I_h^c\bs v_h-\bs v_h\|_0\|\bs I_h^c\bs v_h-\widetilde{\bs I}_h^c\bs\psi\|_0 + \|\bs I_h^c\bs v_h\|_0\|\widetilde{\bs I}_h^c\bs\psi\|_0 \\
&\lesssim\|\bs I_h^c\bs v_h-\bs v_h\|_0(\|\bs I_h^c\bs v_h\|_0+\|\bs\psi\|_1) + \|\bs I_h^c\bs v_h\|_0\|\bs\psi\|_1 \\
&\lesssim\|\bs I_h^c\bs v_h-\bs v_h\|_0(\|\bs I_h^c\bs v_h\|_0+\|\curl(\bs I_h^c\bs v_h)\|_0) + \|\bs I_h^c\bs v_h\|_0\|\|\curl(\bs I_h^c\bs v_h)\|_0 \\
&=(\|\bs v_h-\bs I_h^c\bs v_h\|_0+\|\curl(\bs I_h^c\bs v_h)\|_0)\|\bs I_h^c\bs v_h\|_0 \\
&\quad +\|\bs v_h-\bs I_h^c\bs v_h\|_0\|\curl(\bs I_h^c\bs v_h)\|_0.
\end{align*}
Thus we have
\[
\|\bs I_h^c\bs v_h\|_0\lesssim \|\bs v_h-\bs I_h^c\bs v_h\|_0 + \|\curl(\bs I_h^c\bs v_h)\|_0,
\]
which indicates
\begin{align*}
\|\bs v_h\|_0&\lesssim \|\bs v_h-\bs I_h^c\bs v_h\|_0 + \|\curl(\bs I_h^c\bs v_h)\|_0 \\
&\lesssim \|\bs v_h-\bs I_h^c\bs v_h\|_0 + \|\curl(\bs v_h-\bs I_h^c\bs v_h)\|_0 + \|\curl_h\bs v_h\|_0.
\end{align*}
Therefore \eqref{eq:Ihc} follows from \eqref{eq:IKc3}, \eqref{eq:IKc2} and the inverse inequality.
\end{proof}

\begin{lemma}
We have the discrete stability
\begin{align}
&\|\widetilde{\bs u}_h\|_{H_h(\grad\curl)}+ |\widetilde \lambda_h|_1 \notag\\
\lesssim &\sup_{(\bs v_h, \mu_h)\in\boldsymbol W_{h0}\times V_{h0}^g}\frac{(\nabla_h\curl_h\widetilde{\bs u}_h, \nabla_h\curl_h\bs v_h) + (\bs v_h, \nabla\widetilde \lambda_h)+(\widetilde{\bs u}_h, \nabla \mu_h)}{\|\bs v_h\|_{H_h(\grad\curl)}+ |\mu_h|_1} \label{eq:mfemstab}
\end{align}
for any $\widetilde{\bs u}_h\in \boldsymbol W_{h0}$ and $\widetilde{\lambda}_h\in V_{h0}^g$.
\end{lemma}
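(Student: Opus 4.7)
The plan is to follow the standard Brezzi framework for saddle-point problems: establish coercivity of the leading bilinear form on the discrete kernel of $b(\bs v_h,\mu_h):=(\bs v_h,\nabla\mu_h)$, together with the discrete inf-sup condition for $b$, and then combine these two into \eqref{eq:mfemstab} by exhibiting an explicit composite test function.

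For the coercivity on the kernel, I would observe that the kernel of $b(\cdot,\cdot)$ in $\bs W_{h0}$ is exactly $\mathcal K_h^d$. For any $\bs v_h\in\mathcal K_h^d$, the discrete Poincar\'e inequality \eqref{eq:Ihc} yields $\|\bs v_h\|_0\lesssim\|\curl_h\bs v_h\|_0$; since $\curl_h\bs v_h\in\bs V_{h0}^s$, the norm equivalence \eqref{eq:normequivVsh} further gives $\|\curl_h\bs v_h\|_0\lesssim|\curl_h\bs v_h|_{1,h}$, so
\[
\|\bs v_h\|_{H_h(\grad\curl)}^2 \lesssim \|\nabla_h\curl_h\bs v_h\|_0^2 \qquad\forall\,\bs v_h\in\mathcal K_h^d.
\]
The discrete inf-sup for $b$ is immediate: for $\widetilde\lambda_h\in V_{h0}^g$, take $\bs v_h=\nabla\widetilde\lambda_h\in\bs W_{h0}$, using $\nabla V_{h0}^g\subset\bs W_{h0}$. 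Then $\curl_h\bs v_h=\bs 0$, $\|\bs v_h\|_{H_h(\grad\curl)}=|\widetilde\lambda_h|_1$, and $(\bs v_h,\nabla\widetilde\lambda_h)=|\widetilde\lambda_h|_1^2$.

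To assemble the full estimate, I would use the $L^2$-orthogonal decomposition $\bs W_{h0}=\mathcal K_h^d\oplus\nabla V_{h0}^g$ implied by the definition of $\mathcal K_h^d$ to split $\widetilde{\bs u}_h=\widetilde{\bs u}_h^0+\nabla w_h$ with $\widetilde{\bs u}_h^0\in\mathcal K_h^d$ and $w_h\in V_{h0}^g$, and then test the numerator in \eqref{eq:mfemstab} with
\[
\bs v_h=\widetilde{\bs u}_h^0+\beta\,\nabla\widetilde\lambda_h,\qquad \mu_h=w_h,
\]
for a small but fixed $\beta>0$. Thanks to $\curl_h\nabla w_h=\curl_h\nabla\widetilde\lambda_h=\bs 0$ and the $L^2$-orthogonality $(\widetilde{\bs u}_h^0,\nabla\widetilde\lambda_h)=0$, the numerator reduces to $\|\nabla_h\curl_h\widetilde{\bs u}_h\|_0^2+\beta|\widetilde\lambda_h|_1^2+|w_h|_1^2$. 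Applying the kernel coercivity to $\widetilde{\bs u}_h^0$ and the orthogonality-induced equivalence $\|\widetilde{\bs u}_h\|_{H_h(\grad\curl)}^2\eqsim\|\widetilde{\bs u}_h^0\|_{H_h(\grad\curl)}^2+|w_h|_1^2$ (which follows by combining the $L^2$-orthogonal splitting with the norm equivalence $\|\bs v_h\|_{H_h(\grad\curl)}\eqsim\|\bs v_h\|_0+|\curl_h\bs v_h|_{1,h}$ recorded immediately after \eqref{eq:normequivVsh}), this lower bound dominates $(\|\widetilde{\bs u}_h\|_{H_h(\grad\curl)}+|\widetilde\lambda_h|_1)^2$, while a triangle-inequality estimate bounds the denominator $\|\bs v_h\|_{H_h(\grad\curl)}+|\mu_h|_1$ by the same quantity, yielding \eqref{eq:mfemstab}.

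The chief obstacle, namely the discrete Poincar\'e inequality \eqref{eq:Ihc}, is already in hand, having been proved via the N\'ed\'elec interpolant $\bs I_h^c$ and a regular decomposition in $\bs H_0^1(\Omega;\mathbb R^3)$. What remains is bookkeeping, the only delicate points being to exploit the $L^2$-orthogonal splitting $\bs W_{h0}=\mathcal K_h^d\oplus\nabla V_{h0}^g$ so that the cross term $b(\widetilde{\bs u}_h^0,\widetilde\lambda_h)$ vanishes, and to choose $\beta$ small (independent of $h$) so that the $\beta\,\nabla\widetilde\lambda_h$ perturbation in the test function does not spoil the coercive lower bound coming from $\widetilde{\bs u}_h^0$.
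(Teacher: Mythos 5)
Your proposal is correct and follows essentially the same route as the paper: coercivity on the discrete kernel $\mathcal K_h^d$ from the discrete Poincar\'e inequality \eqref{eq:Ihc} combined with the norm equivalence \eqref{eq:normequivVsh}, and the discrete inf-sup condition obtained by testing with $\nabla\widetilde\lambda_h\in\nabla V_{h0}^g\subset\bs W_{h0}$. The only difference is that the paper concludes by citing the Babu\v{s}ka--Brezzi theory, whereas you carry out that standard assembly explicitly via the $L^2$-orthogonal splitting $\bs W_{h0}=\mathcal K_h^d\oplus\nabla V_{h0}^g$ and a composite test function (where, in fact, since all cross terms vanish, no smallness of $\beta$ is needed); this is a correct, if slightly more verbose, rendering of the same argument.
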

\begin{proof}
For any $\bs v_h\in\mathcal K_h^d$, by using \eqref{eq:Ihc} and \eqref{eq:normequivVsh}, we derive the coercivity 
\[
\|\bs v_h\|_{H_h(\grad\curl)}\lesssim \|\curl_h\bs v_h\|_{1,h}\lesssim |\curl_h\bs v_h|_{1,h}.
\]
Since $\nabla V_{h0}^g\subset \boldsymbol W_{h0}$, it holds the discrete inf-sup condition
\[
|\mu_h|_1=\sup_{\bs v_h\in\nabla V_{h0}^g}\frac{(\bs v_h, \nabla \mu_h)}{\|\bs v_h\|_0}=\sup_{\bs v_h\in\nabla V_{h0}^g}\frac{(\bs v_h, \nabla\mu_h)}{\|\bs v_h\|_{H_h(\grad\curl)}} \leq \sup_{\bs v_h\in\boldsymbol W_{h0}}\frac{(\bs v_h, \nabla\mu_h)}{\|\bs v_h\|_{H_h(\grad\curl)}}.
\]
Thus the discrete stability \eqref{eq:mfemstab} follows from the Babu\v{s}ka-Brezzi theory~\cite{BoffiBrezziFortin2013}.
\end{proof}

Thanks to the discrete stability \eqref{eq:mfemstab}, the mixed finite element method \eqref{eq:quadcurlmfem1}-\eqref{eq:quadcurlmfem2} are well-posed.
As the continuous case, 
replacing $\bs v_h$ in \eqref{eq:quadcurlmfem1} with $\nabla\mu_h$ for any $\mu_h\in V_{h0}^g$, we get $\lambda_h=0$ from the fact $\div\bs f=0$ again. As a result, the solution $\bs u_h\in \boldsymbol W_{h0}$ satisfies
\begin{equation}\label{eq:quadcurlmfem1reduce}
(\nabla_h\curl_h\bs u_h, \nabla_h\curl_h\bs v_h) =(\bs f, \bs v_h) \quad\forall~\bs v_h\in \boldsymbol W_{h0}.
\end{equation}

\subsection{Interpolation operator with lower regularity}
In this subsection we define an interpolation operator on $\bs H_0(\grad\curl, \Omega)$.
Since the interpolation operator $\boldsymbol I_h^{gc}$ is not well-defined on $\bs H_0(\grad\curl, \Omega)$,
we first present a regular decomposition for the space $\bs H_0(\grad\curl, \Omega)$.
\begin{lemma}
It holds the stable regular decomposition
\begin{equation}\label{eq:gradcurlregulardecomp}
\bs H_0(\grad\curl, \Omega)=\bs H_0^2(\Omega;\mathbb R^3)+\nabla H_0^1(\Omega).
\end{equation}
Specifically, for any $\bs v\in\bs H_0(\grad\curl, \Omega)$, let $\bs v_2\in \bs H_0^2(\Omega;\mathbb R^3)$ and $\boldsymbol\lambda\in\curl\bs H_0^2(\Omega;\mathbb R^3)$ satisfy
\begin{equation}\label{eq:H2stokes3d}
\left\{
\begin{aligned}
(\nabla^2\bs v_2, \nabla^2\bs\chi) + (\nabla\curl\bs\chi, \nabla\boldsymbol\lambda)& =0 \qquad\qquad\qquad\;\;\forall~\bs\chi\in \bs H_0^2(\Omega;\mathbb R^3),  \\
(\nabla\curl\bs v_2, \nabla\boldsymbol\mu)& =(\nabla\curl\bs v, \nabla\boldsymbol\mu) \quad\forall~\boldsymbol{\mu}\in \curl\bs H_0^2(\Omega;\mathbb R^3), 
\end{aligned}
\right.
\end{equation}
then there exists $v_1\in H_0^1(\Omega)$ such that
$\bs v=\bs v_2+\nabla v_1$ and
\begin{equation}\label{eq:v1v2regular}
\|\bs v_2\|_2\lesssim |\curl \bs v|_1,\quad \|v_1\|_1\lesssim \|\bs v\|_0 + \|\bs v_2\|_0.
\end{equation}
\end{lemma}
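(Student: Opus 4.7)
The plan is to apply Brezzi's saddle-point theory to the mixed problem \eqref{eq:H2stokes3d}, and then use exactness of the de Rham complex on the contractible domain $\Omega$ to extract the pure-gradient remainder $\nabla v_1$. The single ingredient doing all of the work is a stable regular vector potential: on a contractible Lipschitz domain, the operator $\curl:\bs H_0^2(\Omega;\mathbb R^3)\to \bs H_0^1(\Omega;\mathbb R^3)\cap\ker(\div)$ is surjective and admits a stable right inverse, i.e.\ for every $\boldsymbol\eta\in\bs H_0^1\cap\ker(\div)$ there exists $\bs\phi\in\bs H_0^2$ with $\curl\bs\phi=\boldsymbol\eta$ and $\|\bs\phi\|_2\lesssim\|\boldsymbol\eta\|_1\lesssim|\boldsymbol\eta|_1$ by Poincar\'e. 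This is the ``regular potential'' version of the classical vector potential theorem; it is the $\bs H^1$-level exactness of the homogeneous Stokes complex in the continuous setting and may be invoked from Costabel--McIntosh type results already cited in the paper. In particular, since $\curl\bs v\in\bs H_0^1\cap\ker(\div)$ for any $\bs v\in\bs H_0(\grad\curl,\Omega)$, this immediately identifies $\curl\bs v$ as an element of $\curl\bs H_0^2$.

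Armed with the right inverse, I would verify Brezzi's conditions for the problem \eqref{eq:H2stokes3d} on $\bs H_0^2(\Omega;\mathbb R^3)\times\curl\bs H_0^2(\Omega;\mathbb R^3)$. Coercivity of $(\nabla^2\cdot,\nabla^2\cdot)$ on $\bs H_0^2$ is Poincar\'e, and the inf-sup condition for the off-diagonal form $b(\bs\chi,\boldsymbol\mu)=(\nabla\curl\bs\chi,\nabla\boldsymbol\mu)$ is exactly the stable right inverse: given $\boldsymbol\mu\in\curl\bs H_0^2$, choose $\bs\phi\in\bs H_0^2$ with $\curl\bs\phi=\boldsymbol\mu$ and $\|\bs\phi\|_2\lesssim|\boldsymbol\mu|_1$, so that $b(\bs\phi,\boldsymbol\mu)=|\boldsymbol\mu|_1^2\gtrsim\|\bs\phi\|_2\,|\boldsymbol\mu|_1$. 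Brezzi theory then furnishes a unique solution $(\bs v_2,\boldsymbol\lambda)$ with $\|\bs v_2\|_2+|\boldsymbol\lambda|_1\lesssim|\curl\bs v|_1$, giving the first bound in \eqref{eq:v1v2regular}.

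Next I would show $\curl\bs v_2=\curl\bs v$. Since $\curl\bs v_2\in\curl\bs H_0^2$ trivially and $\curl\bs v\in\curl\bs H_0^2$ by the first paragraph, the element $\boldsymbol\mu:=\curl\bs v_2-\curl\bs v$ is an admissible test function in the second equation of \eqref{eq:H2stokes3d}, yielding $|\curl\bs v_2-\curl\bs v|_1^2=0$; since the difference lies in $\bs H_0^1(\Omega;\mathbb R^3)$, a vanishing $H^1$-seminorm forces it to be zero. Thus $\bs v-\bs v_2\in\bs H_0(\curl,\Omega)\cap\ker(\curl)$, and exactness of the de Rham complex with homogeneous boundary condition on the contractible $\Omega$ produces $v_1\in H_0^1(\Omega)$ with $\nabla v_1=\bs v-\bs v_2$. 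A final application of Poincar\'e gives $\|v_1\|_1\lesssim|v_1|_1=\|\nabla v_1\|_0=\|\bs v-\bs v_2\|_0\le \|\bs v\|_0+\|\bs v_2\|_0$, completing \eqref{eq:v1v2regular} and the decomposition \eqref{eq:gradcurlregulardecomp}.

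The main obstacle in this plan is clearly the $\bs H^2$-stable right inverse of $\curl$ with homogeneous boundary conditions, which underlies both the inf-sup bound and the assertion $\curl\bs v\in\curl\bs H_0^2$. Everything else reduces to Brezzi theory plus the already-cited exactness of the homogeneous de Rham complex on a contractible domain.
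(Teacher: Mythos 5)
Your proposal is correct and follows essentially the same route as the paper: the inf-sup condition for $b(\bs\chi,\boldsymbol\mu)=(\nabla\curl\bs\chi,\nabla\boldsymbol\mu)$ is obtained from the stable potential $\curl\bs\phi=\boldsymbol\mu$, $\|\bs\phi\|_2\lesssim|\boldsymbol\mu|_1$, provided by the exactness of the homogeneous de Rham complex $H_0^3\xrightarrow{\nabla}\bs H_0^2\xrightarrow{\curl}\bs H_0^1\xrightarrow{\div}L_0^2$ on the contractible domain (Costabel--McIntosh), Babu\v{s}ka--Brezzi theory then gives $\|\bs v_2\|_2\lesssim|\curl\bs v|_1$ and $\curl\bs v_2=\curl\bs v$, and the gradient part $\nabla v_1=\bs v-\bs v_2$ with the Poincar\'e bound finishes the argument. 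Your extra details (explicitly checking $\curl\bs v\in\curl\bs H_0^2(\Omega;\mathbb R^3)$ so that $\curl\bs v_2-\curl\bs v$ is an admissible test function, and the coercivity of the $(\nabla^2\cdot,\nabla^2\cdot)$ form) are points the paper leaves implicit, but they do not change the approach.
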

\begin{proof}
Recall the de Rham complex with homogeneous boundary conditions \cite[the second part of Theorem 1.1]{CostabelMcIntosh2010}
\[
0\xrightarrow{\subset} H_0^3(\Omega)\xrightarrow{\nabla} \bs H_0^2(\Omega;\mathbb R^3) \xrightarrow{\curl} \boldsymbol H_0^1(\Omega; \mathbb{R}^3) \xrightarrow{\div} L_0^{2}(\Omega)\xrightarrow{}0,
\]
which is exact for $\Omega$ being contractible. For $\boldsymbol{\mu}\in \curl\bs H_0^2(\Omega;\mathbb R^3)\subset\bs H_0^1(\Omega;\mathbb R^3)$, by this complex there exists $\bs w\in \bs H_0^2(\Omega;\mathbb R^3)$ satisfying
\[
\curl\bs w=\bs\mu, \quad \|\bs w\|_2\lesssim |\bs\mu|_1.
\]
Then it holds the inf-sup condition
$$
|\bs\mu|_1= \frac{(\nabla\bs\mu, \nabla\boldsymbol\mu)}{|\bs\mu|_1} \lesssim \sup_{\bs w\in\bs H_0^2(\Omega;\mathbb R^3)}\frac{(\nabla\curl\bs w, \nabla\boldsymbol\mu)}{\|\bs w\|_2}.
$$
By the Babu\v{s}ka-Brezzi theory~\cite{BoffiBrezziFortin2013}, problem \eqref{eq:H2stokes3d} is well-posed, and
$$
\curl\bs v_2=\curl\bs v, \quad \|\bs v_2\|_2 + \|\bs\lambda\|_1\lesssim |\curl \bs v|_1.
$$
Finally we finish the proof by the fact that $\curl(\bs v-\bs v_2)=\bs0$.
\end{proof}

For $\boldsymbol{v}\in\boldsymbol{H}_0(\grad\curl, \Omega)$ satisfying $\curl\boldsymbol{v}=\boldsymbol{0}$, by \eqref{eq:v1v2regular}, we have $\boldsymbol{v}_2=\boldsymbol{0}$ and $\boldsymbol{v}=\nabla v_1$.

Now we apply the regular decomposition~\eqref{eq:gradcurlregulardecomp} to $\bs v\in\bs H_0(\grad\curl, \Omega)$. 
Let $I_h^{SZ}: H_0^1(\Omega)\to V_{h0}^g$ be the Scott-Zhang interpolation operator \cite{ScottZhang1990}.
Noting that $\boldsymbol I_h^{gc}$ can be applied to $\bs v_2$,
define
$\boldsymbol \Pi_h^{gc}: \bs H_0(\grad\curl, \Omega)\to\boldsymbol W_{h0}$ as follows
\[
\boldsymbol \Pi_h^{gc}\bs v:=\boldsymbol I_h^{gc}\bs v_2+\nabla I_h^{SZ}v_1.
\]
Clearly we have
\begin{equation}\label{eq:commutingPigcIsz}
\boldsymbol \Pi_h^{gc}(\nabla v)=\nabla(I_h^{SZ}v) \quad \forall~v\in H_0^1(\Omega).
\end{equation}
We acquire from \eqref{eq:commutingIcIs} that
\begin{equation}\label{eq:commutingPigcIs}
\curl_h(\boldsymbol \Pi_h^{gc}\bs v)=\bs I_h^s(\curl \bs v)\quad \forall~\bs v\in \bs H_0(\grad\curl, \Omega).
\end{equation}

Combining \eqref{eq:commutingPigcIsz}-\eqref{eq:commutingPigcIs}, \eqref{eq:commutingIsQ} and the complex \eqref{eq:Stokescomplex3dncfem0} yields the commutative diagram
\begin{equation*}
\resizebox{0.9\hsize}{!}{$
\begin{array}{c}
\xymatrix{
 0 \ar[r]^-{\subset} & H_0^1(\Omega)\ar[d]^{I_h^{SZ}} \ar[r]^-{\nabla} & \boldsymbol H_0(\grad\curl, \Omega) \ar[d]^{\boldsymbol{\Pi}_h^{gc}} \ar[r]^-{\curl}
                & \boldsymbol{H}_0^1(\Omega; \mathbb{R}^3) \ar[d]^{\boldsymbol{I}_h^s}   \ar[r]^-{\div} & \ar[d]^{I_h^{L^2}}L_0^2(\Omega) \ar[r]^{} & 0 \\
 0 \ar[r]^-{\subset} & V_{h0}^g\ar[r]^-{\nabla} & \boldsymbol W_{h0} \ar[r]^{\curl_h}
                &  \boldsymbol V_{h0}^s   \ar[r]^{\div_h} &  \mathcal Q_{h0} \ar[r]^{}& 0    }
\end{array}.
$}
\end{equation*}

\begin{lemma}
Assume $\bs v\in\boldsymbol H^1(\Omega; \mathbb{R}^3)$ and $\curl\bs v\in\boldsymbol H^2(\Omega; \mathbb{R}^3)$. Then
\begin{equation}\label{eq:Pigchestimate1}
\|\bs v-\boldsymbol \Pi_h^{gc}\bs v\|_0\lesssim h(|\bs v|_1+|\curl \bs v|_1),
\end{equation}
\begin{equation}\label{eq:Pigchestimate2}
\|\curl_h(\bs v-\boldsymbol \Pi_h^{gc}\bs v)\|_0+ h|\curl_h(\bs v-\boldsymbol \Pi_h^{gc}\bs v)|_1\lesssim h^2|\curl\bs v|_2.
\end{equation}
\end{lemma}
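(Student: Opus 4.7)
The plan is to exploit the very definition $\boldsymbol\Pi_h^{gc}\bs v:=\boldsymbol I_h^{gc}\bs v_2+\nabla I_h^{SZ}v_1$ coming from the regular decomposition \eqref{eq:gradcurlregulardecomp}, so that the error splits into a piece on which $\boldsymbol I_h^{gc}$ can be used directly (that is, on the smooth part $\bs v_2\in\bs H_0^2(\Omega;\mathbb R^3)$) and a gradient piece controlled by the Scott--Zhang operator. For the $\curl$-error, the commuting property \eqref{eq:commutingPigcIs} collapses the estimate to a pure nonconforming linear element interpolation error on $\curl\bs v$.

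For \eqref{eq:Pigchestimate2}, I would start with the identity $\curl_h(\bs v-\boldsymbol\Pi_h^{gc}\bs v)=\curl\bs v-\bs I_h^s(\curl\bs v)$ that follows from \eqref{eq:commutingPigcIs}. Since $\curl\bs v\in\boldsymbol H^2(\Omega;\mathbb R^3)$ by hypothesis, applying \eqref{eq:estimateIKs} elementwise with $j=2$ and summing yields
\[
\|\curl\bs v-\bs I_h^s(\curl\bs v)\|_0+h|\curl\bs v-\bs I_h^s(\curl\bs v)|_{1,h}\lesssim h^2|\curl\bs v|_2,
\]
which is exactly \eqref{eq:Pigchestimate2}. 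No further work is required here.

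For \eqref{eq:Pigchestimate1}, I would decompose
\[
\bs v-\boldsymbol\Pi_h^{gc}\bs v=(\bs v_2-\boldsymbol I_h^{gc}\bs v_2)+\nabla(v_1-I_h^{SZ}v_1).
\]
For the first summand, \eqref{eq:estimateIKgc} together with the stability bound $\|\bs v_2\|_2\lesssim|\curl\bs v|_1$ in \eqref{eq:v1v2regular} gives $\|\bs v_2-\boldsymbol I_h^{gc}\bs v_2\|_0\lesssim h\|\bs v_2\|_2\lesssim h|\curl\bs v|_1$. The key observation for the second summand is that, although a priori $v_1$ only satisfies $v_1\in H_0^1(\Omega)$, the identity $\nabla v_1=\bs v-\bs v_2$ and the hypothesis $\bs v\in\boldsymbol H^1(\Omega;\mathbb R^3)$ force $v_1\in H^2(\Omega)$ with
\[
|v_1|_2=|\bs v-\bs v_2|_1\leq|\bs v|_1+|\bs v_2|_2\lesssim|\bs v|_1+|\curl\bs v|_1.
\]
The standard Scott--Zhang error estimate then yields $|v_1-I_h^{SZ}v_1|_1\lesssim h|v_1|_2\lesssim h(|\bs v|_1+|\curl\bs v|_1)$, and combining the two bounds gives \eqref{eq:Pigchestimate1}.

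The main obstacle is the conceptual one highlighted in the previous paragraph: without noticing that $v_1$ inherits $H^2$-regularity from $\bs v\in\boldsymbol H^1$ and $\bs v_2\in\bs H^2_0$ via $\nabla v_1=\bs v-\bs v_2$, one would only have $v_1\in H_0^1$, and the Scott--Zhang piece would contribute no convergence rate. Once this regularity pickup is recognized, all remaining steps are standard interpolation estimates already established in the paper.
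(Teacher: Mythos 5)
Your proposal is correct and follows essentially the same route as the paper: the commuting identity \eqref{eq:commutingPigcIs} plus \eqref{eq:estimateIKs} for \eqref{eq:Pigchestimate2}, and the splitting $\bs v-\boldsymbol\Pi_h^{gc}\bs v=(\bs v_2-\boldsymbol I_h^{gc}\bs v_2)+\nabla(v_1-I_h^{SZ}v_1)$ with \eqref{eq:estimateIKgc}, \eqref{eq:v1v2regular} and the observation $|v_1|_2\lesssim|\bs v-\bs v_2|_1\lesssim|\bs v|_1+|\curl\bs v|_1$ for \eqref{eq:Pigchestimate1}. The regularity pickup for $v_1$ that you emphasize is exactly the step the paper uses at the start of its proof.
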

\begin{proof}
Noting that $\nabla v_1=\bs v-\bs v_2$, it follows
\[
|v_1|_2\lesssim |\bs v-\bs v_2|_1\lesssim |\bs v|_1+|\curl \bs v|_1.
\]
Since $\bs v-\boldsymbol \Pi_h^{gc}\bs v=\bs v_2-\boldsymbol I_h^{gc}\bs v_2+\nabla(v_1-I_h^{SZ}v_1)$, we acquire from \eqref{eq:estimateIKgc}, the error estimate of $I_h^{SZ}$ and \eqref{eq:v1v2regular} that
\begin{align*}
\|\bs v-\boldsymbol \Pi_h^{gc}\bs v\|_0&\leq \|\bs v_2-\boldsymbol I_h^{gc}\bs v_2\|_0 + |v_1-I_h^{SZ}v_1|_1 \\
&\lesssim h^{k+1}\|\bs v_2\|_{2}+h|v_1|_2\lesssim h(|\bs v|_1+|\curl \bs v|_1).
\end{align*}
Employing \eqref{eq:estimateIKs}, we get from \eqref{eq:commutingPigcIs} that
\[
\|\curl_h(\bs v-\boldsymbol \Pi_h^{gc}\bs v)\|_0=\|\curl\bs v-\bs I_h^s(\curl\bs v)\|_0\lesssim h^2|\curl\bs v|_2,
\]
\[
|\curl_h(\bs v-\boldsymbol \Pi_h^{gc}\bs v)|_1=|\curl\bs v-\bs I_h^s(\curl\bs v)|_1\lesssim h|\curl\bs v|_2.
\]
This ends the proof.
\end{proof}


\subsection{Error analysis}
Hereafter we assume $\bs u\in\bs H_0(\grad\curl, \Omega)$ possesses the regularity $\curl\bs u\in\bs H^2(\Omega;\mathbb R^3)$, which is true for $\Omega$ being convex (See Lemma~\ref{lem:bigradcurlregularity}.).
Applying the integration by parts to the first equation in \eqref{eq:quadcurlgrad}, we derive
\begin{equation}\label{eq:quadcurluprimal1}
-(\Delta\curl\bs u, \curl\bs v) =(\bs f, \bs v) \qquad \forall~\bs v\in \bs H_0(\curl, \Omega).
\end{equation}

We first present the consistency error of the nonconforming methods \eqref{eq:quadcurlmfem1}-\eqref{eq:quadcurlmfem2}.
\begin{lemma}
We have for any $\bs v_h\in \boldsymbol W_{h0}$ that
\begin{equation}\label{eq:errorprior1}
(\nabla\curl\bs u, \nabla_h\curl_h\bs v_h) + (\Delta\curl\bs u, \curl_h\bs v_h)\lesssim h|\curl\bs u|_2|\curl_h\bs v_h|_{1,h}.
\end{equation}
\end{lemma}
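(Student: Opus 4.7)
\smallskip
\noindent\textbf{Proof proposal.}
The plan is the standard integration-by-parts consistency argument for nonconforming elements. On each $K\in\mathcal T_h$, since $\curl\bs u\in\bs H^2(K;\mathbb R^3)$, Green's formula gives
\[
(\nabla\curl\bs u,\nabla\curl\bs v_h)_K = -(\Delta\curl\bs u,\curl\bs v_h)_K + \int_{\partial K}(\partial_{\bs n}\curl\bs u)\cdot\curl\bs v_h\dd s.
\]
Summing over $K\in\mathcal T_h$, the volume terms combine into the left-hand side of \eqref{eq:errorprior1}, so it remains to control the face contribution
\[
E(\bs v_h):=\sum_{K\in\mathcal T_h}\int_{\partial K}(\partial_{\bs n}\curl\bs u)\cdot\curl\bs v_h\dd s = \sum_{F\in\mathcal F_h^i}\int_F(\partial_{\bs n_F}\curl\bs u)\cdot\llbracket\curl_h\bs v_h\rrbracket\dd s + \sum_{F\in\mathcal F_h\setminus\mathcal F_h^i}\int_F(\partial_{\bs n}\curl\bs u)\cdot\curl\bs v_h\dd s,
\]
where $\partial_{\bs n}\curl\bs u$ is single-valued across interior faces since $\curl\bs u\in\bs H^2(\Omega;\mathbb R^3)$.

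The key observation is the weak continuity of $\curl_h\bs v_h$. On every interior face, \eqref{eq:curlWhweakcontinuity} gives $\int_F\llbracket\curl_h\bs v_h\rrbracket\dd s=\bs 0$. On each boundary face $F$, the vanishing boundary degrees of freedom of $\bs W_{h0}$ (both the edge ones on $\partial F$ and the face one \eqref{gradcurlncfmdof2} on $F$) together with the identity \eqref{eq:20220207} applied to $\bs v_h$ yield $\int_F\curl\bs v_h\dd s=\bs0$, i.e.\ $\boldsymbol Q_F^0(\curl\bs v_h)=\bs0$. Hence on either type of face, the mean of the jump (or of $\curl\bs v_h$ itself) vanishes, and we may subtract $\boldsymbol Q_F^0(\partial_{\bs n}\curl\bs u)$ from the first factor without changing the integral:
\[
E(\bs v_h) = \sum_{F\in\mathcal F_h^i}\int_F\bigl(\partial_{\bs n_F}\curl\bs u - \boldsymbol Q_F^0(\partial_{\bs n_F}\curl\bs u)\bigr)\cdot\llbracket\curl_h\bs v_h\rrbracket\dd s + \sum_{F\in\mathcal F_h\setminus\mathcal F_h^i}\int_F\bigl(\partial_{\bs n}\curl\bs u - \boldsymbol Q_F^0(\partial_{\bs n}\curl\bs u)\bigr)\cdot\curl\bs v_h\dd s.
\]

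The finish is routine. Cauchy--Schwarz on each face, together with the standard trace and approximation estimate $\|w-\boldsymbol Q_F^0 w\|_{0,F}\lesssim h_K^{1/2}|w|_{1,K}$ applied to $w=\partial_{\bs n}\curl\bs u$ (using $|\partial_{\bs n}\curl\bs u|_{1,K}\lesssim|\curl\bs u|_{2,K}$), gives $\|\partial_{\bs n}\curl\bs u-\boldsymbol Q_F^0(\partial_{\bs n}\curl\bs u)\|_{0,F}\lesssim h_K^{1/2}|\curl\bs u|_{2,K}$. Similarly, since $\curl_h\bs v_h|_K\in\mathbb P_1(K;\mathbb R^3)$, the trace inequality and the vanishing-mean property give $\|\llbracket\curl_h\bs v_h\rrbracket\|_{0,F}\lesssim h_K^{1/2}|\curl_h\bs v_h|_{1,K^+\cup K^-}$ on interior faces, and analogously on boundary faces after subtracting the (zero) mean. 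Multiplying the two estimates, summing by Cauchy--Schwarz over faces, and absorbing the sum of element seminorms into $|\curl\bs u|_2|\curl_h\bs v_h|_{1,h}$ yields \eqref{eq:errorprior1}.

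The main obstacle (and the only non-routine point) is the treatment of boundary faces: one must recognize that the combined vanishing of the edge and face degrees of freedom of $\bs W_{h0}$ on $\partial\Omega$, via the identity \eqref{eq:20220207}, forces $\boldsymbol Q_F^0(\curl\bs v_h)=\bs0$ on every boundary face, so that the same ``subtract the mean'' trick as on interior faces applies. Without this the boundary-face contribution could not be controlled by $h|\curl\bs u|_2|\curl_h\bs v_h|_{1,h}$.
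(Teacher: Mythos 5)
Your proposal is correct and follows essentially the same route as the paper's proof: elementwise integration by parts, then use of the weak continuity \eqref{eq:curlWhweakcontinuity} on interior faces (and the vanishing boundary degrees of freedom of $\bs W_{h0}$ together with \eqref{eq:20220207} on boundary faces) to insert the face averages $\bs Q_F^0$, followed by Cauchy--Schwarz with standard trace and approximation estimates. The only cosmetic difference is that the paper subtracts $\bs Q_F^0$ from both factors and sums elementwise, whereas you subtract it only from $\partial_{\bs n}\curl\bs u$ and then invoke the zero mean of the jump to bound $\|\llbracket\curl_h\bs v_h\rrbracket\|_{0,F}$; the two are equivalent, and your explicit treatment of the boundary faces is exactly what the paper leaves implicit.
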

\begin{proof}
Due to \eqref{eq:curlWhweakcontinuity}, we get
\begin{align*}
&\sum_{K\in\mathcal T_h}(\partial_n(\curl\bs u), \curl_h\bs v_h)_{\partial K} \\
=&\sum_{K\in\mathcal T_h}\sum_{F\in\mathcal F(K)}(\partial_n(\curl\bs u)-\bs Q_F^0\partial_n(\curl\bs u), \curl_h\bs v_h)_{F} \\
=&\sum_{K\in\mathcal T_h}\sum_{F\in\mathcal F(K)}(\partial_n(\curl\bs u)-\bs Q_F^0\partial_n(\curl\bs u), \curl_h\bs v_h-\bs Q_F^0\curl_h\bs v_h)_{F} \\
\lesssim &\, h|\curl\bs u|_2|\curl_h\bs v_h|_{1,h}.
\end{align*}
Thus \eqref{eq:errorprior1} follows from the integration by parts.
\end{proof}

\begin{lemma}
We have for any $\bs v_h\in \boldsymbol W_{h0}$ that
\begin{equation}
(\nabla\curl\bs u, \nabla_h\curl_h\bs v_h) - (\bs f, \bs v_h)\lesssim h(|\curl\bs u|_2+\|\bs f\|_0)|\curl_h\bs v_h|_{1,h}. \label{eq:errorprior2}
\end{equation}
\end{lemma}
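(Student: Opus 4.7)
The plan is to bootstrap from \eqref{eq:errorprior1}. Writing
\[
(\nabla\curl\bs u, \nabla_h\curl_h\bs v_h) - (\bs f, \bs v_h) = \big[(\nabla\curl\bs u, \nabla_h\curl_h\bs v_h) + (\Delta\curl\bs u, \curl_h\bs v_h)\big] - (\Delta\curl\bs u, \curl_h\bs v_h) - (\bs f, \bs v_h),
\]
the bracket is already $\lesssim h|\curl\bs u|_2|\curl_h\bs v_h|_{1,h}$ by \eqref{eq:errorprior1}, so it suffices to bound the residual $\mathcal R:=-(\Delta\curl\bs u, \curl_h\bs v_h) - (\bs f, \bs v_h)$. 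Since $-\curl\Delta\curl\bs u = \bs f$ from \eqref{eq:quadcurlgrad}, elementwise integration by parts yields $(\Delta\curl\bs u, \curl\bs v_h)_K = -(\bs f, \bs v_h)_K + (\Delta\curl\bs u\times\bs n, \bs v_h)_{\partial K}$ for each $K\in\mathcal T_h$, and summing gives $\mathcal R = -\sum_{K\in\mathcal T_h}(\Delta\curl\bs u\times\bs n, \bs v_h)_{\partial K}$.

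I would then insert the conforming projection $\bs I_h^c\bs v_h$. Testing \eqref{eq:quadcurluprimal1} with $\bs I_h^c\bs v_h\in\bs V_{h0}^c\subset\bs H_0(\curl,\Omega)$ gives $(\Delta\curl\bs u,\curl(\bs I_h^c\bs v_h)) = -(\bs f,\bs I_h^c\bs v_h)$, while elementwise IBP on the polynomial $\bs I_h^c\bs v_h$ produces
\[
(\Delta\curl\bs u, \curl(\bs I_h^c\bs v_h)) = -(\bs f,\bs I_h^c\bs v_h) - \sum_{K}(\Delta\curl\bs u\times\bs n, \bs I_h^c\bs v_h)_{\partial K}.
\]
Comparing the two identities forces $\sum_K(\Delta\curl\bs u\times\bs n, \bs I_h^c\bs v_h)_{\partial K}=0$. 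Substituting this into the expression for $\mathcal R$ and integrating by parts backwards on $\bs v_h-\bs I_h^c\bs v_h$ yields
\[
\mathcal R = (\bs f, \bs v_h - \bs I_h^c\bs v_h) + (\Delta\curl\bs u, \curl_h(\bs v_h - \bs I_h^c\bs v_h)).
\]

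For the first summand, \eqref{eq:IKc3} gives $\|\bs v_h-\bs I_h^c\bs v_h\|_0\lesssim h\|\curl_h\bs v_h\|_0$, and since $\curl_h\bs v_h\in\bs V_{h0}^s$ the discrete Poincar\'e inequality \eqref{eq:normequivVsh} further gives $\|\curl_h\bs v_h\|_0\lesssim |\curl_h\bs v_h|_{1,h}$, so Cauchy--Schwarz yields $h\|\bs f\|_0|\curl_h\bs v_h|_{1,h}$. The delicate piece is the second summand: a direct Cauchy--Schwarz combined with the inverse inequality only gives $\|\curl_h(\bs v_h-\bs I_h^c\bs v_h)\|_0\lesssim\|\curl_h\bs v_h\|_0$ and loses the needed factor $h$. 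The main obstacle is the recovery of this $h$. I would remedy it using the commuting identity $\curl(\bs I_h^c\bs v_h) = \bs I_h^d(\curl_h\bs v_h)$ from the three-line diagram \eqref{eq:stokesdeRhamcddiscrete}, which rewrites $\curl_h(\bs v_h-\bs I_h^c\bs v_h) = (I-\bs I_h^d)(\curl_h\bs v_h)$. Because the lowest-order Raviart--Thomas space preserves constants, the Bramble--Hilbert lemma supplies $\|\bs\xi_h - \bs I_K^d\bs\xi_h\|_{0,K}\lesssim h_K|\bs\xi_h|_{1,K}$ elementwise for any piecewise-linear $\bs\xi_h$; specializing to $\bs\xi_h=\curl_h\bs v_h$ and summing gives $\|\curl_h(\bs v_h-\bs I_h^c\bs v_h)\|_0\lesssim h|\curl_h\bs v_h|_{1,h}$, and Cauchy--Schwarz with $\|\Delta\curl\bs u\|_0\leq|\curl\bs u|_2$ finishes the bound. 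Collecting this with \eqref{eq:errorprior1} concludes \eqref{eq:errorprior2}.
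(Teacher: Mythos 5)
Your argument is correct and takes essentially the same route as the paper: split off the consistency term handled by \eqref{eq:errorprior1}, insert $\bs I_h^c\bs v_h$ through \eqref{eq:quadcurluprimal1}, and then estimate $\bs v_h-\bs I_h^c\bs v_h$ in $L^2$ via \eqref{eq:IKc3} and its elementwise curl with an extra factor of $h$, finishing with \eqref{eq:normequivVsh}. The only differences are cosmetic: your detour through elementwise integration by parts reproduces (up to a harmless sign flip that the Cauchy--Schwarz bound absorbs) the identity the paper obtains by directly subtracting the tested equation, and your commuting-diagram/Raviart--Thomas argument for $\|\curl_h(\bs v_h-\bs I_h^c\bs v_h)\|_0\lesssim h|\curl_h\bs v_h|_{1,h}$ is exactly the standard proof of the estimate \eqref{eq:IKc2} that the paper simply cites.
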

\begin{proof}
We get from
\eqref{eq:quadcurluprimal1} with $\bs v=\bs I_h^c\bs v_h$ that
\[
 (\Delta\curl\bs u, \curl_h\bs v_h) + (\bs f, \bs v_h) 
=  (\Delta\curl\bs u, \curl_h(\bs v_h-\bs I_h^c\bs v_h)) + (\bs f, \bs v_h-\bs I_h^c\bs v_h).
\]
Applying \eqref{eq:IKc2} and \eqref{eq:IKc3} gives
\begin{align*}
&- (\Delta\curl\bs u, \curl_h\bs v_h) - (\bs f, \bs v_h) \\
\lesssim& \|\Delta\curl\bs u\|_0\|\curl_h(\bs v_h-\bs I_h^c\bs v_h)\|_{0} + \|\bs f\|_0\|\bs v_h-\bs I_h^c\bs v_h\|_{0} \\
\lesssim& h|\curl\bs u|_2|\curl_h\bs v_h|_{1,h} + h\|\bs f\|_0\|\curl_h\bs v_h\|_{0}.
\end{align*}
Together with \eqref{eq:errorprior1}, we derive
\begin{align*}
&(\nabla\curl\bs u, \nabla_h\curl_h\bs v_h) - (\bs f, \bs v_h) \\
\lesssim& h|\curl\bs u|_2|\curl_h\bs v_h|_{1,h} + h\|\bs f\|_0\|\curl_h\bs v_h\|_{0}.
\end{align*}
Hence \eqref{eq:errorprior2} follows from \eqref{eq:normequivVsh}.
\end{proof}

Now we can show the a priori error estimate.

\begin{theorem}
Let $\bs  u\in\bs H_0(\grad\curl, \Omega)$ be the solution of the problem \eqref{eq:quadcurl}, and $\bs u_h\in \boldsymbol W_{h0}$ the solution of the mixed finite element methods \eqref{eq:quadcurlmfem1}-\eqref{eq:quadcurlmfem2}.
Assume $\bs u\in\boldsymbol H^1(\Omega; \mathbb{R}^3)$ and $\curl\bs u\in\boldsymbol H^2(\Omega; \mathbb{R}^3)$. 
We have
\begin{equation}\label{eq:errorprioruh}
\|\bs u-\bs u_h\|_{H_h(\grad\curl)}\lesssim h(\|\curl\bs u\|_2+|\bs u|_1+\|\bs f\|_0).
\end{equation}
\end{theorem}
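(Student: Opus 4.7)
The plan is to run the standard Strang-type argument for nonconforming mixed methods, using the quasi-interpolant $\boldsymbol{\Pi}_h^{gc}$ as the comparison object and the discrete stability \eqref{eq:mfemstab} to control the discrete error. Concretely, I first split
$$
\|\bs u-\bs u_h\|_{H_h(\grad\curl)}\leq \|\bs u-\boldsymbol{\Pi}_h^{gc}\bs u\|_{H_h(\grad\curl)}+\|\boldsymbol{\Pi}_h^{gc}\bs u-\bs u_h\|_{H_h(\grad\curl)}.
$$
The first (approximation) term is already controlled by \eqref{eq:Pigchestimate1}--\eqref{eq:Pigchestimate2}, giving $\lesssim h(|\bs u|_1+|\curl\bs u|_1+|\curl\bs u|_2)$, which is absorbed into the right-hand side of \eqref{eq:errorprioruh}.

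For the second (discrete) term, set $\widetilde{\bs u}_h:=\boldsymbol{\Pi}_h^{gc}\bs u-\bs u_h\in\boldsymbol W_{h0}$ and $\widetilde\lambda_h:=0$; since both the continuous and discrete Lagrange multipliers vanish ($\div\bs f=0$), we may apply \eqref{eq:mfemstab}. The numerator there breaks into two pieces. For the $(\nabla_h\curl_h\widetilde{\bs u}_h,\nabla_h\curl_h\bs v_h)$ piece, I would insert $\pm(\nabla\curl\bs u,\nabla_h\curl_h\bs v_h)$ and use \eqref{eq:quadcurlmfem1reduce} to write it as
$$
(\nabla_h\curl_h(\boldsymbol{\Pi}_h^{gc}\bs u-\bs u),\nabla_h\curl_h\bs v_h)+\bigl[(\nabla\curl\bs u,\nabla_h\curl_h\bs v_h)-(\bs f,\bs v_h)\bigr].
$$
The first summand is bounded by Cauchy--Schwarz and \eqref{eq:Pigchestimate2}, yielding $\lesssim h|\curl\bs u|_2\,\|\bs v_h\|_{H_h(\grad\curl)}$; the bracketed consistency term is exactly what \eqref{eq:errorprior2} bounds by $\lesssim h(|\curl\bs u|_2+\|\bs f\|_0)|\curl_h\bs v_h|_{1,h}$.

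The $(\widetilde{\bs u}_h,\nabla\mu_h)$ piece is handled by noting that $\mu_h\in V_{h0}^g\subset H_0^1(\Omega)$, so \eqref{eq:quadcurlmixed2} gives $(\bs u,\nabla\mu_h)=0$, and \eqref{eq:quadcurlmfem2} gives $(\bs u_h,\nabla\mu_h)=0$; hence
$$
(\widetilde{\bs u}_h,\nabla\mu_h)=(\boldsymbol{\Pi}_h^{gc}\bs u-\bs u,\nabla\mu_h)\leq \|\boldsymbol{\Pi}_h^{gc}\bs u-\bs u\|_0\,|\mu_h|_1\lesssim h(|\bs u|_1+|\curl\bs u|_1)|\mu_h|_1
$$
by \eqref{eq:Pigchestimate1}. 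Feeding all these bounds into \eqref{eq:mfemstab} gives $\|\widetilde{\bs u}_h\|_{H_h(\grad\curl)}\lesssim h(\|\curl\bs u\|_2+|\bs u|_1+\|\bs f\|_0)$, and combining with the approximation estimate via the triangle inequality yields \eqref{eq:errorprioruh}.

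The only delicate ingredient is the consistency bound \eqref{eq:errorprior2}, but that has already been established in a previous lemma; modulo that, the argument is entirely routine mixed-method analysis. I would expect the main care to be taken in verifying that all orthogonality identities used (namely $(\bs u,\nabla\mu_h)=0$ and $(\bs u_h,\nabla\mu_h)=0$, together with vanishing of both Lagrange multipliers) hold in the form required by the stability norm, and in correctly invoking \eqref{eq:Pigchestimate1}--\eqref{eq:Pigchestimate2} so that no term with higher regularity requirement than $\curl\bs u\in\bs H^2$ and $\bs u\in\bs H^1$ appears.
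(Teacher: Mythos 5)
Your proposal is correct and follows essentially the same route as the paper: triangle inequality through $\boldsymbol{\Pi}_h^{gc}\bs u$, discrete stability \eqref{eq:mfemstab} with $\widetilde{\bs u}_h=\boldsymbol{\Pi}_h^{gc}\bs u-\bs u_h$ and $\widetilde\lambda_h=0$, the insertion of $\pm(\nabla\curl\bs u,\nabla_h\curl_h\bs v_h)$ combined with \eqref{eq:quadcurlmfem1reduce} and the consistency bound \eqref{eq:errorprior2} (this is exactly the paper's estimate \eqref{eq:20200719-3}), and the orthogonality $(\bs u-\bs u_h,\nabla\mu_h)=0$ to reduce the multiplier term to an approximation error handled by \eqref{eq:Pigchestimate1}. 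No gaps; the final assembly via \eqref{eq:Pigchestimate1}--\eqref{eq:Pigchestimate2} matches the paper's conclusion.
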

\begin{proof}
It follows from \eqref{eq:errorprior2} that
\begin{align}
&(\nabla_h\curl_h(\boldsymbol \Pi_h^{gc}\bs u), \nabla_h\curl_h\bs v_h)-(\bs f, \bs v_h) \notag\\
=&(\nabla_h\curl_h(\boldsymbol \Pi_h^{gc}\bs u-\bs u), \nabla_h\curl_h\bs v_h) + (\nabla\curl\bs u, \nabla_h\curl_h\bs v_h)-(\bs f, \bs v_h) \notag\\
\lesssim& |\curl_h(\boldsymbol \Pi_h^{gc}\bs u-\bs u)|_{1,h}|\curl_h\bs v_h|_{1,h}+h(|\curl\bs u|_2+\|\bs f\|_0)|\curl_h\bs v_h|_{1,h}. \label{eq:20200719-3}
\end{align}

On the other hand, by the discrete stability \eqref{eq:mfemstab} with $\widetilde{\bs u}_h=\boldsymbol \Pi_h^{gc}\bs u-\bs u_h$
and $\widetilde\lambda_h=0$, we get from \eqref{eq:quadcurlmfem1reduce} and the fact $\bs u_h\in \mathcal K_h^d$ that
\begin{align*}
&\|\boldsymbol \Pi_h^{gc}\bs u-\bs u_h\|_{H_h(\grad\curl)} \\
\lesssim&\sup_{(\bs v_h, \mu_h)\in\boldsymbol W_{h0}\times V_{h0}^g}\frac{(\nabla_h\curl_h(\boldsymbol \Pi_h^{gc}\bs u-\bs u_h), \nabla_h\curl_h\bs v_h) +(\boldsymbol \Pi_h^{gc}\bs u-\bs u_h, \nabla\mu_h)}{\|\bs v_h\|_{H_h(\grad\curl)}+ |\mu_h|_1}  \\
=&\sup_{(\bs v_h, \mu_h)\in\boldsymbol W_{h0}\times V_{h0}^g}\frac{(\nabla_h\curl_h(\boldsymbol \Pi_h^{gc}\bs u), \nabla_h\curl_h\bs v_h)-(\bs f, \bs v_h) +(\boldsymbol \Pi_h^{gc}\bs u-\bs u, \nabla\mu_h)}{\|\bs v_h\|_{H_h(\grad\curl)}+ |\mu_h|_1} \\
\lesssim& \|\bs u-\boldsymbol \Pi_h^{gc}\bs u\|_0 + \sup_{\bs v_h\in\boldsymbol W_{h0}}\frac{(\nabla_h\curl_h(\boldsymbol \Pi_h^{gc}\bs u), \nabla_h\curl_h\bs v_h)-(\bs f, \bs v_h)}{\|\bs v_h\|_{H_h(\grad\curl)}}. 
\end{align*}
Hence we get from \eqref{eq:20200719-3} that
\[
\|\boldsymbol \Pi_h^{gc}\bs u-\bs u_h\|_{H_h(\grad\curl)}\lesssim \|\bs u-\boldsymbol \Pi_h^{gc}\bs u\|_{H_h(\grad\curl)}+h(|\curl\bs u|_2+\|\bs f\|_0).
\]
Thus
\begin{align*}
\|\bs u-\bs u_h\|_{H_h(\grad\curl)}\lesssim&\|\bs u-\boldsymbol \Pi_h^{gc}\bs u\|_{H_h(\grad\curl)} + \|\boldsymbol \Pi_h^{gc}\bs u-\bs u_h\|_{H_h(\grad\curl)} \\
\lesssim & \|\bs u-\boldsymbol \Pi_h^{gc}\bs u\|_{H_h(\grad\curl)} + h(|\curl\bs u|_2+\|\bs f\|_0).
\end{align*}
Finally \eqref{eq:errorprioruh} follows from \eqref{eq:Pigchestimate1} and \eqref{eq:Pigchestimate2}.
\end{proof}

\begin{remark}\rm
As illustrated in \cite[Section 7.9]{BoffiBrezziFortin2013},
the convergence would deteriorate if using the nonconforming finite element space $\boldsymbol W_{h0}$ to discretize Maxwell equation
\begin{equation*}
\left\{
\begin{aligned}
\curl\curl\bs u&=\bs f\quad\quad\text{in }\Omega, \\
\div\bs u&=0\quad\quad\,\text{in }\Omega, \\
\bs u\times\bs n&=\bs 0\qquad\text{on }\partial\Omega.
\end{aligned}
\right.
\end{equation*}
\end{remark}

Next we estimate $\|\curl_h(\bs u-\bs u_h)\|_{0}$ by the duality argument. To this end, consider the dual problem
\begin{equation}\label{eq:quadcurldual}
\left\{
\begin{aligned}
-\curl\Delta\curl\widetilde{\bs u}&=\curl\curl\bs I_h^c(\boldsymbol \Pi_h^{gc}\bs u - \bs u_h) \quad\;\;\;\text{in }\Omega, \\
\div\widetilde{\bs u}&=0\qquad\qquad\qquad\qquad\qquad\quad\;\;\,\text{in }\Omega, \\
\widetilde{\bs u}\times\bs n=(\curl\widetilde{\bs u})\times\bs n&=\bs 0\qquad\qquad\qquad\qquad\qquad\quad\;\;\text{on }\partial\Omega,
\end{aligned}
\right.
\end{equation}
where $\widetilde{\bs u}\in\bs H_0(\grad\curl, \Omega)$.
The first equation in the dual problem \eqref{eq:quadcurldual} holds in the sense of $\bs H^{-1}(\div, \Omega)$, where
$\bs H^{-1}(\div, \Omega):=\{\bs v\in\bs H^{-1}(\Omega; \mathbb R^3): \div\bs v\in H^{-1}(\Omega)\}$ is the dual space of $\bs H_0(\curl, \Omega)$ \cite{ChenHuang2018}. Thanks to \eqref{eq:stokesdeRhamcddiscrete} and \eqref{eq:commutingPigcIs}, it holds
\begin{align}
\curl\bs I_h^c(\boldsymbol \Pi_h^{gc}\bs u - \bs u_h)&=\bs I_h^d\curl_h(\boldsymbol \Pi_h^{gc}\bs u - \bs u_h)=\bs I_h^d(\bs I_h^s\curl\bs u - \curl_h\bs u_h) \notag\\
&=\bs I_h^d\curl_h(\bs u - \bs u_h). \label{eq:dualrhs1}
\end{align}

We assume the dual problem \eqref{eq:quadcurldual} possesses the following regularity in this section
\begin{equation}\label{eq:quadcurldualregular}
\|\widetilde{\bs u}\|_1+\|\curl\widetilde{\bs u}\|_2\lesssim \|\curl\curl\bs I_h^c(\boldsymbol \Pi_h^{gc}\bs u - \bs u_h) \|_{-1}\leq \|\bs I_h^d\curl_h(\bs u - \bs u_h)\|_0.
\end{equation}
The regularity \eqref{eq:quadcurldualregular} holds for the domain $\Omega$ being convex (See Lemma~\ref{lem:bigradcurlregularity}.).
Similarly as \eqref{eq:quadcurluprimal1},  it holds from \eqref{eq:quadcurldual} that
\begin{equation}\label{eq:20200815-3}
-(\Delta\curl\widetilde{\bs u}, \curl\bs v) =(\curl\bs I_h^c(\boldsymbol \Pi_h^{gc}\bs u - \bs u_h), \curl\bs v) \qquad \forall~\bs v\in \bs H_0(\curl, \Omega).
\end{equation}

\begin{theorem}
Let $\bs  u\in\bs H_0(\grad\curl, \Omega)$ be the solution of the problem \eqref{eq:quadcurl}, and $\bs u_h\in \boldsymbol W_{h0}$ the solution of the mixed finite element methods \eqref{eq:quadcurlmfem1}-\eqref{eq:quadcurlmfem2}.
Assume the regularity \eqref{eq:quadcurldualregular} holds. 
We have
\begin{equation}\label{eq:errorprioruhcurl}
\|\curl_h(\bs u - \bs u_h)\|_0\lesssim  h^{k+1}\|\bs f\|_0+h^2(\|\curl\bs u\|_2+|\bs u|_1).
\end{equation}
\end{theorem}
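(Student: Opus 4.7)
The plan is an Aubin--Nitsche style duality argument based on \eqref{eq:quadcurldual} and its regularity \eqref{eq:quadcurldualregular}. Set $\bs e_h:=\boldsymbol{\Pi}_h^{gc}\bs u-\bs u_h\in\boldsymbol W_{h0}$. By the triangle inequality, \eqref{eq:Pigchestimate2}, the elementwise version of \eqref{eq:IKc2} summed over $K$, and the $H_h(\grad\curl)$ a priori estimate \eqref{eq:errorprioruh},
\[
\|\curl_h(\bs u-\bs u_h)\|_0 \le \|\curl_h(\bs u-\boldsymbol{\Pi}_h^{gc}\bs u)\|_0 + \|\curl_h\bs e_h-\curl\bs I_h^c\bs e_h\|_0 + \|\curl\bs I_h^c\bs e_h\|_0,
\]
where the first two summands are already of order $h^2(\|\curl\bs u\|_2+|\bs u|_1+\|\bs f\|_0)$. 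The remaining task is to bound the third summand $\|\curl\bs I_h^c\bs e_h\|_0$.

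Since $\bs I_h^c\bs e_h\in\bs V_{h0}^c\subset\bs H_0(\curl,\Omega)$, testing \eqref{eq:20200815-3} with $\bs v=\bs I_h^c\bs e_h$ gives $\|\curl\bs I_h^c\bs e_h\|_0^2=-(\Delta\curl\widetilde{\bs u},\curl\bs I_h^c\bs e_h)$. Replacing $\curl\bs I_h^c\bs e_h$ by $\curl_h\bs e_h$ (the discrepancy is $O(h|\curl_h\bs e_h|_{1,h})$ by \eqref{eq:IKc2}, hence of order $h^2$) and then applying the $\widetilde{\bs u}$-analog of \eqref{eq:errorprior1} to convert $-(\Delta\curl\widetilde{\bs u},\curl_h\bs e_h)$ into $(\nabla\curl\widetilde{\bs u},\nabla_h\curl_h\bs e_h)$ up to a remainder $O(h|\curl\widetilde{\bs u}|_2\cdot|\curl_h\bs e_h|_{1,h})=O(h^2|\curl\widetilde{\bs u}|_2(\ldots))$, reduces the problem to estimating the Galerkin-like quantity $(\nabla\curl\widetilde{\bs u},\nabla_h\curl_h\bs e_h)$.

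For this Galerkin-like term I expand $\bs e_h=\boldsymbol{\Pi}_h^{gc}\bs u-\bs u_h$ and use the discrete equation \eqref{eq:quadcurlmfem1reduce} together with the continuous equation \eqref{eq:quadcurluprimal1} tested with $\bs I_h^c\boldsymbol{\Pi}_h^{gc}\widetilde{\bs u}\in\bs H_0(\curl,\Omega)$; this reproduces the structure of the proof of \eqref{eq:errorprior2}, leaving two dominant consistency residuals: a face-jump term $\sum_F\int_F(\partial_n\curl\bs u-\bs Q_F^0\partial_n\curl\bs u)\cdot\llbracket\bs I_h^s\curl\widetilde{\bs u}\rrbracket\,\dd s$, and an $\bs f$-term involving $(\bs f,\boldsymbol{\Pi}_h^{gc}\widetilde{\bs u}-\bs I_h^c\boldsymbol{\Pi}_h^{gc}\widetilde{\bs u})$. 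The key sharpening---which I expect to be the main obstacle in carrying out rigorously---is that the dual smoothness $\curl\widetilde{\bs u}\in\bs H^2$ upgrades the jump bound to $\|\llbracket\bs I_h^s\curl\widetilde{\bs u}\rrbracket\|_{0,F}\lesssim h^{3/2}|\curl\widetilde{\bs u}|_2$, gaining an extra factor of $h$ over a generic $P_1$-nonconforming jump estimate and yielding $O(h^2|\curl\bs u|_2\,\|\curl\widetilde{\bs u}\|_2)$ for the face-jump term; similarly, the $\mathbb P_k$-preservation of $\bs I_h^c$ combined with the smoothness of $\widetilde{\bs u}$ (or, equivalently, the commuting diagram $\curl\bs I_h^c\boldsymbol{\Pi}_h^{gc}\widetilde{\bs u}=\bs I_h^d\curl\widetilde{\bs u}$ together with RT interpolation on $\curl\widetilde{\bs u}$) gives $O(h^{k+1}\|\bs f\|_0\,\|\curl\widetilde{\bs u}\|_2)$ for the $\bs f$-term. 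Collecting all contributions and absorbing $\|\curl\widetilde{\bs u}\|_2\lesssim\|\curl\bs I_h^c\bs e_h\|_0$, which is guaranteed by \eqref{eq:dualrhs1} together with \eqref{eq:quadcurldualregular}, yields \eqref{eq:errorprioruhcurl} after dividing through. Without the dual smoothness, both consistency terms would only produce first-order bounds, so tracking the extra $h$-gain in each is where the real work of the proof lies.
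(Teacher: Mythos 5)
Your overall skeleton is essentially the paper's: the same dual problem \eqref{eq:quadcurldual}, the same target quantity $\|\curl\bs I_h^c(\boldsymbol\Pi_h^{gc}\bs u-\bs u_h)\|_0=\|\bs I_h^d\curl_h(\bs u-\bs u_h)\|_0$ via \eqref{eq:dualrhs1} and \eqref{eq:20200815-3}, the same passage between $-\Delta\curl\widetilde{\bs u}$ and $\nabla\curl\widetilde{\bs u}$ by elementwise integration by parts using mean-zero face jumps, the same absorption of $\|\curl\widetilde{\bs u}\|_2$ through \eqref{eq:quadcurldualregular}, and \eqref{eq:errorprioruh} invoked at the end to remove the $h|\curl_h(\bs u-\bs u_h)|_{1,h}$-type remainders. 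The face-jump residual you flag as "the main obstacle" is actually the routine part and coincides with the paper's first consistency estimate: the jump of $\bs I_h^s\curl\widetilde{\bs u}$ equals the jump of the nonconforming-$P_1$ interpolation error of $\curl\widetilde{\bs u}\in\bs H^2(\Omega;\mathbb R^3)$, so the $h^{3/2}$ face bound together with the $\bs Q_F^0$ subtraction on $\partial_n\curl\bs u$ is standard and gives $h^2|\curl\bs u|_2|\curl\widetilde{\bs u}|_2$ as you expect.

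The genuine soft spot is your $\bs f$-residual $(\bs f,\boldsymbol\Pi_h^{gc}\widetilde{\bs u}-\bs I_h^c\boldsymbol\Pi_h^{gc}\widetilde{\bs u})$, which is precisely where your route departs from the paper's. The only estimate in the paper for this discrepancy is \eqref{eq:IKc3}, which yields $h\|\bs f\|_0\|\bs I_h^s\curl\widetilde{\bs u}\|_0\lesssim h\|\bs f\|_0\|\curl\widetilde{\bs u}\|_0$; this matches $h^{k+1}$ only for $k=0$ and is one order short of your claimed $O(h^{k+1}\|\bs f\|_0\|\curl\widetilde{\bs u}\|_2)$ when $k=1$. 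Your parenthetical hint (commuting diagram plus RT interpolation) controls only the elementwise curl of the discrepancy, namely $\bs I_h^s\curl\widetilde{\bs u}-\bs I_h^d\curl\widetilde{\bs u}$, of size $h|\curl\widetilde{\bs u}|_1$; to convert this into an $L^2$ bound on the discrepancy itself you need an additional local lemma that is not in the paper, for instance that on the kernel of the edge degrees of freedom in $\bs W_k(K)$ one has $\|\bs v\|_{0,K}\lesssim h_K\|\curl\bs v\|_{0,K}$ (equivalently, that $I-\bs I_K^c$ annihilates $\nabla\mathbb P_{k+1}(K)\oplus\big((\bs x-\bs x_K)\times\mathbb P_0(K;\mathbb R^3)\big)$, so one may subtract the constant part of $\curl\widetilde{\bs u}$ before invoking \eqref{eq:poincareoperatorprop2} and \eqref{eq:IKc31}). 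With such a lemma your term is indeed $O(h^2\|\bs f\|_0|\curl\widetilde{\bs u}|_1)$ and the argument closes for both $k$. The paper avoids the issue entirely: its $\bs f$-residual is $(\bs f,\widetilde{\bs u}-\boldsymbol\Pi_h^{gc}\widetilde{\bs u})$, and since $\div\bs f=0$ the gradient part $\nabla(\widetilde v_1-I_h^{SZ}\widetilde v_1)$ drops out, leaving $(\bs f,\widetilde{\bs u}_2-\bs I_h^{gc}\widetilde{\bs u}_2)$, which \eqref{eq:estimateIKgc} and \eqref{eq:v1v2regular} bound by $h^{k+1}\|\bs f\|_0|\curl\widetilde{\bs u}|_1$ directly. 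So either supply the missing local lemma or switch to the paper's handling of the $\bs f$-term; as written, your proposal is complete only for $k=0$.
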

\begin{proof}
It follows from
\eqref{eq:estimateIKs} that
\begin{align*}
&\sum_{K\in\mathcal T_h}(\partial_n(\curl\bs u), \boldsymbol I_h^{s}\curl\widetilde{\bs u}-\curl\widetilde{\bs u})_{\partial K} \\
=&\sum_{K\in\mathcal T_h}\sum_{F\in\mathcal F(K)}(\partial_n(\curl\bs u)-\bs Q_F^0\partial_n(\curl\bs u), \boldsymbol I_h^{s}\curl\widetilde{\bs u}-\curl\widetilde{\bs u})_{F} \\
\lesssim & h^2|\curl\bs u|_2|\curl\widetilde{\bs u}|_2.
\end{align*}
Applying \eqref{eq:estimateIKs} again, we get
\begin{align*}
&(\nabla\curl\bs u, \nabla_h(\boldsymbol I_h^{s}\curl\widetilde{\bs u}-\curl\widetilde{\bs u})) \\
=&\sum_{K\in\mathcal T_h}(\partial_n(\curl\bs u), \boldsymbol I_h^{s}\curl\widetilde{\bs u}-\curl\widetilde{\bs u})_{\partial K}-(\Delta\curl\bs u, \boldsymbol I_h^{s}\curl\widetilde{\bs u}-\curl\widetilde{\bs u}) \\
\lesssim & h^2|\curl\bs u|_2|\curl\widetilde{\bs u}|_2.
\end{align*}
Due to \eqref{eq:estimateIKgc} and the fact $\div\bs f=0$, we have
\[
(\bs f, \widetilde{\bs u}-\boldsymbol \Pi_h^{gc}\widetilde{\bs u}) =(\bs f, \widetilde{\bs u}_2-\boldsymbol I_h^{gc}\widetilde{\bs u}_2) \lesssim h^{k+1}\|\bs f\|_0\|\widetilde{\bs u}_2\|_2\lesssim h^{k+1}\|\bs f\|_0 |\curl \widetilde{\bs u}|_1.
\]
Combining the last two inequalities, \eqref{eq:quadcurlmfem1reduce} and \eqref{eq:commutingPigcIs} implies
\begin{align*}
&\quad\; (\nabla_h\curl_h(\bs u - \bs u_h), \nabla_h\curl_h\boldsymbol \Pi_h^{gc}\widetilde{\bs u})\\
&=(\nabla\curl\bs u, \nabla_h\boldsymbol I_h^{s}\curl\widetilde{\bs u})- (\bs f, \boldsymbol \Pi_h^{gc}\widetilde{\bs u}) \\
&=(\nabla\curl\bs u, \nabla_h(\boldsymbol I_h^{s}\curl\widetilde{\bs u}-\curl\widetilde{\bs u}))+ (\bs f, \widetilde{\bs u}-\boldsymbol \Pi_h^{gc}\widetilde{\bs u}) \\
&\lesssim h^2|\curl\bs u|_2|\curl\widetilde{\bs u}|_2+ h^{k+1}\|\bs f\|_0 |\curl \widetilde{\bs u}|_1.
\end{align*}
Employing \eqref{eq:commutingPigcIs} and \eqref{eq:estimateIKs}, we get
\begin{align*}
&\quad(\nabla_h\curl_h(\bs u - \bs u_h), \nabla_h\curl_h(\widetilde{\bs u}-\boldsymbol \Pi_h^{gc}\widetilde{\bs u})) \\
&=(\nabla_h\curl_h(\bs u - \bs u_h), \nabla_h(\curl\widetilde{\bs u}-\boldsymbol I_h^{s}\curl\widetilde{\bs u})) \\
&\leq |\curl_h(\bs u - \bs u_h)|_{1,h}|\curl\widetilde{\bs u}-\boldsymbol I_h^{s}\curl\widetilde{\bs u}|_{1,h}\lesssim h |\curl_h(\bs u - \bs u_h)|_{1,h}|\curl\widetilde{\bs u}|_{2}.
\end{align*}
It holds from the sum of the last two inequalities that
\begin{align*}
&(\nabla_h\curl_h(\bs u - \bs u_h), \nabla\curl\widetilde{\bs u}) \\
\lesssim &(h^2|\curl\bs u|_2+h |\curl_h(\bs u - \bs u_h)|_{1,h}+ h^{k+1}\|\bs f\|_0 )\|\curl\widetilde{\bs u}\|_{2}.
\end{align*}
Thanks to \eqref{eq:curlWhweakcontinuity}, we obtain
\begin{align*}
&-\sum_{K\in\mathcal T_h}(\curl_h(\bs u - \bs u_h), \partial_n\curl\widetilde{\bs u})_{\partial K} \\
=&-\sum_{K\in\mathcal T_h}\sum_{F\in\mathcal F(K)}((\bs I-\bs Q_{F}^0)\curl_h(\bs u - \bs u_h), (\bs I-\bs Q_{F}^0)\partial_n\curl\widetilde{\bs u})_{\partial K} \\
\lesssim &\, h |\curl_h(\bs u - \bs u_h)|_{1,h}|\curl\widetilde{\bs u}|_{2}.
\end{align*}
Hence we achieve from the last two inequalities that
\begin{align*}
& -(\curl_h(\bs u - \bs u_h), \Delta\curl\widetilde{\bs u}) \\
\lesssim &(h^2|\curl\bs u|_2+h |\curl_h(\bs u - \bs u_h)|_{1,h}+ h^{k+1}\|\bs f\|_0 )\|\curl\widetilde{\bs u}\|_{2}.
\end{align*}
On the other hand, it follows from \eqref{eq:dualrhs1} and\eqref{eq:20200815-3} that
\begin{align*}
&\quad\;\|\bs I_h^d\curl_h(\bs u - \bs u_h)\|_0^2=-(\bs I_h^d\curl_h(\bs u - \bs u_h), \Delta\curl\widetilde{\bs u}) \\
&=((\bs I-\bs I_h^d)\curl_h(\bs u - \bs u_h), \Delta\curl\widetilde{\bs u})-(\curl_h(\bs u - \bs u_h), \Delta\curl\widetilde{\bs u}) \\
&\lesssim (h^2|\curl\bs u|_2+h |\curl_h(\bs u - \bs u_h)|_{1,h}+ h^{k+1}\|\bs f\|_0 )\|\curl\widetilde{\bs u}\|_{2},
\end{align*}
which together with \eqref{eq:quadcurldualregular} yields
\[
\|\bs I_h^d\curl_h(\bs u - \bs u_h)\|_0\lesssim h^2|\curl\bs u|_2+h |\curl_h(\bs u - \bs u_h)|_{1,h}+ h^{k+1}\|\bs f\|_0 .
\]
Hence
\[
\|\curl_h(\bs u - \bs u_h)\|_0\lesssim h^2|\curl\bs u|_2+h |\curl_h(\bs u - \bs u_h)|_{1,h}+ h^{k+1}\|\bs f\|_0 .
\]
Finally \eqref{eq:errorprioruhcurl} follows from \eqref{eq:errorprioruh}.
\end{proof}

\section{Decoupling of the mixed finite element methods}\label{sec:decoupling}
In this section, we will present an equivalent decoupled discretization of the mixed finite element methods \eqref{eq:quadcurlmfem1}-\eqref{eq:quadcurlmfem2} as the decoupled Morley element method for the biharmonic equation in \cite{Huang2010}, based on which a fast solver is suggested.    

\subsection{Decoupling}
In the continuous level,
the mixed formulation \eqref{eq:quadcurlmixed1}-\eqref{eq:quadcurlmixed2} of the quad-$\curl$ problem \eqref{eq:quadcurlgrad} can be decoupled into the following system \cite{ChenHuang2018,Zhang2018a}:
find $\bs w\in \bs H_0(\curl, \Omega)$, $\lambda\in H_0^{1}(\Omega)$, $\boldsymbol{\phi}\in \boldsymbol H_0^1(\Omega; \mathbb{R}^3)$, $p\in L_0^{2}(\Omega)$, $\bs u\in \bs H_0(\curl, \Omega)$ and $\sigma\in H_0^{1}(\Omega)$ such that
\begin{align*}
(\curl  \bs w, \curl  \bs v)+(\bs v,\nabla \lambda)&=(\bs f, \bs v)  \quad\quad\quad\quad \forall~\bs v\in \bs H_0(\curl, \Omega),  \\ 
(\bs w,\nabla \tau) &= 0 \quad\quad\quad\quad\quad\;\;\;\, \forall~ \tau\in H_0^{1}(\Omega),  \\ 
(\boldsymbol\nabla\boldsymbol\phi, \boldsymbol\nabla\boldsymbol\psi) + (\div\boldsymbol\psi, p) & =(\curl  \bs w, \boldsymbol\psi) \quad\;\;\, \forall~\boldsymbol{\psi}\in  \boldsymbol H_0^1(\Omega; \mathbb{R}^3), \\ 
(\div\boldsymbol\phi, q) &= 0 \quad\quad\quad\quad\quad\;\;\;\, \forall~ q\in L_0^{2}(\Omega),  \\ 
(\curl \bs  u, \curl \bs  \chi)+(\bs \chi,\nabla\sigma)&= (\boldsymbol{\phi}, \curl \bs \chi) \quad\quad \forall~\bs \chi\in \bs H_0(\curl, \Omega), \\ 
(\bs u,\nabla \mu) &= 0 \quad\quad\quad\quad\quad\;\;\;\, \forall~ \mu\in H_0^{1}(\Omega).  
\end{align*}

Thanks to the discrete Stokes complex \eqref{eq:Stokescomplex3dncfem0}, the mixed finite element methods \eqref{eq:quadcurlmfem1}-\eqref{eq:quadcurlmfem2} can also be decoupled to
find $\bs w_h\in \bs W_{h0}$, $\lambda_h\in V_{h0}^g$, $\boldsymbol{\phi}_h\in \bs V_{h0}^s$, $p_h\in\mathcal Q_{h0}$, $\bs u_h\in \bs W_{h0}$ and $\sigma_h\in V_{h0}^g$ such that
\begin{align}
(\curl_h\bs w_h, \curl_h\bs v_h)+(\bs v_h,\nabla \lambda_h)&=(\bs f, \bs v_h)  \qquad\quad\quad\;\; \forall~\bs v_h\in \bs W_{h0},  \label{eq:quarticcurlfemdecouple1}\\
(\bs w_h,\nabla \tau_h) &= 0 \quad\quad\quad\quad\quad\quad\;\;\;\, \forall~ \tau_h\in V_{h0}^g,  \label{eq:quarticcurlfemdecouple2}\\
(\boldsymbol\nabla_h\boldsymbol\phi_h, \boldsymbol\nabla_h\boldsymbol\psi_h) + (\div_h\boldsymbol\psi_h, p_h) & =(\curl_h\bs w_h, \boldsymbol\psi_h) \quad\; \forall~\boldsymbol{\psi}_h\in\bs V_{h0}^s, \label{eq:quarticcurlfemdecouple3}\\
(\div_h\boldsymbol\phi_h, q_h) &= 0 \quad\quad\quad\quad\quad\quad\quad \forall~ q_h\in\mathcal Q_{h0},  \label{eq:quarticcurlfemdecouple4}\\
(\curl_h\bs  u_h, \curl_h\bs \chi_h)+(\bs \chi_h,\nabla\sigma_h)&= (\boldsymbol{\phi}_h, \curl_h\bs \chi_h) \quad\;\; \forall~\bs \chi_h\in \bs W_{h0}, \label{eq:quarticcurlfemdecouple5} \\
(\bs u_h,\nabla \mu_h) &= 0 \quad\quad\quad\quad\quad\quad\quad \forall~ \mu_h\in V_{h0}^g.  \label{eq:quarticcurlfemdecouple6}
\end{align}
Both \eqref{eq:quarticcurlfemdecouple1}-\eqref{eq:quarticcurlfemdecouple2} and \eqref{eq:quarticcurlfemdecouple5}-\eqref{eq:quarticcurlfemdecouple6}  are mixed finite element methods for the Maxwell equation. From the discrete Poincar\'e inequality \eqref{eq:Ihc} and the fact $\nabla V_{h0}^g\subset\bs W_{h0}^g$, we have the discrete stability
\[
\|\widetilde{\bs w}_h\|_{H_h(\curl)}+ |\widetilde\lambda_h|_1 \lesssim \sup_{(\bs v_h, \tau_h)\in\boldsymbol W_{h0}\times V_{h0}^g}\frac{(\curl_h\widetilde{\bs w}_h, \curl_h\bs v_h)+(\bs v_h,\nabla\widetilde\lambda_h) + (\widetilde{\bs w}_h,\nabla \tau_h)}{\|\bs v_h\|_{H_h(\curl)}+ |\tau_h|_1} 
\]
for any $\widetilde{\bs w}_h\in \boldsymbol W_{h0}$ and $\widetilde{\lambda}_h\in V_{h0}^g$, where the squared norm
\[
\|\bs v_h\|_{H_h(\curl)}^2:= \|\bs v_h\|_{0}^2 + \|\curl_h\bs v_h\|_{0}^2.
\]
Hence both mixed finite element methods \eqref{eq:quarticcurlfemdecouple1}-\eqref{eq:quarticcurlfemdecouple2} and \eqref{eq:quarticcurlfemdecouple5}-\eqref{eq:quarticcurlfemdecouple6} are well-posed.
The discrete method \eqref{eq:quarticcurlfemdecouple3}-\eqref{eq:quarticcurlfemdecouple4} is exactly the nonconforming $P_1$-$P_0$ element method for the Stokes equation.

By replacing $\bs v_h$ in \eqref{eq:quarticcurlfemdecouple1} with $\nabla\mu_h$ for any $\mu_h\in V_{h0}^g$, we obtain $\lambda_h=0$ from the fact $\div\bs f=0$. Similarily we achieve $\sigma_h=0$ from \eqref{eq:quarticcurlfemdecouple5}. Then \eqref{eq:quarticcurlfemdecouple1} and \eqref{eq:quarticcurlfemdecouple5} will be reduced to
\begin{equation}\label{eq:quarticcurlfemdecouple1reduce}
(\curl_h\bs w_h, \curl_h\bs v_h)=(\bs f, \bs v_h)  \quad \forall~\bs v_h\in \bs W_{h0},  
\end{equation}
and
\begin{equation}\label{eq:quarticcurlfemdecouple5reduce}
(\curl_h\bs  u_h, \curl_h\bs \chi_h)= (\boldsymbol{\phi}_h, \curl_h\bs \chi_h) \quad\forall~\bs \chi_h\in \bs W_{h0}. 
\end{equation}

\begin{theorem}
Let $(\bs w_h, 0, \boldsymbol{\phi}_h, p_h, \bs u_h, 0)\in \bs W_{h0}\times V_{h0}^g\times\bs V_{h0}^s\times \mathcal Q_{h0}\times\bs W_{h0}\times V_{h0}^g$ be the solution of the discrete methods \eqref{eq:quarticcurlfemdecouple1}-\eqref{eq:quarticcurlfemdecouple6}. Then $(\bs u_h, 0)$ is the solution of the mixed finite element methods \eqref{eq:quadcurlmfem1}-\eqref{eq:quadcurlmfem2}.
\end{theorem}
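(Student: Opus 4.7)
The plan is to verify the two equations \eqref{eq:quadcurlmfem1}--\eqref{eq:quadcurlmfem2} of the mixed method directly from the decoupled system, using the facts (already noted before the theorem) that $\lambda_h=0$ and $\sigma_h=0$. Equation \eqref{eq:quadcurlmfem2} is immediate from \eqref{eq:quarticcurlfemdecouple6}, so the only real task is to establish $(\nabla_h\curl_h\bs u_h,\nabla_h\curl_h\bs v_h)=(\bs f,\bs v_h)$ for every $\bs v_h\in\bs W_{h0}$, which is \eqref{eq:quadcurlmfem1} with $\lambda_h=0$.

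The first step is to route the right-hand side through the decoupled Maxwell and Stokes solves. Given $\bs v_h\in\bs W_{h0}$, the nonconforming Stokes complex \eqref{eq:Stokescomplex3dncfem0} yields $\curl_h\bs v_h\in\bs V_{h0}^s$ with $\div_h(\curl_h\bs v_h)=0$. Testing the Stokes block \eqref{eq:quarticcurlfemdecouple3} with $\boldsymbol\psi_h=\curl_h\bs v_h$ annihilates the pressure term and gives $(\nabla_h\boldsymbol\phi_h,\nabla_h\curl_h\bs v_h)=(\curl_h\bs w_h,\curl_h\bs v_h)$, which together with \eqref{eq:quarticcurlfemdecouple1reduce} yields $(\nabla_h\boldsymbol\phi_h,\nabla_h\curl_h\bs v_h)=(\bs f,\bs v_h)$. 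Consequently it is enough to prove the identification $\boldsymbol\phi_h=\curl_h\bs u_h$, at which point the target identity collapses out.

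The second step establishes this identification via exactness of \eqref{eq:Stokescomplex3dncfem0}. From \eqref{eq:quarticcurlfemdecouple4} one gets that $\div_h\boldsymbol\phi_h$ is orthogonal to all zero-mean piecewise constants; since $\div_h\boldsymbol\phi_h$ is itself piecewise constant this forces it to be a global constant, and using the nonconforming boundary/weak-continuity property of $\bs V_{h0}^s$ (the face-averages of $\boldsymbol\phi_h$ are single-valued on interior faces and vanish on boundary faces) one sees $\int_\Omega\div_h\boldsymbol\phi_h=\sum_K\int_{\partial K}\boldsymbol\phi_h\cdot\bs n=0$, so that constant is $0$. Thus $\boldsymbol\phi_h\in\bs V_{h0}^s\cap\ker(\div_h)=\curl_h\bs W_{h0}$ by exactness, so $\boldsymbol\phi_h=\curl_h\bs z_h$ for some $\bs z_h\in\bs W_{h0}$. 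Testing \eqref{eq:quarticcurlfemdecouple5reduce} with $\bs\chi_h=\bs u_h-\bs z_h\in\bs W_{h0}$ then gives $\|\curl_h(\bs u_h-\bs z_h)\|_0^2=0$, so $\curl_h\bs u_h=\curl_h\bs z_h=\boldsymbol\phi_h$, and \eqref{eq:quadcurlmfem1} follows.

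I expect the main technical point to be the verification that $\div_h\boldsymbol\phi_h$ vanishes pointwise (not merely in $L_0^2$); once the constant mode is ruled out using the $\bs V_{h0}^s$ boundary/weak-continuity, the rest is a short sequence of substitutions that exploit nothing beyond the exactness of the discrete Stokes complex and the three variational identities \eqref{eq:quarticcurlfemdecouple1reduce}, \eqref{eq:quarticcurlfemdecouple3}, \eqref{eq:quarticcurlfemdecouple5reduce}.
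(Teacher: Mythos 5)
Your proposal is correct and follows essentially the same route as the paper's proof: use exactness of the discrete Stokes complex \eqref{eq:Stokescomplex3dncfem0} (after checking $\div_h\boldsymbol\phi_h=0$) to write $\boldsymbol\phi_h$ as a discrete curl, invoke \eqref{eq:quarticcurlfemdecouple5reduce} to identify $\curl_h\bs u_h=\boldsymbol\phi_h$, and then test \eqref{eq:quarticcurlfemdecouple3} with $\boldsymbol\psi_h=\curl_h\bs v_h$ together with \eqref{eq:quarticcurlfemdecouple1reduce} to recover \eqref{eq:quadcurlmfem1reduce}. The only differences are cosmetic: you spell out why $\div_h\boldsymbol\phi_h$ vanishes pointwise and only conclude equality of the curls, whereas the paper takes its preimage $\widetilde{\bs u}_h$ in $\mathcal K_h^d$ and asserts $\widetilde{\bs u}_h=\bs u_h$.
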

\begin{proof}
Since \eqref{eq:quarticcurlfemdecouple6} and \eqref{eq:quadcurlmfem2} are same, we only have to show $\bs u_h\in \mathcal K_h^d$ satisfies~\eqref{eq:quadcurlmfem1reduce}.
It follows from \eqref{eq:quarticcurlfemdecouple4} and the complex~\eqref{eq:Stokescomplex3dncfem0} that there exists $\widetilde{\bs u}_h\in \mathcal K_h^d$ satisfying $\boldsymbol\phi_h=\curl_h\widetilde{\bs u}_h$, which together with \eqref{eq:quarticcurlfemdecouple5reduce} yields
\[
(\curl_h(\bs u_h-\widetilde{\bs u}_h), \curl_h\bs \chi_h)= 0 \quad\forall~\bs \chi_h\in \bs W_{h0}. 
\]
Hence $\widetilde{\bs u}_h=\bs u_h$ and $\boldsymbol\phi_h=\curl_h\bs u_h$. Taking $\bs \psi_h=\curl_h\bs v_h$ in \eqref{eq:quarticcurlfemdecouple3} with $\bs v_h\in\bs W_{h0}$, we derive from~\eqref{eq:quarticcurlfemdecouple1reduce} that
\[
(\boldsymbol\nabla_h\curl_h\bs u_h, \boldsymbol\nabla_h\curl_h\bs v_h)  =(\curl_h\bs w_h, \curl_h\bs v_h)=(\bs f, \bs v_h).
\]
Thus the discrete methods \eqref{eq:quarticcurlfemdecouple1}-\eqref{eq:quarticcurlfemdecouple6} and the mixed methods \eqref{eq:quadcurlmfem1}-\eqref{eq:quadcurlmfem2} are equivalent.
\end{proof}

\subsection{A fast solver}
We discuss a fast solver for the mixed methods \eqref{eq:quadcurlmfem1}-\eqref{eq:quadcurlmfem2} in this subsection.
The equivalence between the mixed methods \eqref{eq:quadcurlmfem1}-\eqref{eq:quadcurlmfem2} and
the mixed methods~\eqref{eq:quarticcurlfemdecouple1}-\eqref{eq:quarticcurlfemdecouple6} suggests fast solvers for the mixed finite element methods \eqref{eq:quadcurlmfem1}-\eqref{eq:quadcurlmfem2}.
We can solve the mixed method \eqref{eq:quarticcurlfemdecouple1}-\eqref{eq:quarticcurlfemdecouple2}, the mixed method \eqref{eq:quarticcurlfemdecouple3}-\eqref{eq:quarticcurlfemdecouple4} and the mixed method \eqref{eq:quarticcurlfemdecouple5}-\eqref{eq:quarticcurlfemdecouple6} sequentially.
The mixed methods \eqref{eq:quarticcurlfemdecouple1}-\eqref{eq:quarticcurlfemdecouple2} and~\eqref{eq:quarticcurlfemdecouple5}-\eqref{eq:quarticcurlfemdecouple6} for the Maxwell equation can be efficiently solved by the solver in
\cite[Section 4.4]{ChenWuZhongZhou2018}. And for the the mixed method \eqref{eq:quarticcurlfemdecouple3}-\eqref{eq:quarticcurlfemdecouple4} of the Stokes equation, we can adopt the block diagonal preconditioner \cite{ElmanSilvesterWathen2005} or the approximate block-factorization preconditioner \cite{ChenHuHuang2018}.

Finally we demonstrate the fast solver for the mixed methods~\eqref{eq:quarticcurlfemdecouple1}-\eqref{eq:quarticcurlfemdecouple2} and~\eqref{eq:quarticcurlfemdecouple5}-\eqref{eq:quarticcurlfemdecouple6}.
To this end, define the inner product
\[
\langle \lambda_h, \mu_h\rangle:=\sum_{i=1}^{n_g}\lambda_i\mu_i\|\psi_i\|_0^2, \textrm{ where } \lambda_h=\sum_{i=1}^{n_g}\lambda_i\psi_i, \mu_h=\sum_{i=1}^{n_g}\mu_i\psi_i
\]
with $\{\psi_i\}_1^{n_g}$ being the basis functions of $V_{h0}^g$. 
The matrix of $\langle\lambda_h, \mu_h\rangle$ is just the diagonal of the mass matrix of $(\lambda_h, \mu_h)$.
Then we introduce the following two mixed methods
\begin{align}
(\curl_h\bs w_h, \curl_h\bs v_h)+(\bs v_h,\nabla \lambda_h)&=(\bs f, \bs v_h)  \qquad\quad\quad\;\; \forall~\bs v_h\in \bs W_{h0},  \label{eq:quarticcurlfemdecouple1equiv}\\
(\bs w_h,\nabla \tau_h) - \langle\lambda_h, \tau_h\rangle&= 0 \quad\quad\quad\quad\quad\quad\;\;\;\, \forall~ \tau_h\in V_{h0}^g,  \label{eq:quarticcurlfemdecouple2equiv}
\end{align}
and
\begin{align}
(\curl_h\bs  u_h, \curl_h\bs \chi_h)+(\bs \chi_h,\nabla\sigma_h)&= (\boldsymbol{\phi}_h, \curl_h\bs \chi_h) \quad\;\; \forall~\bs \chi_h\in \bs W_{h0}, \label{eq:quarticcurlfemdecouple5equiv} \\
(\bs u_h,\nabla \mu_h) - \langle\sigma_h, \mu_h\rangle &= 0 \quad\quad\quad\quad\quad\quad\quad \forall~ \mu_h\in V_{h0}^g.  \label{eq:quarticcurlfemdecouple6equiv}
\end{align}
The well-posedness of the mixed methods \eqref{eq:quarticcurlfemdecouple1equiv}-\eqref{eq:quarticcurlfemdecouple2equiv} and \eqref{eq:quarticcurlfemdecouple5equiv}-\eqref{eq:quarticcurlfemdecouple6equiv} follows from the stability of the mixed methods \eqref{eq:quarticcurlfemdecouple1}-\eqref{eq:quarticcurlfemdecouple2} and \eqref{eq:quarticcurlfemdecouple5}-\eqref{eq:quarticcurlfemdecouple6}.

\begin{lemma}
The mixed method \eqref{eq:quarticcurlfemdecouple1equiv}-\eqref{eq:quarticcurlfemdecouple2equiv} is equivalent to the mixed method \eqref{eq:quarticcurlfemdecouple1}-\eqref{eq:quarticcurlfemdecouple2}.
And
the mixed method \eqref{eq:quarticcurlfemdecouple5equiv}-\eqref{eq:quarticcurlfemdecouple6equiv} is equivalent to the mixed method  \eqref{eq:quarticcurlfemdecouple5}-\eqref{eq:quarticcurlfemdecouple6}.
\end{lemma}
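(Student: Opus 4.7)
The plan is to show that in both pairs the Lagrange multiplier ($\lambda_h$, resp.\ $\sigma_h$) must vanish identically, which collapses the stabilization term $\langle\cdot,\cdot\rangle$ to zero and makes the two systems coincide equation by equation. The key structural fact being exploited is that $\nabla V_{h0}^g \subset \bs W_{h0}$ together with $\curl_h(\nabla\mu_h)=0$, so $\bs v_h=\nabla\mu_h$ is an admissible test vector that annihilates the principal $\curl$--$\curl$ bilinear form.

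For the first pair, I would test the (shared) first equation \eqref{eq:quarticcurlfemdecouple1} and \eqref{eq:quarticcurlfemdecouple1equiv} with $\bs v_h=\nabla\mu_h$ for arbitrary $\mu_h\in V_{h0}^g$. Since $\curl_h\nabla\mu_h=0$, and since the integration by parts $(\bs f,\nabla\mu_h)=-(\div\bs f,\mu_h)+\langle\bs f\cdot\bs n,\mu_h\rangle_{\partial\Omega}=0$ holds by virtue of $\div\bs f=0$ and $\mu_h|_{\partial\Omega}=0$, one obtains $(\nabla\lambda_h,\nabla\mu_h)=0$ for every $\mu_h\in V_{h0}^g$. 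Choosing $\mu_h=\lambda_h$ forces $\lambda_h=0$ in both systems. Consequently $\langle\lambda_h,\tau_h\rangle=0$ for all $\tau_h$, so \eqref{eq:quarticcurlfemdecouple2equiv} reduces to \eqref{eq:quarticcurlfemdecouple2}; conversely, a solution of \eqref{eq:quarticcurlfemdecouple1}--\eqref{eq:quarticcurlfemdecouple2} (which satisfies $\lambda_h=0$) automatically satisfies \eqref{eq:quarticcurlfemdecouple2equiv}. Combined with the well-posedness of both systems noted just above the lemma, this yields the equivalence.

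For the second pair the argument runs in parallel and is in fact slightly cleaner: testing \eqref{eq:quarticcurlfemdecouple5} or \eqref{eq:quarticcurlfemdecouple5equiv} with $\bs\chi_h=\nabla\mu_h$ makes both the $\curl$--$\curl$ term on the left and the term $(\boldsymbol{\phi}_h,\curl_h\nabla\mu_h)$ on the right vanish directly, without any divergence assumption, leaving $(\nabla\sigma_h,\nabla\mu_h)=0$ for every $\mu_h\in V_{h0}^g$, whence $\sigma_h=0$ and then $\langle\sigma_h,\mu_h\rangle=0$. The equations \eqref{eq:quarticcurlfemdecouple5equiv}--\eqref{eq:quarticcurlfemdecouple6equiv} thus collapse to \eqref{eq:quarticcurlfemdecouple5}--\eqref{eq:quarticcurlfemdecouple6}.

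There is no genuine analytic obstacle here: everything hinges on the single algebraic observation that $\nabla\mu_h\in\bs W_{h0}$ lies in the kernel of $\curl_h$. The only minor points to verify are that $\langle\cdot,\cdot\rangle$ is positive definite, which is immediate from its definition as the diagonal of the mass matrix in the nodal basis $\{\psi_i\}$ of $V_{h0}^g$, and that the stabilized systems themselves are well-posed, which is the content of the sentence preceding the lemma. Hence the whole proof reduces to the two short computations above.
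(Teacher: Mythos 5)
Your proof is correct and takes essentially the same approach as the paper: the whole point is that $\nabla V_{h0}^g\subset\bs W_{h0}$ consists of curl-free test functions, so testing the first equation of each pair with $\nabla\mu_h$ (using $\div\bs f=0$ for the first pair, and the vanishing of $(\boldsymbol{\phi}_h,\curl_h\nabla\mu_h)$ for the second) forces $\lambda_h=0$, resp.\ $\sigma_h=0$, whence the stabilization term $\langle\cdot,\cdot\rangle$ drops out and the two systems have the same solution. The paper states this more tersely, simply noting that the solution $(\bs w_h,0)$ of \eqref{eq:quarticcurlfemdecouple1}-\eqref{eq:quarticcurlfemdecouple2} also solves \eqref{eq:quarticcurlfemdecouple1equiv}-\eqref{eq:quarticcurlfemdecouple2equiv} and appealing to the well-posedness of both systems, while you additionally re-derive the vanishing of the multiplier inside the stabilized system; this is a harmless elaboration, not a different argument.
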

\begin{proof}
Suppose $(\bs w_h, 0)\in \bs W_{h0}\times V_{h0}^g$ is the solution of the mixed method \eqref{eq:quarticcurlfemdecouple1}-\eqref{eq:quarticcurlfemdecouple2}.  By the fact $\lambda_h=0$, apparently $(\bs w_h, 0)$ is also the solution of the mixed method \eqref{eq:quarticcurlfemdecouple1equiv}-\eqref{eq:quarticcurlfemdecouple2equiv}. The equivalence between the mixed method \eqref{eq:quarticcurlfemdecouple5equiv}-\eqref{eq:quarticcurlfemdecouple6equiv} and the mixed method  \eqref{eq:quarticcurlfemdecouple5}-\eqref{eq:quarticcurlfemdecouple6} follows similarly.
\end{proof}

Such equivalence in matrix form has been revealed in \cite[(77)-(78)]{ChenWuZhongZhou2018}.
The matrix form of the mixed finite element method \eqref{eq:quarticcurlfemdecouple1equiv}-\eqref{eq:quarticcurlfemdecouple2equiv} is 
\[
\begin{pmatrix}
A & B^{\intercal} \\
B & -D
\end{pmatrix}\begin{pmatrix}
\bs w_h \\
\lambda_h
\end{pmatrix}=\begin{pmatrix}
\bs f \\
0
\end{pmatrix}.
\]
Here we still use $\bs w_h$, $\lambda_h$ and $\bs f$ to represent the vector forms of $\bs w_h$, $\lambda_h$ and $(\bs f, \bs v_h)$ for ease of presentation. Noting that $D$ is diagonal, we get
\[
(A+B^{\intercal}D^{-1}B)\bs w_h=\bs f. 
\]
The Schur complement $A+B^{\intercal}D^{-1}B$ corresponds to the symmetric matrix of a discontinuous Galerkin method for the vector Laplacian, which is positive definite and can be solved by the conjugate gradient method with the HX preconditioner in \cite{HiptmairXu2007}.


\section{Numerical results}\label{sec:numerresults}
In this section, we perform a numerical experiment to demonstrate the theoretical results of the mixed finite element methods \eqref{eq:quadcurlmfem1}-\eqref{eq:quadcurlmfem2}. Let $\Omega=(0,1)^3$. And choose the function $\bs f$ in \eqref{eq:quadcurl} such that the exact solution of \eqref{eq:quadcurl} is
\[
\bs u = \curl\begin{pmatrix}
0\\
0\\
\sin^3(\pi x)\sin^3(\pi y)\sin^3(\pi z)
\end{pmatrix}.
\]
We take uniform triangulations on $\Omega$. Set $k=0$.

Numerical results of errors $\|\bs u-\bs u_h\|_0$,  $\|\curl_h(\bs u-\bs u_h)\|_0$ and $|\curl_h(\bs u-\bs u_h)|_{1,h}$ with respect to $h$ for $k=0$ are shown in Table~\ref{table:errorenery}, from which we can see that they all achieve the optimal convergence rates numerically and agree with the theoretical error estimates in \eqref{eq:errorprioruh} and \eqref{eq:errorprioruhcurl}. It is also observed from Table~\ref{table:errorenery} that
$\|\curl_h(\bs u-\bs u_h)\|_0=O(h^2)$ numerically, which is one order higher than the theoretical order in~\eqref{eq:errorprioruhcurl}.
\begin{table}[htbp]
\caption{Errors $\|\bs u-\bs u_h\|_0$,  $\|\curl_h(\bs u-\bs u_h)\|_0$ and $|\curl_h(\bs u-\bs u_h)|_{1,h}$ for $k=0$ and different $h$.}\label{table:errorenery}
\begin{tabular}{lllllll}
\hline\noalign{\smallskip}
$h$ & $\|\bs u-\bs u_h\|_0$ & order & $\|\curl_h(\bs u-\bs u_h)\|_0$ & order & $|\curl_h(\bs u-\bs u_h)|_{1,h}$ & order \\
\noalign{\smallskip}\hline\noalign{\smallskip}
$2^{-1}$ & 1.025E+00 & $-$ & 1.050E+01 & $-$ & 1.076E+02 & $-$ \\
$2^{-2}$ & 9.687E$-$01 & 0.08 & 5.306E+00 & 0.98 & 9.099E+01 & 0.24 \\
$2^{-3}$ & 3.767E$-$01 & 1.36 & 1.618E+00 & 1.71 & 5.374E+01 & 0.76 \\
$2^{-4}$ & 1.640E$-$01 & 1.20 & 4.311E$-$01 & 1.91 & 2.820E+01 & 0.93 \\
$2^{-5}$ & 7.828E$-$02 & 1.07 & 1.097E$-$01 & 1.97 & 1.428E+01 & 0.98 \\
\noalign{\smallskip}\hline
\end{tabular}
\end{table}

\section*{Acknowledgement}
We greatly appreciate the anonymous reviewers for valuable suggestions and careful comments.

%


 \appendix
 \section{Regularity of the quad curl problem on convex domains}\label{sec:appendixA}

We will prove the regularity of problem \eqref{eq:quadcurlgrad} under the assumption $\bs f\in \bs H^{-1}(\div, \Omega)$. Similar regularity can be found in \cite[Theorem 3.5]{Zhang2018a} when $\bs f\in \boldsymbol{L}^{2}(\Omega;\mathbb R^3)$.
\begin{lemma}\label{lem:bigradcurlregularity}
Assume domain $\Omega$ is convex. Let $\bs  u\in\bs H_0(\grad\curl, \Omega)$ be the solution of problem \eqref{eq:quadcurlgrad} with the divergence-free right hand side $\bs f\in \bs H^{-1}(\div, \Omega)$, then
\begin{equation}\label{eq:appendixregularity}
\|\bs u\|_1+\|\curl \bs u\|_2\lesssim \|\bs f \|_{-1}.
\end{equation}
\end{lemma}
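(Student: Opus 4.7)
The plan is to invoke the continuous analogue of the decoupled system displayed at the start of Section~5. After eliminating the Lagrange multipliers $\lambda$ and $\sigma$ (both vanish thanks to $\div\bs f=0$, by the same argument as in the equivalence theorem for the discrete system), that system identifies $\bs z=\curl^2\bs u$, $\bs\phi=\curl\bs u$, and $\bs u$ itself as the respective solutions of three subproblems: (i)~a Maxwell problem with datum $\bs f$; (ii)~a Stokes problem with datum $\curl\bs z$; (iii)~a Maxwell problem whose curl equals $\bs\phi$. The desired regularity is then obtained by chaining elliptic regularity of these three subproblems on the convex domain~$\Omega$.

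First I would execute Step~1. Seek $\bs z\in\bs H_0(\curl,\Omega)$ with $\div\bs z=0$ satisfying $(\curl\bs z,\curl\bs v)=\langle\bs f,\bs v\rangle$ for every divergence-free $\bs v\in\bs H_0(\curl,\Omega)$, where $\langle\cdot,\cdot\rangle$ denotes the duality of $\bs H^{-1}(\div,\Omega)$ with $\bs H_0(\curl,\Omega)$ recalled from \cite{ChenHuang2018}. Convexity of $\Omega$ together with the embedding $\bs H_0(\curl,\Omega)\cap\ker(\div)\hookrightarrow\bs H^1(\Omega;\mathbb R^3)$ of \cite{AmroucheBernardiDaugeGirault1998} yields $\|\bs z\|_1\lesssim\|\curl\bs z\|_0$; testing the weak formulation with $\bs v=\bs z$ and applying the duality bound then gives
\[
\|\bs z\|_1+\|\curl\bs z\|_0\lesssim\|\bs f\|_{-1}.
\]
For Step~2, let $(\bs\phi,p)\in\bs H_0^1(\Omega;\mathbb R^3)\times L_0^2(\Omega)$ solve the standard Stokes system with right-hand side $\curl\bs z\in\bs L^2(\Omega)$. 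Classical convex-domain Stokes regularity furnishes $\bs\phi\in\bs H^2(\Omega;\mathbb R^3)$ with $\|\bs\phi\|_2\lesssim\|\curl\bs z\|_0\lesssim\|\bs f\|_{-1}$. Finally, for Step~3, the third block of the decoupled system yields $\bs u\in\bs H_0(\curl,\Omega)$ with $\div\bs u=0$ and $\curl\bs u=\bs\phi$ (the identification $\curl\bs u=\bs\phi$ follows, as in Section~5, from testing with $\bs\chi$ whose curl ranges over a dense subspace). Hence $\|\curl\bs u\|_2=\|\bs\phi\|_2\lesssim\|\bs f\|_{-1}$, and a second application of \cite{AmroucheBernardiDaugeGirault1998} gives $\|\bs u\|_1\lesssim\|\curl\bs u\|_0\lesssim\|\bs f\|_{-1}$. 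Summing delivers~\eqref{eq:appendixregularity}.

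The main obstacle will be making the duality pairing $\langle\bs f,\bs v\rangle$ in Step~1 rigorous: since $\bs f$ is only assumed in $\bs H^{-1}(\div,\Omega)$ rather than $\bs L^2$, one must verify that this pairing indeed coincides with the weak right-hand side of the Maxwell subproblem and is controlled by $\|\bs f\|_{-1}(\|\bs v\|_0+\|\curl\bs v\|_0)$ on the divergence-free test space, so that the resulting estimate is in the correct norm. A secondary point, easy but worth writing out, is to transcribe the discrete equivalence argument of Section~5 into the continuous level to justify that the decoupled triple $(\bs z,\bs\phi,\bs u)$ really recovers the solution of~\eqref{eq:quadcurlgrad}.
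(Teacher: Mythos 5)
Your proposal is correct and takes essentially the same route as the paper's proof: the paper also decouples \eqref{eq:quadcurlgrad} into the continuous Maxwell--Stokes--Maxwell system (citing \cite{ChenHuang2018} for the equivalence), uses the convex-domain embedding $\bs H_0(\curl,\Omega)\cap\bs H(\div,\Omega)\hookrightarrow\bs H^1(\Omega;\mathbb R^3)$ for $\bs w$ and $\bs u$, convex-domain Stokes regularity for $\bs\phi$, and the identity $\bs\phi=\curl\bs u$. Your worry about the right-hand side is handled exactly as you suggest: $\langle\bs f,\bs v\rangle$ is by definition the duality between $\bs H^{-1}(\div,\Omega)$ and $\bs H_0(\curl,\Omega)$, so $\|\curl\bs w\|_0\lesssim\|\bs f\|_{-1}$ follows from the dual-norm bound together with the Poincar\'e-type inequality on the divergence-free subspace.
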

\begin{proof}
Due to the framework in \cite{ChenHuang2018}, the problem \eqref{eq:quadcurlgrad} can be equivalently decoupled into the following system:
find $\bs w\in \bs H_0(\curl, \Omega)$, $\lambda\in H_0^{1}(\Omega)$, $\boldsymbol{\phi}\in \boldsymbol H_0^1(\Omega; \mathbb{R}^3)$, $p\in L_0^{2}(\Omega)$, $\bs u\in \bs H_0(\curl, \Omega)$ and $\sigma\in H_0^{1}(\Omega)$ such that
\begin{align}
(\curl  \bs w, \curl  \bs v)+(\bs v,\nabla \lambda)&=\langle\bs f, \bs v\rangle  \quad\quad\quad\quad \forall~\bs v\in \bs H_0(\curl, \Omega),  \label{eq:bigradcurldecouple1}\\
(\bs w,\nabla \tau) &= 0 \quad\quad\quad\quad\quad\;\;\;\, \forall~ \tau\in H_0^{1}(\Omega),  \label{eq:bigradcurldecouple2}\\
(\boldsymbol\nabla\boldsymbol\phi, \boldsymbol\nabla\boldsymbol\psi) + (\div\boldsymbol\psi, p) & =(\curl  \bs w, \boldsymbol\psi) \quad\;\;\, \forall~\boldsymbol{\psi}\in  \boldsymbol H_0^1(\Omega; \mathbb{R}^3), \label{eq:bigradcurldecouple3}\\
(\div\boldsymbol\phi, q) &= 0 \quad\quad\quad\quad\quad\;\;\;\, \forall~ q\in L_0^{2}(\Omega),  \label{eq:bigradcurldecouple4}\\
(\curl \bs  u, \curl \bs  \chi)+(\bs \chi,\nabla\sigma)&= (\boldsymbol{\phi}, \curl \bs \chi) \quad\quad \forall~\bs \chi\in \bs H_0(\curl, \Omega),  \label{eq:bigradcurldecouple5} \\
(\bs u,\nabla \mu) &= 0 \quad\quad\quad\quad\quad\;\;\;\, \forall~ \mu\in H_0^{1}(\Omega).  \label{eq:bigradcurldecouple6}
\end{align}
Here $\langle\cdot, \cdot\rangle $ is the dual pair between $\bs H^{-1}(\div, \Omega)$ and $\bs H_0(\curl, \Omega)$.
Since $\bs w, \bs u\in\bs H_0(\curl, \Omega)\cap\bs H(\div, \Omega)$, we have $\bs w, \bs u\in\bs H^1(\Omega;\mathbb R^3)$ \cite[Section I.3.4]{GiraultRaviart1986} and
\[
\|\bs w\|_1\lesssim \|\curl\bs w\|_0\lesssim \|\bs f \|_{-1},
\]
\begin{equation}\label{eq:20200815-1}
\|\bs u\|_1\lesssim \|\curl\bs u\|_0\lesssim \|\bs\phi \|_{0}.
\end{equation}
By the regularity of the Stokes problem \eqref{eq:bigradcurldecouple3}-\eqref{eq:bigradcurldecouple4} \cite[Remark I.5.6]{GiraultRaviart1986},  we have
\begin{equation}\label{eq:20200815-2}
\|\bs\phi\|_2\lesssim \|\curl\bs w\|_0\lesssim \|\bs f \|_{-1}.
\end{equation}
Finally we conclude \eqref{eq:appendixregularity} from \eqref{eq:20200815-1}-\eqref{eq:20200815-2} and the fact $\bs\phi=\curl\bs u$.
\end{proof}

\bibliographystyle{abbrv}
\bibliography{./ncfemstokescomplex3d}
\end{document}